\documentclass{article}
\usepackage{graphicx} 
\usepackage{url}
\usepackage{amsmath, amssymb, amsthm}
\usepackage{tikz}
\usepackage{mathdots}
\usepackage{mathtools}
\usepackage{fancybox}
\usepackage{enumitem}
\usetikzlibrary{decorations.pathreplacing,calligraphy}

\newtheorem{theorem}{Theorem}
\newtheorem{definition}[theorem]{Definition}
\newtheorem{lemma}[theorem]{Lemma}
\newtheorem{cor}[theorem]{Corollary}
\newtheorem{conjecture}[theorem]{Conjecture}
\newtheorem{quest}[theorem]{Question}
\newtheorem{prop}[theorem]{Proposition}
\theoremstyle{remark}
\newtheorem{example}[theorem]{Example}
\newtheorem{remark}[theorem]{Remark}
\newtheorem*{remark*}{Remark}

\newcommand{\fqp}{\frac{q}{p}}

\newcommand{\calP}{\mathcal{P}}

\newcommand{\mk}{m^{(k)}}

\definecolor{light-blue}{HTML}{e9f1f8}
\definecolor{grayish}{RGB}{220,220,220}
\definecolor{lavender}{rgb}{0.5,0,1.0}

\usepackage{hyperref}
\hypersetup{
    colorlinks=true,
    linkcolor=lavender,
    filecolor=blue,      
    urlcolor=teal,
    citecolor=teal
}

\usepackage{xcolor}

\definecolor{Dgreen}{RGB}{2,100,64}

\title{Orderings of Generalized $k$-Markov Numbers}
\author{Esther Banaian and Min Huang}
\date{December 2025}

\begin{document}

\maketitle

\begin{abstract}
A $k$-Markov number is a positive integer that appears in a positive integral solution to the Diophantine equation $x^2 + y^2 + z^2 + k(xy + xz + yz) = (3+3k)xyz$. This equation was introduced by Gyoda and Matsushita. When $k =0$, this definition recovers that of ordinary Markov numbers. The set of $k$-Markov numbers can be indexed by pairs of coprime positive integers. There is a consistent way to label non-coprime pairs with positive integers as well, yielding a larger set of ``generalized $k$-Markov numbers.''

In this paper, we classify lines along which the generalized $k$-Markov numbers grow monotonically, extending work in the ordinary case by Lee-Li-Rabideau-Schiffler and by the second author. We find that, as $k$ grows, the $k$-Markov numbers are more likely to be monotonic along a random line. This gives evidence that a $k$-version of Frobenius' uniqueness conjecture, which has been proposed by Gyoda and Maruyama, could be true.  
\end{abstract}

\section{Introduction}

A \emph{Markov number‌} (or Markoff number) is a positive integer that appears in a solution to the ‌\emph{Markov equation‌}
\[x^2+y^2+z^2=3xyz,\]
first investigated by Markoff in his seminal works of 1879 and 1880. A triple which satisfies the Markov equation is called a \emph{Markov triple}. As a classical subject in number theory, Markov numbers exhibit deep connections to a diverse array of mathematical disciplines, including combinatorics, hyperbolic geometry, Diophantine approximation theory, and cluster algebras.

A driving force in the study of Markov numbers is the uniqueness conjecture, posed by Frobenius in 1913, which states that every Markov number is the largest number in a unique Markov triple. The conjecture remains open today, although interesting solutions to partial results have been plentiful; see \cite{aigner2013Markov,Reutenauer}.

There is a common method for indexing Markov numbers with the numbers in $\mathbb{Q} \cap [0,1]$. One way in which this can be described is by comparing a tree structure associated to each set. First, Markov triples can be ordered using \emph{Vieta jumps}. It is straightforward to verify that, if $(x,y,z)$ is a Markov triple, then so is \[
(x,\frac{x^2 + y^2}{z},y) = (x,3xy-z,y).
\]

Using such replacements, Markov triples populate a nearly 3-regular tree with root $(1,1,1)$. It can be showed that every Markov triple, therefore every Markov number, appears on this tree \cite{markoff1879formes}.

\begin{center}
\begin{tikzpicture}
\node[] at (-1,0){$(1,1,1)$};
\node[] at (1,0){$(1,2,1)$};
\node[] at (3,0){$(1,5,2)$};
\node[] at (5,1){$(1,13,5)$};
\node[] at (5,-1){$(5,29,2)$};
\node[] at (7.5,1.5){$(1,34,13)$};
\node[] at (7.5,0.5){$(13,194,5)$};
\node[] at (7.5,-0.5){$(29,169,2)$};
\node[] at (7.5,-1.5){$(5,433,29)$};
\draw (0.7-1,0) -- (1.3-1,0);
\draw(2.7-1,0) -- (3.3-1,0);
\draw (4.2-.5,0.2) -- (5.2-1,0.8);
\draw(4.2-.5,-0.2) -- (5.2-1,-0.8);
\draw(5.8,1.25) -- (6.5,1.5);
\draw(5.8,0.75) -- (6.5,0.5);
\draw(5.8,-0.75) -- (6.5,-0.5);
\draw(5.8,-1.25) -- (6.5, -1.5);
\draw(8.5,1.7) -- (9,1.9);
\draw(8.5,1.3) -- (9,1.1);
\draw (8.5,0.7) -- (9,0.9);
\draw(8.5,0.3) -- (9,0.1);
\draw(8.5,-0.3) -- (9,-0.1);
\draw(8.5,-0.7) -- (9,-0.9);
\draw(8.5,-1.3) -- (9,-1.1);
\draw(8.5,-1.7) -- (9,-1.9);
\end{tikzpicture}
\end{center}

The rational numbers in $(\mathbb{Q} \cap [0,1]) \cup \{\frac{-1}{1},\frac10\}$ form an isomorphic tree with root $(\frac{-1}{1},\frac{0}{1},\frac{1}{0})$ with edges determined by using the Farey sum $\frac{q}{p} \oplus \frac{t}{s} := \frac{q+t}{p+s}$. We call the triples in this tree \emph{Farey triples}.

\begin{center}
\begin{tikzpicture}
\node[] at (0-1,0){$(\frac{-1}{1},\frac{0}{1},\frac{1}{0})$};
\node[] at (2-1,0){$(\frac01,  \frac11, \frac{1}{0})$};
\node[] at (4-1,0){$(\frac01,  \frac12, \frac{1}{1})$};
\node[] at (5.5,1){$(\frac01, \frac13, \frac12)$};
\node[] at (5.5,-1){$(\frac12, \frac23, \frac11)$};
\node[] at (7.5,1.5){$(\frac01,\frac14,\frac13)$};
\node[] at (7.5,0.5){$(\frac12, \frac13,\frac25)$};
\node[] at (7.5,-0.5){$(\frac11, \frac34, \frac23)$};
\node[] at (7.5,-1.5){$(\frac12, \frac35, \frac23)$};
\draw (0.9-1,0) -- (1.3-1,0);
\draw(2.8-1,0) -- (3.2-1,0);
\draw (4.2-.5,0.2) -- (4.65,1);
\draw(4.2-.5,-0.2) -- (4.65,-1);
\draw(6.25,1.1) -- (6.75,1.5);
\draw(6.25,0.9) -- (6.75,0.5);
\draw(6.25,-0.9) -- (6.75,-0.5);
\draw(6.25,-1.1) -- (6.75, -1.5);
\draw(8.3,1.7) -- (9,1.9);
\draw(8.3,1.3) -- (9,1.1);
\draw (8.3,0.7) -- (9,0.9);
\draw(8.3,0.3) -- (9,0.1);
\draw(8.3,-0.3) -- (9,-0.1);
\draw(8.3,-0.7) -- (9,-0.9);
\draw(8.3,-1.3) -- (9,-1.1);
\draw(8.3,-1.7) -- (9,-1.9);
\end{tikzpicture}
\end{center}

By identifying Markov and Farey triples via their central elements, one defines the Markov number $m_{\frac{q}{p}}$ by comparing the two vertices related by the canonical isomorphism between these trees. 

The Uniqueness Conjecture would imply that Markov numbers induce a total ordering on $\mathbb{Q} \cap [0,1]$ via the fractional labeling, and indeed these two statements are equivalent. Using this perspective, Aigner gave three weaker conjectures regarding Markov numbers: the constant sum conjecture, the constant numerator conjecture, and the constant denominator conjecture \cite{aigner2013Markov}. For example, the constant sum conjecture states that, if $\gcd(p,q) = 1$ and $0<i<q$ is such that $\gcd(p+i,q-i) = 1$, then $m_{\frac{q}{p}} < m_{\frac{q-i}{p+i}}$.

Using different tools, several different teams have recently shown Aigner's conjectures \cite{LPTV,LLRS,mcshane2021convexity}. In \cite{LLRS}, the authors gave a wider family of relations amongst Markov numbers as follows. Let $(p,q)$ and $(s,t)$ be two pairs of coprime positive integers with $p>q$ and $s>t$. Assume as well that $p < s$. Let $\ell: ax + b$ be the line connecting these two points. The main result of \cite{LLRS} says that, if $a \geq -\frac87$, then $m_{\frac{q}{p}} < m_{\frac{t}{s}}$ and if $a \leq -\frac54$, then $m_{\frac{q}{p}} > m_{\frac{t}{s}}$. The authors conjecture more precise behavior by positing tighter bounds on these regions of increase and decrease as well as the behavior which occurs between. 

These conjectures were settled by the second author in \cite{HuangMonotonicity} as well as by Gaster in \cite{Gaster}. Indeed, both \cite{LPTV} and \cite{LLRS} demonstrated Aigner's conjectures remain true in a larger context by associating a positive integer $m_{(p,q)}$ to each point $(p,q)$ with $0 \leq p \leq q$ in such a way that, if $\gcd(p,q) = 1$, then $m_{(p,q)} = m_\fqp$. The proof in \cite{HuangMonotonicity} works in this wider generality.

The numbers $m_{(p,q)}$ for non-coprime integers $p$ and $q$, sometimes termed ``generalized Markov numbers,'' can be defined using \emph{snake graphs}. A snake graph is a combinatorial tool that was developed to study cluster algebras from surfaces \cite{MusikerSchiffler1,propp2005combinatorics}. It is well-known that Markov numbers can be viewed as specializations of cluster variables in the cluster algebra arising from a once-punctured torus \cite{beineke2011cluster,propp2005combinatorics}. In this correspondence, cluster variables are associated to \emph{arcs} on the once-punctured torus, i.e. non-self-intersecting curves with both endpoints on the puncture. One can associate a snake graph $\mathcal{G}_{\fqp}$ with each Markov number $m_{\fqp}$ in such a way that $\mathcal{G}_{\fqp}$ has $m_{\fqp}$ perfect matchings. The wider family of numbers $m_{(p,q)}$ comes from building snake graphs from a wider set of curves on the once-punctured torus. Indeed, this perspective has proven to be useful in studying orderings of Markov numbers \cite{rabideau2020continued,LLRS}.

Viewing Markov numbers through the lens of cluster algebras suggests natural generalizations. Given a nonnegative integer $k$, Gyoda and Matushita introduced the $k$-Markov equation in \cite{GyodaMatsushita}, \begin{equation}
x^2 + y^2 + z^2 + k(xy + xz + yz) = (3+3k)xyz.
\end{equation}

Prior to this definition, the case for $k = 1$ was studied by the first author and Sen \cite{banaian2024generalization} and Gyoda \cite{GyodaOriginal}. In this setting, a Vieta jump replaces a triple $(x,y,z)$ with, for example, $(x,\frac{x^2 + kxy + y^2}{z},y)$. The form of these replacements resembles mutation in Chekhov and Shapiro's \emph{generalized cluster algebras} \cite{chekhov2014teichmuller}. Therefore, we can study $k$-Markov numbers using the snake graphs for generalized cluster algebras developed by the first author and Kelley in \cite{banaian2020snake}.

The same fractional labeling works for $k$-Markov numbers for any $k$. We accordingly denote a $k$-Markov number by $m^{(k)}_{\fqp}$ and a generalized $k$-Markov number by $m^{(k)}_{(p,q)}$. In Section~\ref{sec:Definition}, we will redefine these quantities in a concrete way. 

The first author recently showed that the $k$-Markov numbers also satisfy Aigner's conjectures. Here, we extend the work and exhibit all lines along with $k$-Markov numbers monotonically increase or decrease. This is the $k$-Markov analogue of the work in \cite{HuangMonotonicity}. Our main result is the following.

\begin{theorem}(Theorem~\ref{thm:Main})\label{thm:Main-intro}
Let $\ell:y=ax+b$ be a line with rational slope and intercept. Let $\alpha = 4k^2 + 12k + 5$, $\beta= 3k^2 + 8k + 6$ and $\delta= 3k^4 + 17k^3 + 34 k^2 + 28k + 8$  and let $A= \frac{2k^2 + 3k + 1 + (1+k)\sqrt{\alpha}}{2(1+2k)\sqrt{\alpha}}$, $B=\frac{(k+1)(k+2)(\beta^2-4)+\delta\sqrt{\beta^2-4}}{(\beta-2)(\beta^2-4)}$. Let $ \mathcal R=\{(p,q)\in \mathbb Z_{\geq0}^2\mid p\geq q\}$. Then the following hold:
\begin{enumerate}
    \item[$(1)$] If $a \geq -\frac{\ln(3+3k)B}{\ln\frac{\beta+\sqrt{\beta^2-4}}{2(3+3k)B}}$,
    then the generalized $k$-Markov numbers are strictly increasing as functions of $x$ along $\ell\cap \mathcal R$. 

    \item[$(2)$] If $a\leq -\frac{\ln\frac{3+2k+\sqrt{\alpha}}{2}}{\ln(3+3k)A}$,
    then the generalized $k$-Markov numbers are strictly decreasing as functions of $x$ along $\ell\cap \mathcal R$. 

    \item[$(3)$] If $-\frac{\ln\frac{3+2k+\sqrt{\alpha}}{2}}{\ln(3+3k)A}< a<-\frac{\ln(3+3k)B}{\ln\frac{\beta+\sqrt{\beta^2-4}}{2(3+3k)B}}$, then for any 
    $b\in \mathbb Q$ such that $|\ell\cap \mathcal R|>2$,
    the generalized $k$-Markov numbers are neither monotone increasing nor monotone decreasing in $x$ along $\ell\cap \mathcal R$. More precisely, they first decrease and then increase as $x$ grows.
\end{enumerate}
\end{theorem}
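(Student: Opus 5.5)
I will follow the strategy of \cite{HuangMonotonicity}, adapted to the generalized cluster algebra setting, and split the argument into three parts: an explicit formula for $m^{(k)}_{(p,q)}$, two-sided growth estimates, and a comparison of consecutive lattice points on $\ell$.

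\textbf{Step 1: matrix formula.} First I will realize $m^{(k)}_{(p,q)}$ as a matrix-product quantity. Using the snake graphs of \cite{banaian2020snake}, a Christoffel-type word in two letters gives two $3\times 3$ (or $2\times 2$) integer matrices $M_a,M_b$ with $m^{(k)}_{(p,q)} = \frac{1}{3+3k}\operatorname{tr}(W_{(p,q)})$, up to a fixed normalization. Here $W_{(p,q)}$ is the product along the Christoffel path of slope $q/p$. For non-coprime pairs $(p,q)=d(p',q')$, I expect $W_{(p,q)}$ to be a $d$-th power $W_{(p',q')}^d$, so that $m^{(k)}_{(p,q)}$ becomes a Chebyshev-type polynomial in the primitive trace.

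\textbf{Step 2: growth estimates.} From this formula I will establish estimates of the shape
\[
C_1\, \lambda_1^{\,q}\,\mu^{\,p-q} \;\le\; m^{(k)}_{(p,q)} \;\le\; C_2\, \lambda_2^{\,q}\,\mu^{\,p-q}.
\]
The constant $\mu=\frac{3+2k+\sqrt{\alpha}}{2}$ is the dominant eigenvalue along the line $q=0$, where the numbers satisfy a linear recurrence with characteristic root $\mu$. The constant $\frac{\beta+\sqrt{\beta^2-4}}{2}$ is the eigenvalue along the diagonal direction $p=q$. The constants $A$ and $B$ should come out of the sharpest comparisons between neighbouring slopes. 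Concretely, I need lower and upper bounds for the ratio $m^{(k)}_{(p+1,q)}/m^{(k)}_{(p,q)}$ and for the ratio $m^{(k)}_{(p+1,q+1)}/m^{(k)}_{(p,q)}$, uniform in $(p,q)\in\mathcal R$. I would prove these by induction on the word length, using the Vieta-jump / Ptolemy relation $m_{\text{new}}m_{\text{old}}=m_1^2+k m_1m_2+m_2^2$ and its non-coprime analogues (skein relations for generalized arcs). The factor $(3+3k)$ enters through the relation $x^2+\dots=(3+3k)xyz$. The bounds will be attained asymptotically near the two boundary rays $q=0$ and $q=p$.

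\textbf{Step 3: comparing points on $\ell$.} Let $(p,q)$ and $(p+s,q+t)$ be consecutive lattice points on $\ell$, with $t/s=a$. Telescoping the ratio bounds gives
\[
\frac{m^{(k)}_{(p+s,q+t)}}{m^{(k)}_{(p,q)}} \ge \bigl(\text{lower ratio}\bigr)^{s+t}\cdot\bigl(\text{diag ratio}\bigr)^{-t}.
\]
After taking logarithms, this is $>1$ exactly when $a$ exceeds the threshold in (1); the symmetric upper estimate gives (2).

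For (3), I will show that $m^{(k)}$ restricted to $\ell\cap\mathcal R$ is log-convex in $x$. Log-convexity follows from the convexity of the exponent in the two-sided estimate, or more cleanly from a Ptolemy-type inequality $m_{P}m_{R}\ge m_Q^2$ for three collinear points $P,Q,R$. In the intermediate range of slopes, the first step from the boundary $q=0$ (or its endpoint) decreases and the last step near the diagonal increases. Together with convexity, this forces the sequence to decrease and then increase.

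\textbf{Main obstacle.} I expect the hardest part to be Step 2: proving the ratio bounds uniformly with exactly the sharp constants $A$ and $B$. The $k$-deformed recurrence makes the eigenvalue analysis messier, and the bounds must also hold at non-coprime points. I would first try to reduce to the two extremal families, the rays $q=0$ and $q=p$ and their immediate neighbours, via a monotonicity-in-slope lemma. On those families I would then compute explicitly, with closed forms built from $\sqrt{\alpha}$ and $\sqrt{\beta^2-4}$, to extract $A$ and $B$.
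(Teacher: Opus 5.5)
\textbf{Verdict: genuine gap.} The overall plan can be salvaged, but the step that carries all of its content is missing. That step is Step~2. You leave it open, and the heuristics you give for it point at the wrong constants.

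\textbf{Step 2 is the gap.} Your skeleton (uniform one-step ratio bounds, then telescoping) can be made sharp. It works only on the open region $p>q>0$ used in Section~\ref{sec:MainResult}, and only with four specific extremal values. Write $\mu_1=\frac{3+2k+\sqrt{\alpha}}{2}$, $\mu_2=\frac{\beta+\sqrt{\beta^2-4}}{2}$ and $d(p,q)=m^{(k)}_{(p+1,q+1)}/m^{(k)}_{(p,q)}$.
For (1) you need $h>(3+3k)B$ and $d<\mu_2$. For (2) you need $h<\mu_1$ and $d>(3+3k)A\mu_1$. Only then does telescoping return $((3+3k)B)^{a_1+a_2}\mu_2^{-a_1}=S(a_1,a_2)_+$ and $\mu_1^{a_1+a_2}((3+3k)A\mu_1)^{-a_1}=S(a_1,a_2)_-$.

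These extremes are approached off the boundary rays, not on them, so your heuristic "attained near the rays $q=0$ and $q=p$" gives the wrong values. Take $k=0$:
\begin{itemize}
\item On the diagonal, $h(q,q)=m^{(0)}_{(q+1,q)}/m^{(0)}_{(q,q)}\to 1+\sqrt2$. Telescoping with this value and $d\to 3+2\sqrt2$ certifies at best slopes $>-1$.
\item One step off the diagonal, $h(q+1,q)=\frac{13}{5},\frac{75}{29},\frac{433}{169},\frac{2523}{985},\dots$ decreases to $3B\approx 2.5607$. This is the value that yields $U(0)$.
\item On the closed region of the displayed statement, (2) fails outright. For $k=0$, the line of slope $-\frac54$ through $(18,5)$ and $(22,0)$ has $m^{(0)}_{(18,5)}=537169541<701408733=m^{(0)}_{(22,0)}$.
\end{itemize}

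The value $(3+3k)B$ is a limit along the non-coprime points $(2j+2,2j)$. There Proposition~\ref{prop:RecurrenceAlongy=q/px} gives $m^{(k)}_{(2j+2,2j)}=((3+3k)g_j-k)g_j$. That recurrence, not the Markov equation, is where $(3+3k)$ enters. The constants $A$ and $B$ are simply the leading coefficients from Proposition~\ref{prop:kMarkovFibAndPell}.

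You also need the one-step ratios to stay strictly on one side of these limits at every point. That requires strict comparisons such as $h(q+1,q)>h(q+2,q+1)$ and $d(p,q)>d(p+1,q)$. Each follows from Proposition~\ref{prop:ptolemy} applied to $O$, to $(1,0)$ or $(1,1)$, and to two nearby points. Nothing in your Steps~1--2 supplies these comparisons. A global estimate $C_1\lambda_1^q\mu^{p-q}\le m\le C_2\lambda_2^q\mu^{p-q}$ in particular cannot: any slack in it leaves undecided whether the ratio of two points a bounded distance apart exceeds $1$ once that ratio is close to $1$.

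\textbf{How the paper does it.} In the negative-slope case the paper avoids one-step bounds entirely.
\begin{itemize}
\item Log-convexity (Proposition~\ref{prop:negative_slope}) reduces everything to the first and last ratios on $\ell$.
\item Lemma~\ref{lem:ratio_comparison} shows these ratios are strictly monotone when $\ell$ is translated along $(1,1)$, respectively $(1,0)$. It also squeezes them against the lines through $(N+1,N)$, respectively $(N,1)$.
\item Hence every line of slope $-a_1/a_2$ has $\operatorname{Ratio}_{\mathrm{first}}>S(a_1,a_2)_+$ and $\operatorname{Ratio}_{\mathrm{last}}<S(a_1,a_2)_-$, and these limits are computed exactly.
\end{itemize}

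\textbf{Part (3).} Your argument here has its own problems.
\begin{itemize}
\item With points ordered by $x$, the first step is the one next to the diagonal. You need its ratio to be $<1$, and the ratio of the last step, next to $q=0$, to be $>1$. You state the reverse, which contradicts the log-convexity you invoke.
\item Telescoped bounds only give $\operatorname{Ratio}_i>S_+$ and $\operatorname{Ratio}_i<S_-$. These decide nothing when $S_+<1<S_-$.
\end{itemize}

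\textbf{Step 1.} The trace normalization fails at non-coprime points. For $k=0$, $\operatorname{tr}W_{(1,1)}=3m_{(1,1)}=6$ gives $\frac13\operatorname{tr}(W_{(1,1)}^2)=\frac{34}{3}$, whereas $m_{(2,2)}=12$. The generalized numbers obey the $U$-type recurrence of Proposition~\ref{prop:RecurrenceAlongy=q/px}, not traces of powers.
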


Notably, the ``gray area'' between the slopes of increase and decrease, as described in Theorem~\ref{thm:Main-intro} (3), becomes arbitrarily small as $k$ increases. Consequently, in the limit, a $k$-version of the Uniqueness Conjecture holds true.

The article is structured as follows. Section~\ref{sec:Preliminaries} contains the necessary prerequisites, including our poset construction and key definitions. In Section~\ref{sec:Ptolemy}, we expand the generalized Ptolemy relation from \cite{banaian2025} to allow some collinear points. This key result is used frequently throughout. We further prepare for the main results in  Section~\ref{sec:Sequences} by studying several sequences of generalized $k$-Markov numbers, including $k$-analogues of Fibonacci numbers and Pell numbers. Our main result is given in Section~\ref{sec:MainResult}, which then concludes with examples and questions for future work.

\section*{Acknowledgements}
This work was partially supported by the German Research Foundation SFB-TRR 358/1 2023 – 491392403 (E.B.) and the National Natural Science Foundation
of China (No.12471023) (M.H.). 

\section{Preliminaries}\label{sec:Preliminaries}

\subsection{Fence Posets}

Throughout the paper, we fix a nonnegative integer $k$. 

Let $\mathcal{L}$ be the lattice in $\mathbb{Z}^2$ consisting of all lines of slope $0,-1,$ and $\infty$ which pass through integral points. We will regard each line as a sequence of line segments between consecutive integral points. 

A \emph{fence poset} is one whose Hasse diagram is a path graph. We will associate one fence poset $\calP_{(p,q)}$ to each point $(p,q) \in \mathbb{Z}^2$. In the next section, we will use these posets to associate a positive integer to each integral point.

First, we will assign a fence poset to any curve in $\mathbb{R}^2$ based on its intersections with $\mathcal{L}$. We will always consider curves to have an orientation.

\begin{definition}\label{def:PosetPpq}

Let $\gamma$ be a curve in $\mathbb{R}^2$. The following process will define a fence poset $(\calP_{\gamma},\succeq)$. 
For clarity, in the construction we will label elements of $\calP_{\gamma}$ by $(\tau,i)$ where $\tau$ is a line segment in $\mathcal{L}$ which $\gamma$ crosses and $0 \leq i \leq k$. 

\begin{enumerate}[label = (\arabic*)]
    \item For every intersection of $\gamma$ with a line segment in $\mathcal{L}$, we will have a chain of $k+1$ elements. If this intersection point lies strictly closer to the endpoint of $\tau$ which lies to the right of $\gamma$, then the chain is of the form $(\tau,0) \succ (\tau,1) \succ \cdots \succ (\tau,k)$. Otherwise, the chain is of the form $(\tau,0) \prec (\tau,1) \prec \cdots \prec (\tau,k)$.
    \item  Suppose $\gamma$ crosses line segments $\tau$ and $\tau'$ consecutively in this order. If the common endpoint of $\tau$ and $\tau'$ lies to the right of $\gamma$, then we introduce the relation $(\tau,k) \succ (\tau',0)$. Otherwise, the common endpoint of $\tau$ and $\tau'$ lies to the left of $\gamma$ and we introduce the relation $(\tau,k) \prec (\tau',0)$
\end{enumerate}

In particular, if $\gamma$ does not intersect any line segments in $\mathcal{L}$, then $\calP_{\gamma} = \emptyset$. 
\end{definition}

We will be interested in the posets associated to a family of arcs, one for each pair of points in $\mathbb{Z}^2$, and we will be particularly interested in those where one of the points is the origin.

If $\gcd(\vert p\vert,\vert q \vert) = 1$, then, the line segment with endpoints $(0,0)$ and $(p,q)$ does not pass through any other integral points. Call this curve $\gamma_{(p,q)}$ and orient it from $(0,0)$ to $(p,q)$. In general, we will sometimes use $\fqp$ in place of $(p,q)$ when $p$ and $q$ are coprime. So, here we may also write $\gamma_\fqp$.

\begin{remark}\label{rem:LeftBias}
The curve $\gamma_{(p,q)}$ may intersect some line segments in $\mathcal{L}$ at exactly their midpoint. Notice that Definition~\ref{def:PosetPpq} treats these as if they are left intersections. We summarize this property by calling the definition ``left-biased.''  
Indeed, we could equivalently use a ``right-based'' definition.
\end{remark}

Now, given a general point $(p,q) \in \mathbb{Z}^2$, let $g = \gcd(\vert p\vert,\vert q\vert)$. Let $p' = \frac{p}{g}$ and $q' = \frac{q}{g}$. The line segment connecting $(0,0)$ and $(p,q)$ can be broken up into $g$ translates of $\gamma_{(p',q')}$. Let $\gamma^L_{(p,q)}$ be the result of taking each such line segment and deforming each intersection with a point of the form $(\frac{n}{2},\frac{m}{2})$ for $n,m \in \mathbb{Z}$ slightly to the left with respect to the orientation on the curve which starts at $(0,0)$. (See Example~\ref{ex:P_{4,2}}.) Define $\gamma^R_{(p,q)}$ similarly but with respect to rightward perturbations. Notice that these perturbations mean that $\gamma_{(p,q)}^L$ and $\gamma_{(p,q)}^R$ intersect more line segments than the line segment between $(0,0)$ and $(p,q)$ and these curves never crosses an line segment from $\mathcal{L}$ at exactly its midpoint. 
For shorthand, we set $\calP_{(p,q)} := \calP_{\gamma_{(p,q)}^L}$; we will later see that our preference for $\gamma_{(p,q)}^L$ has no lasting effect. We will also set $h_{(p,q)} = \vert \calP_{(p,q)} \vert$. 

\begin{example}\label{ex:P_{3,2}}
Here, we illustrate the construction of $\calP_{(3,2)}$ when $k= 1$. First, consider the $\gamma_{(3,2)}$.
\begin{center}
\begin{tikzpicture}[scale=2]
\draw (0,0) -- (3,0) -- (3,2) -- (0,2) -- (0,0);
\draw (1,0) -- (1,2);
\draw (2,0) -- (2,2);
\draw(3,0) -- (3,2);
\draw(0,1) -- (3,1);
\draw (0,1) -- (1,0);
\draw (0,2) -- (2,0);
\draw(1,2) -- (3,0);
\draw(2,2) -- (3,1);
\draw[thick, red,->] (0,0) -- (1.2,0.8);
\draw[thick, red,] (3,2) -- (1.2,0.8);
\node[right] at (0.4,0.6){$\tau_1$};
\node[right] at (1,0.4){$\tau_2$};
\node[right] at (1.5,0.5){$\tau_3$};
\node[above] at (1.25,1){$\tau_4$};
\node[right] at (1.5,1.5){$\tau_5$};
\node[left] at (2,1.7){$\tau_6$};
\node[right] at (2.6,1.4){$\tau_7$};
\end{tikzpicture}
\end{center}

Condition 1 of Definition~\ref{def:PosetPpq} tells us, for example, that $\calP_{(3,2)}$ has the chain $(\tau_1,0) \succ (\tau_1,1)$ since the intersection point of $\gamma_{\frac23}$ and $\tau_1$ occurs closer to $(1,0)$ than $(0,1)$, and $(1,0)$ lies to the right of $\gamma_{\frac23}$. We have $(\tau_2,0) \prec (\tau_2,1)$ since the intersection point of $\gamma_{\frac23}$ and $\tau_2$ is closer to $(1,1)$, which lies to the left of $\gamma_{\frac23}$. For similar reasons $(\tau_3,0) \prec (\tau_3,1)$. The intersection point of $\gamma_{\frac23}$ and $\tau_4$ lies at the midpoint of $\tau_4$. Since this is not strictly closer to the right endpoint, we treat this like a ``left intersection.''

Condition 2 of Definition~\ref{def:PosetPpq} tells us how to connect these chains. Since the common endpoint of $\tau_1$ and $\tau_2$ lies to the right of $\gamma_{\frac23}$, we have $(\tau_1,1) \succ (\tau_2,0)$. The common endpoint of $\tau_2$ and $\tau_3$ lies to the left of $\gamma_{\frac23}$, so we have $(\tau_2,1) \prec (\tau_3,0)$.

Proceeding as such, we see that $\calP_{(3,2)} = \calP_{\frac23}$ is the following fence poset. 

\begin{center}
\begin{tikzpicture}[scale=0.8]
\node(10) at (0,0){$(\tau_1,0)$};
\node(11) at (1,-1){$(\tau_1,1)$};
\node(20) at (2,-2){$(\tau_2,0)$};
\node(21) at (3,-1){$(\tau_2,1)$};
\node(30) at (4,0){$(\tau_3,0)$};
\node(31) at (5,1){$(\tau_3,1)$};
\node(40) at (6,2){$(\tau_4,0)$};
\node(41) at (7,3){$(\tau_4,1)$};
\node(50) at (8,2){$(\tau_5,0)$};
\node(51) at (9,1){$(\tau_5,1)$};
\node(60) at (10,0){$(\tau_6,0)$};
\node(61) at (11,-1){$(\tau_6,1)$};
\node(70) at (12,0){$(\tau_7,0)$};
\node(71) at (13,1){$(\tau_7,1)$};
\draw(10) -- (11);
\draw(11) -- (20);
\draw(20) -- (21);
\draw(21) -- (30);
\draw(30) -- (31);
\draw(31) -- (40);
\draw(40) -- (41);
\draw(41) -- (50);
\draw(50) -- (51);
\draw(51) -- (60);
\draw(60) -- (61);
\draw(61) -- (70);
\draw(70) -- (71);
\end{tikzpicture}
\end{center}

Indeed, one can see that if we treated the intersection at $\tau_4$ as a ``right intersection'', we would get an isomorphic poset. 
\end{example}

\begin{example}\label{ex:P_{4,2}}
Below, we drawn $\gamma_{(4,2)}^L$.  
\begin{center}
\begin{tikzpicture}[scale=2]
\draw (0,0) -- (4,0) -- (4,2) -- (0,2) -- (0,0);
\draw (1,0) -- (1,2);
\draw (2,0) -- (2,2);
\draw(3,0) -- (3,2);
\draw(0,1) -- (4,1);
\draw (0,1) -- (1,0);
\draw (0,2) -- (2,0);
\draw(1,2) -- (3,0);
\draw(2,2) -- (4,0);
\draw(3,2) -- (4,1);
\draw[thick, red] (0,0) -- (0.9,0.45);
\draw[thick, red] (0.9,0.45) to [out = 60, in = 150,looseness=1.5] (1.1,0.55);
\draw[thick,red] (1.1,0.55)-- (1.85,0.9);
\draw[thick, red] (1.85,0.9) to [out = 75, in = 165,looseness=1.5] (2.15,1.1);
\draw[thick,red] (2.15,1.1) -- (2.9,1.45);
\draw[thick, red] (2.9,1.45) to [out = 60, in = 150,looseness=1.5] (3.1,1.55);
\draw[thick,red] (3.1,1.55) -- (4,2);
\node[right] at (0.4,0.6){$\tau_1$};
\node[right] at (1,0.4){$\tau_2$};
\node[right] at (1.5,0.5){$\tau_3$};
\node[above] at (1.25,1){$\tau_4$};
\node[right] at (1.5,1.5){$\tau_5$};
\node[left] at (2,1.7){$\tau_6$};
\node[right] at (2.4,1.6){$\tau_7$};
\node[right] at (3,1.3){$\tau_8$};
\node[right] at (3.5,1.5){$\tau_9$};
\end{tikzpicture}
\end{center}

Then, using the process detailed in Definition~\ref{def:PosetPpq}, we build $\calP_{(4,2)}$.

\begin{center}
\begin{tikzpicture}[scale=0.8]
\node(1) at (0,0){$(\tau_1,0)$};
\node(2) at (1,-1){$(\tau_1,1)$};
\node(3) at (2,-2){$(\tau_2,0)$};
\node(4) at (3,-1){$(\tau_2,1)$};
\node(5) at (4,0){$(\tau_3,0)$};
\node(6) at (5,1){$(\tau_3,1)$};
\draw(1) -- (2);
\draw(2) -- (3);
\draw(3) -- (4);
\draw(4) -- (5);
\draw(5) -- (6);
\node(7) at (6,2){$(\tau_4,0)$};
\node(8) at (7,1){$(\tau_4,1)$};
\node(9) at (8,0){$(\tau_5,0)$};
\node(10) at (9,-1){$(\tau_5,1)$};
\node(b) at (10,-2){$(\tau_6,0)$};
\node(c) at (11,-3){$(\tau_6,1)$};
\draw(6) -- (7);
\draw(7) -- (8);
\draw(8) -- (9);
\draw(9) -- (10);
\draw(10) -- (b);
\draw(b) -- (c);
\node(11) at (12,-2){$(\tau_7,0)$};
\node(12) at (13,-3){$(\tau_7,1)$};
\node(13) at (14,-4){$(\tau_8,0)$};
\node(14) at (15,-3){$(\tau_8,1)$};
\node(15) at (16,-2){$(\tau_9,0)$};
\node(16) at (17,-1){$(\tau_9,1)$};
\draw(c)--(11);
\draw(11) -- (12);
\draw(12) -- (13);
\draw(13) -- (14);
\draw(14) -- (15);
\draw(15) -- (16);
\end{tikzpicture}
\end{center}  

Indeed, one can recognize that $\calP_{(4,2)}$ has two subposets which are isomorphic to $\calP_{(2,1)}$, on elements $\{(\tau_i,j):1 \leq i \leq 3, ~0 \leq j \leq 1\}$ and $\{(\tau_i,j):7 \leq i \leq 9,~ 0 \leq j \leq 1\}$. These are connected by a chain which is the result of the perturbation away from the point $(2,1)$. Moreover, one can again see that $\calP_{\gamma_{(4,2)}^R}$ is isomorphic to $\calP_{(4,2)}$. 
\end{example}

We can now easily associate a fence poset to an approximation of a straight line between any two integral points using translation.

\begin{definition}\label{def:PosetBetweenAnyTwoPoints}
Let $A = (p_1,q_1)$ and $B = (p_2,q_2)$ be two points in $\mathbb{Z}$. We define $\calP_{AB}$ to be the poset $\calP_C$ where $C = (p_2-p_1,q_2-q_1)$.
\end{definition}

The following is evident and will be used implicitly.

\begin{lemma}\label{lem:SwapAandB}
There is a poset isomorphism between $\calP_{AB}$ and $\calP_{BA}$ for all $A,B \in \mathbb{Z}^2$.   
\end{lemma}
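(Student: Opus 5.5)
We want to show that $\calP_{AB}$ and $\calP_{BA}$ are isomorphic. By Definition~\ref{def:PosetBetweenAnyTwoPoints}, $\calP_{AB} = \calP_C$ with $C = (p,q) := (p_2-p_1,q_2-q_1)$, and $\calP_{BA} = \calP_{-C}$. So it suffices to show $\calP_{(p,q)} \cong \calP_{(-p,-q)}$ for every $(p,q)\in\mathbb{Z}^2$. The case $(p,q)=(0,0)$ is trivial, since both posets are empty.

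The strategy is to use the point reflection $\rho:(x,y)\mapsto(-x,-y)$. This map preserves the lattice $\mathcal{L}$, since lines of slopes $0,-1,\infty$ through integral points go to lines of the same type, and integral points go to integral points. It also preserves orientation of the plane, so it preserves the notions of "left" and "right" of an oriented curve. Moreover $\rho$ maps $\gamma^L_{(p,q)}$, oriented from $(0,0)$ to $(p,q)$, onto a curve $\gamma'$ from $(0,0)$ to $(-p,-q)$. Because $\rho$ preserves the left side, each small detour of $\gamma^L_{(p,q)}$ around a half-integral point is sent to a leftward detour around the image point. Hence $\gamma' = \gamma^L_{(-p,-q)}$ up to an isotopy that does not change which segments of $\mathcal{L}$ are crossed, the order of crossings, or on which side each crossing lies. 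Definition~\ref{def:PosetPpq} depends only on this data, so $\calP_{(p,q)}\cong \calP_{\gamma'} = \calP_{(-p,-q)}$ via $(\tau,i)\mapsto(\rho(\tau),i)$.

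To verify this carefully, I would check the two conditions of Definition~\ref{def:PosetPpq}:
\begin{enumerate}
\item Each crossing of $\gamma$ with $\tau$ corresponds to a crossing of $\rho(\gamma)$ with $\rho(\tau)$, with the same ratio of distances to the endpoints. Since the endpoint to the right of $\gamma$ goes to the endpoint to the right of $\rho(\gamma)$, the direction of each chain is preserved.
\item Consecutive crossings remain consecutive, and common endpoints keep their side. So the linking relations between chains are preserved.
\end{enumerate}

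The one point needing care is that the midpoint and perturbation conventions are compatible. This is exactly why the point reflection is the right map: reversing orientation along the same segment would swap left and right, but $\rho$ does not. Reversal of orientation is not used at all, since $\calP_{BA}$ is defined via the translate $-C$ from the origin.
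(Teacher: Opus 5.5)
Your proof is correct. The paper gives no argument, simply calling the lemma evident, and the point reflection $(x,y)\mapsto(-x,-y)$ is precisely the symmetry that makes it so: it is an orientation-preserving isometry that preserves $\mathcal{L}$, the half-integral points, and hence the leftward perturbations defining $\gamma^L$. Your observation that no orientation reversal is needed is worth keeping, because it means the left-biased conventions of Definition~\ref{def:PosetPpq} never have to be compared with right-biased ones.
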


\subsection{Definition of Generalized $k$-Markov numbers}\label{sec:Definition}

In this section, we will define a type of distance function on $\mathbb{Z}^2$ using our poset construction. This function is known not to be a metric (see \cite[Remark 3.7]{LLRS}). 

We will be particularly interested in the case where we build a poset $\calP_{(p,q)}$ where $(p,q)$ lie in the following set, 
\begin{equation}
   \mathcal R=\{(p,q)\in \mathbb Z_{>0}^2\mid p>q\}.
\end{equation}

An \emph{order ideal} of a poset $(\calP,\succeq)$ is a subset $I \subseteq \calP$ such that if $x \in I$ and $y \preceq x$, then $y \in I$. That is, it is a ``downwards closed set''. Note that this includes the empty set. 

\begin{definition}\label{def:kMarkovDistance}
Let $A,B \in \mathbb{Z}^2$. 
\begin{itemize}
    \item  We define the \emph{$k$-Markov distance} between $A$ and $B$, $\vert AB \vert_k$, to be the number of order ideals of $\calP_{AB}$.
    \item  If $A = (0,0)$ and $B = (p,q) \in \mathbb{Z}^2$, then we write $m^{(k)}_{(p,q)} = \vert AB \vert_k$. If $(p,q) \in \mathcal{R}$, then we call $m^{(k)}_{(p,q)}$ a \emph{generalized $k$-Markov number}.
    \item If $A = (0,0)$, $B = (p,q) \in \mathcal{R}$, and $\gcd(p,q) = 1$,  then we may write $m^{(k)}_{\fqp} = m^{(k)}_{(p,q)}$. In this case, we call $m^{(k)}_{\fqp}$ a \emph{$k$-Markov number}.
\end{itemize}
\end{definition}

One reason for singling out the case $\gcd(p,q) = 1$ is that these numbers appear as specializations of cluster variables in a generalized cluster algebra \cite{BG}.\footnote{Unfortunately, here the word ``generalized'' is being pulled in two different directions. In our setting, the generalization comes from defining numbers $m^{(k)}_{(p,q)}$ for all points in $(p,q) \in \mathcal{R}$, instead of focusing on the coprime case. The fact that we are working with numbers associated to generalized cluster algebras instead of ordinary cluster algebras is encompassed in the term ``$k$-Markov number.''} We stress that $m^{(0)}_{\fqp}$ is the ordinary Markov number labeled by $\fqp$. We will sometimes instead write $m_{\fqp}$ when referencing the ordinary case.

\begin{remark}
There is no obstruction to constructing a poset $\calP_{(p,q)}$ for $(p,q) \notin \mathcal{R}$ or defining a number $m^{(k)}_{(p,q)}$. Indeed, the numbers $m^{(k)}_{(p,0)}$ and $m^{(k)}_{(p,p)}$ will be studied in Section~\ref{sec:Sequences}. However, working with a larger set will immediately introduce repetition. For example, if $p,q > 0$, then $m^{(k)}_{(p,q)} = m^{(k)}_{(q,p)}$.
\end{remark} 

The terminology of a  distance function is justified by the following. 

\begin{prop}\label{prop:Straight}
Let $A,B \in \mathbb{Z}^2$. If $\gamma$ be any curve with endpoints in $A$ and $B$, then $\vert J(\calP_\gamma) \vert > \vert AB \vert_k$.
\end{prop}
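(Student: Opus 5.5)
The plan is to compare the two fence posets directly and show that, as we pass from the straight approximation $\gamma^L_{AB}$ to an arbitrary curve $\gamma$, the poset can only gain structure that strictly increases the number of order ideals. Two reductions come first. By translation and Lemma~\ref{lem:SwapAandB} we may assume $A=(0,0)$ and $B=(p,q)$. We may also put $\gamma$ in general position with respect to $\mathcal{L}$: it avoids lattice points, crosses segments transversally, and never crosses a segment at its midpoint. A small perturbation of this kind does not change $\calP_\gamma$ up to isomorphism, by the left/right-bias observation in Remark~\ref{rem:LeftBias}. The statement must be read for $\gamma$ not equivalent to $\gamma^L_{(p,q)}$ or $\gamma^R_{(p,q)}$, which give equality, so the strict inequality is what I will aim for in all other cases.

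To count order ideals I will use a transfer-matrix description of $J$ for fence posets. Reading the fence from left to right, each chain $(\tau,0),\dots,(\tau,k)$ together with the cover relation joining it to the next chain contributes a nonnegative integer matrix. The relevant state is how far the order ideal reaches into the current chain, so these are $(k+2)\times(k+2)$ matrices. This gives $|J(\calP_\gamma)|=u^{T}M_{1}\cdots M_{n}v$ with positive vectors $u,v$. The point of this formulation is that we only need to know which matrices occur and in what order.

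The first step is an induction on the number $n$ of crossings of $\gamma$ with $\mathcal{L}$, removing bigons. Suppose $\gamma$ crosses a segment $\tau$ twice in succession, or more generally bounds a bigon with $\mathcal{L}$ containing no lattice point. Then $\gamma$ can be isotoped to remove the two crossings. In the poset, this deletes two consecutive chains that are attached in opposite directions, and a direct count shows that $J$ strictly drops; in matrix terms, $M_\tau M'_\tau$ dominates the identity entrywise with some strict inequality. Iterating, we reach a curve $\gamma_0$ with no such bigons and $|J(\calP_\gamma)|\ge|J(\calP_{\gamma_0})|$, with strict inequality if any move was made.

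The second step deals with a reduced curve $\gamma_0$. It is determined by the sequence of triangles of $\mathcal{L}$ that it visits, equivalently by which side of each lattice point it passes. If this sequence differs from that of $\gamma^L_{(p,q)}$, then $\gamma_0$ must swing around some lattice point $P$ on the ``wrong'' side relative to the segment $AB$, and therefore crosses the fan of edges at $P$ more times than the straight curve does. I would write $\gamma_0$ as a concatenation of a path from $A$ to near $P$ and a path from near $P$ to $B$, and compare it with the modified curve that passes $P$ on the other side. This changes a single block of consecutive matrices, one per edge of the fan at $P$, and I would check that the block for the wrong-side route dominates the block for the correct-side route entrywise. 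Repeating the modification moves $\gamma_0$ monotonically toward $\gamma^L_{(p,q)}$.

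I expect this second step to be the main obstacle: two reduced curves may have comparable numbers of crossings, and one must show that the straight route really minimises $J$. If the entrywise domination of the fan blocks fails in general, my fallback is a geometric induction. I would split $\gamma_0$ at the first lattice point it winds around, and compare the number of order ideals of the resulting concatenated fence with $|AP|_k\cdot|PB|_k$-type lower bounds. The needed inequality would then follow from $|AB|_k<|AP|_k|PB|_k$, itself proved by inserting the extra chain at the junction.
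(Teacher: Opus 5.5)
\textbf{There is a genuine gap.} The paper gives no argument of its own here; it cites \cite{LLRS} for $k=0$ and \cite{banaian2025} for $k>0$. You are right that the strict inequality must exclude $\gamma^L_{AB}$ itself, and the paper only uses the weak form. But your proposal does not prove the actual content of the statement, namely that the straight route minimises $|J|$. Moreover, the local mechanism you propose for step 2 is false.

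For $k=0$, let $U=\begin{pmatrix}1&0\\1&1\end{pmatrix}$ and $D=\begin{pmatrix}1&1\\0&1\end{pmatrix}$ be the transfer matrices of covers $x_i\prec x_{i+1}$ and $x_i\succ x_{i+1}$. Consider the segment from $(-2,1)$ to $(3,-1)$.
\begin{itemize}
\item It passes the origin crossing the three incident segments towards $(-1,1)$, $(0,1)$, $(1,0)$.
\item The curve that agrees with it except that it goes around the origin on the other side crosses the segments towards $(-1,0)$, $(0,-1)$, $(1,-1)$ instead.
\item So the ``wrong side'' does not cross more fan edges.
\item The two blocks, including the entry and exit covers (which also flip), are $UD^2U=\begin{pmatrix}3&2\\4&3\end{pmatrix}$ and $DU^2D=\begin{pmatrix}3&4\\2&3\end{pmatrix}$. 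These are incomparable entrywise.
\end{itemize}
With the actual outer vectors $(1,1)$ and $(2,3)^T$ the totals are $29$ (straight) and $31$. Had the right-hand vector been $(3,2)^T=DU(1,1)^T$, the comparison would reverse. So no local swap at a lattice point can decide the inequality. Some global comparison is needed, for instance a non-surjective injection $J(\calP_{AB})\hookrightarrow J(\calP_\gamma)$.

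Your fallback does not supply one. The inequality $|AB|_k<|AP|_k|PB|_k$ it relies on is false. Take $A=(0,0)$, $P=(1,0)$, $B=(2,1)$: then $|AB|_k=m^{(k)}_{\frac12}=2k^2+6k+5$, while $|AP|_k\,|PB|_k=1\cdot(k+2)$.

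There is a second gap for $k\ge 1$. There $\calP_\gamma$ is not determined by the sequence of triangles $\gamma$ visits. By Definition~\ref{def:PosetPpq}(1), the direction of each chain depends on which half of the segment $\gamma$ crosses. So a curve with the same triangle sequence as $\gamma^L_{AB}$, but crossing some segment on the other side of its midpoint, has a different poset. For example, with $B=(2,1)$ and $k=1$, crossing the first diagonal nearer $(0,1)$ gives $14$ order ideals instead of $13$. The statement covers such curves, and your step 2 never treats them.

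Step 1 is moot in the paper's framework. Definition~\ref{def:PosetPpq}(2) needs a unique common endpoint of consecutive crossed segments, so $\calP_\gamma$ is only defined for curves already in minimal position. There is no poset in which your comparison of $M_\tau M'_\tau$ with the identity takes place.
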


\begin{proof}
This was shown for $k=0$ in \cite{LLRS} and for $k > 0$ in \cite{banaian2025}. 
\end{proof}

Connecting a few results gives us an efficient method to count order ideals in fence posets. First, we will define the \emph{shape}  of a fence poset. Given positive integers $c_1,c_2,\ldots,c_n$, let $\ell_j = \sum_{i=1}^j c_i$. A fence poset of shape $(c_1,c_2,\ldots,c_n)$ is one of the following two forms \[
x_1 \succ x_2 \succ \cdots \succ x_{c_1-1} \succ x_{\ell_1} \prec x_{\ell_1+1} \prec \cdots \prec x_{\ell_1 + c_2 - 1} \prec x_{\ell_2}  \succ x_{\ell_2 +1 }\cdots  x_{\ell_n}
\]
or
\[
x_1 \prec x_2 \prec \cdots \prec x_{c_1-1} \prec x_{\ell_1} \succ x_{\ell_1+1} \succ \cdots \succ x_{\ell_1 + c_2 - 1} \succ x_{\ell_2}  \prec \cdots  x_{\ell_n}
\]

For example, the poset in Example~\ref{ex:P_{3,2}} has shape $(3,5,4,2)$, and the poset in Example~\ref{ex:P_{4,2}} has shape $(3,4,5,1,2,3)$. 

Given a finite sequence of positive integers $a_1,a_2,\ldots,a_n$, define the \emph{continued fraction} $[a_1,a_2,\ldots,a_n]$ to be the following rational number,
\[
[a_1,\ldots,a_n] = a_1 + \cfrac{1}{a_2 + \cfrac{1}{a_3 + \cfrac{1}{\ddots + \cfrac{1}{a_n}}}}.
\]

Let $\mathcal{N}[a_1,\ldots,a_n]$ be the numerator of $[a_1,\ldots,a_n]$ as a reduced fraction. 

\begin{lemma}\label{lem:CountOrderIdeals}
The number of order ideals of a fence poset of shape $(c_1,c_2,\ldots,c_n)$ is $\mathcal{N}[c_1,c_2,\ldots,c_n+1]$.
\end{lemma}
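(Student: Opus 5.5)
The plan is to prove the lemma by strong induction on the number $n$ of segments, using the three–term recurrence satisfied by continuants. For positive integers $a_1,\ldots,a_n$, let $K(a_1,\ldots,a_n)$ be the continuant. It satisfies $K(a_1,\ldots,a_n)=a_nK(a_1,\ldots,a_{n-1})+K(a_1,\ldots,a_{n-2})$, with $K()=1$ and $K(a_1)=a_1$. Consecutive continuants are coprime, so $\mathcal{N}[a_1,\ldots,a_n]=K(a_1,\ldots,a_n)$ and the fraction is already reduced. Write $F(c_1,\ldots,c_n)$ for the number of order ideals of a fence of shape $(c_1,\ldots,c_n)$.

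First I would reduce to one orientation. Sending an order ideal to its complement is a bijection onto the order ideals of the dual poset. The dual of a fence of a given shape is the fence of the same shape with the other orientation, so $F$ depends only on the shape. For the base case $n=1$, the fence is a chain of $c_1$ elements, which has $c_1+1=K(c_1+1)$ order ideals.

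For the inductive step, label the poset as in the definition of shape, oriented so that the last segment is increasing: $x_{\ell_{n-1}}\prec x_{\ell_{n-1}+1}\prec\cdots\prec x_{\ell_n}$. Then $x:=x_{\ell_{n-1}}$ is minimal in the whole fence, since its other neighbour lies above it. I would split the order ideals $I$ according to whether $x\in I$.

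If $x\in I$, the last chain contributes an independent initial segment, giving $c_n+1$ choices. The remainder is an arbitrary order ideal of the first $\ell_{n-1}-1$ elements: because $x$ is minimal, adjoining it to any ideal of the rest again gives an ideal. Those first $\ell_{n-1}-1$ elements form a fence of shape $(c_1,\ldots,c_{n-2},c_{n-1}-1)$.

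If $x\notin I$, then nothing in the last chain lies in $I$. Nothing in segment $n-1$ lies in $I$ either, since all of it sits above $x$, and this includes the turning element $x_{\ell_{n-2}}$. What remains is free, namely a fence of shape $(c_1,\ldots,c_{n-2}-1)$. This yields
\[
F(c_1,\ldots,c_n)=(c_n+1)\,F(c_1,\ldots,c_{n-1}-1)+F(c_1,\ldots,c_{n-2}-1).
\]
By induction, $F(c_1,\ldots,c_{n-1}-1)=K(c_1,\ldots,c_{n-1})$ and $F(c_1,\ldots,c_{n-2}-1)=K(c_1,\ldots,c_{n-2})$. The continuant recurrence then gives exactly $K(c_1,\ldots,c_{n-1},c_n+1)$, as required.

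The main obstacle is the degenerate cases, where a decremented entry becomes $0$ or the shape becomes empty; the case $n=2$ in particular has to be checked separately. I would fix the convention that a shape whose last entry is $0$ means the fence with that segment deleted. I would then verify that this convention is compatible with the continued fraction side via the identity $[a_1,\ldots,a_m,1]=[a_1,\ldots,a_m+1]$, equivalently $K(a_1,\ldots,a_m,1)=K(a_1,\ldots,a_m+1)$, together with the convention that the empty fence has one order ideal, which matches $K()=1$. With these conventions the induction goes through uniformly.
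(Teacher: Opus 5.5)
Your proof is correct, and it takes a genuinely different route from the paper's.

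\textbf{The paper's route.} The paper gives no direct argument. It composes two results from the literature:
\begin{itemize}
\item the bijection of \cite{musiker2013bases} between order ideals of a fence poset and perfect matchings of the corresponding snake graph;
\item the theorem of \c{C}anak\c{c}{\i} and Schiffler \cite{CSContFrac} that the perfect matchings of a snake graph are counted by the numerator of its continued fraction.
\end{itemize}

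\textbf{Your route.} You stay entirely on the poset side. After using duality to fix the orientation, the turning element $x_{\ell_{n-1}}$ is a minimal element of the whole fence. Conditioning on whether it lies in the ideal gives $F(c_1,\ldots,c_n)=(c_n+1)F(c_1,\ldots,c_{n-1}-1)+F(c_1,\ldots,c_{n-2}-1)$. This is exactly the continuant recurrence with last entry $c_n+1$. The identity $K(a_1,\ldots,a_m,1)=K(a_1,\ldots,a_m+1)$ correctly handles the cases where an entry drops to $0$.

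\textbf{One wording fix.} The element $x_{\ell_{n-2}}$ is the last element of segment $n-2$, not of segment $n-1$. The reason the remaining fence on $x_1,\ldots,x_{\ell_{n-2}-1}$ is unconstrained is that $x_{\ell_{n-2}}$ is a local maximum. Excluding it therefore imposes no condition on its lower neighbour.

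\textbf{What each approach buys.} Your argument is self-contained and elementary. It also shows plainly where the $+1$ on the last entry comes from: once the minimum of the final chain is in the ideal, the chain contributes $c_n+1$ free initial segments. In effect it is the poset-side version of the inductive proof of the snake-graph formula. The paper's route costs nothing given the literature, and it keeps the count tied to the snake graph and cluster algebra picture that motivates the whole setup. However, it silently relies on the paper's shape conventions matching those of two external sources, a check your argument avoids.
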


\begin{proof}
This is the result of combining \cite[Theorem 5.4]{musiker2013bases}, a bijection between order ideals of a fence poset and perfect matchings of a family of graphs, and \cite[Theorem A]{CSContFrac}, an enumeration formula for the number of perfect matchings of these graphs. 
\end{proof}

Lemma~\ref{lem:CountOrderIdeals} makes the following immediately evident.

\begin{cor}
The $k$-Markov distance between points $A,B \in \mathbb{Z}^2$ is given by $\mathcal{N}[c_1,c_2,\ldots,c_n+1]$ where $(c_1,c_2,\ldots,c_n)$ is the shape of $\calP_{AB}$.
\end{cor}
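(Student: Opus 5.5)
The corollary follows by composing two facts already in place: Definition~\ref{def:kMarkovDistance} and Lemma~\ref{lem:CountOrderIdeals}. No new combinatorics is required; the only work is checking that the hypotheses of the lemma hold for the poset $\calP_{AB}$.

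First, I unwind the definition. By Definition~\ref{def:kMarkovDistance}, $\vert AB\vert_k$ is the number of order ideals of $\calP_{AB}$. By Definition~\ref{def:PosetBetweenAnyTwoPoints}, $\calP_{AB}=\calP_C$ with $C=B-A$, and by construction $\calP_C=\calP_{\gamma^L_C}$.

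Second, I check that $\calP_{AB}$ is a fence poset. Definition~\ref{def:PosetPpq} produces a chain of $k+1$ elements for each crossing. Consecutive chains are joined by a single cover relation between $(\tau,k)$ and $(\tau',0)$. So the Hasse diagram is a path, and $\calP_{AB}$ is a fence poset with some shape $(c_1,\ldots,c_n)$. The shape is obtained by reading off the maximal runs of consistently oriented covers along the path.

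The point needing care is the degenerate case $A=B$, or more generally $\calP_{AB}=\emptyset$. There is no shape in the usual sense, and the empty poset has exactly one order ideal. I would treat this by convention: take the empty shape with numerator $1$, or the shape $(1)$ read appropriately. Alternatively one can restrict attention to $A\neq B$, where the curve crosses at least one segment and the poset is nonempty.

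Third, I apply Lemma~\ref{lem:CountOrderIdeals}. It says a fence poset of shape $(c_1,\ldots,c_n)$ has $\mathcal{N}[c_1,\ldots,c_n+1]$ order ideals. Combining this with the first step gives $\vert AB\vert_k=\mathcal{N}[c_1,\ldots,c_n+1]$.

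Finally, I would note that the shape is well defined up to the two orientations in the definition of shape. Reversing the orientation does not change the number of order ideals, since it is an isomorphism; this is consistent with Lemma~\ref{lem:SwapAandB}. Hence the formula does not depend on the choice of form.

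The main obstacle is only this bookkeeping: confirming that the construction in Definition~\ref{def:PosetPpq} always yields a path, and handling the empty-poset convention.
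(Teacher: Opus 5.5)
Your proposal is correct and matches the paper's argument: the paper records this corollary as immediate from Definition~\ref{def:kMarkovDistance}, which sets $|AB|_k=|J(\calP_{AB})|$, combined with Lemma~\ref{lem:CountOrderIdeals}. One small correction to your degenerate-case remarks: $A\neq B$ does not force $\calP_{AB}\neq\emptyset$ (for $B-A=(1,0)$ the segment lies along $\mathcal{L}$, so the poset is empty and $|AB|_k=m^{(k)}_{(1,0)}=1$), so the empty-shape convention with value $1$ is genuinely needed there (shape $(1)$ would give $\mathcal{N}[2]=2$), while the case $A=B$ is better excluded outright, since the paper later uses the convention $m^{(k)}_{(0,0)}=0$.
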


\begin{example}
As previously noted, the shape of the poset $\calP_{(3,2)}$ is $(3,5,4,2)$. Therefore, $m^{(1)}_{\frac23}$ is the numerator of $[3,5,4,3]$, which is 217. Similarly, $m^{(1)}_{(4,2)}$ is the numerator of $[3,4,5,1,2,3]$, which is 771. By translation, we also for example have $\vert(1,1)(5,3)\vert_1 = 771$. 
\end{example}

\section{Generalized Ptolemy Relation}\label{sec:Ptolemy}

A key tool for the proofs of our main results is the following generalized Ptolemy relation which the $k$-Markov distance function satisfies. 

\begin{prop}\label{prop:ptolemy}
    Let $A, B, C, D \in \mathbb{Z}^2$ be four distinct points such that one of the following holds.
    \begin{enumerate}
        \item[$(a)$] The segments $\overline{AB}$, $\overline{BC}$, $\overline{CD}$, and $\overline{DA}$ 
        form a convex quadrilateral with diagonals $\overline{AC}$ and $\overline{BD}$.
        
        \item[$(b)$] The points $A$, $B$, and $D$ are non-collinear (so that $\triangle ABD$ is non-degenerate) 
        and $C$ lies on the segment $\overline{BD}$.

        \item[$(c)$] The points $A,B,C$, and $D$ are collinear, $B$ lies on $\overline{AC}$, and $C$ lies on $\overline{BD}$.
    \end{enumerate}
    Then,
    \[
        |AC|_k \cdot |BD|_k \geq |AB|_k \cdot |CD|_k + |AD|_k \cdot |BC|_k.
    \]
\end{prop}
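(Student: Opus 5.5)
Case (a) is the generalized Ptolemy inequality for points in general (convex) position, which the paper refers to as coming from \cite{banaian2025}. I would take it from there: the segments $\overline{AC}$ and $\overline{BD}$ cross, and one builds an injection
\[
J(\calP_{AB})\times J(\calP_{CD}) \;\sqcup\; J(\calP_{AD})\times J(\calP_{BC}) \hookrightarrow J(\calP_{AC})\times J(\calP_{BD}),
\]
where $J(\cdot)$ denotes the set of order ideals. The injection comes from a skein-type resolution at the crossing: cut and reglue the curves at the intersection point, and splice the order ideals accordingly. This is the order-ideal analogue of the snake-graph smoothing argument. So the real work is extending this to the degenerate configurations (b) and (c).

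For (b), I would deform the picture slightly. Replace $C$ by a point $C'$ just off the segment $\overline{BD}$, on the side away from $A$, so that $A,B,C',D$ form a convex quadrilateral. The distances involving $C'$ are not defined through $\calP$ of lattice segments. So instead of moving the point, I would bend curves. Let $\gamma_{BC}$ and $\gamma_{CD}$ be the (left-perturbed) straight curves, and let $\gamma_{AC}$ be the straight curve from $A$ to $C$. Near $C$, the curve $\gamma_{BD}^L$ (or $\gamma_{BD}^R$, whichever makes it pass on the side of $C$ facing $A$) must cross $\gamma_{AC}$ exactly once.

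By the convention behind $\gamma^L,\gamma^R$, the poset of $\gamma_{BD}$ perturbed around $C$ equals $\calP_{BD}$; the paper's remark that the left/right choice has no lasting effect is exactly what is needed here. Then the crossing argument of case (a) applies to the curves $\gamma_{AC}$ and $\gamma_{BD}^{\pm}$. The smoothings produce curves homotopic (rel lattice points) to the broken paths $A\to B$, $C\to D$ and $A\to D$, $B\to C$. Near $C$ these smoothings are small loops around the puncture-free side, so their posets should coincide with $\calP_{CD}$ and $\calP_{BC}$ on the nose. This gives the same injection and the inequality. Proposition~\ref{prop:Straight} also offers a fallback: wherever a smoothed curve is not straight, its order-ideal count exceeds the straight distance, which only helps in the wrong direction. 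So I must check that the smoothed curves really are isotopic to the straight ones in the sense of giving identical posets, not merely comparable ones.

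For (c), all four points lie on one line in the order $A,B,C,D$. Then $\calP_{AD}$ is obtained by gluing $\calP_{AB}$, a perturbation chain around $B$, $\calP_{BC}$, a chain around $C$, and $\calP_{CD}$, as in Example~\ref{ex:P_{4,2}}. Using the continued fraction description (Lemma~\ref{lem:CountOrderIdeals}), order ideals of a fence glued from two pieces satisfy a transfer relation. Write $\calP_{AC}=\calP_{AB}\ast\calP_{BC}$, where $\ast$ denotes gluing with the chain of $k+1$ elements (or two chains, whichever the perturbation gives). Then one gets a $2\times2$ matrix identity: the count for a glued fence is a bilinear form in the ``full/empty endpoint'' counts of the pieces. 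Expressing all six quantities this way, the inequality reduces to a determinant-type inequality, since the matrices have positive entries and determinants of a controlled sign.

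The main obstacle is case (c), and in particular the exact form of the connecting chains at collinear lattice points, which depends on the left-biased convention and on $k$. I would first verify it in the ordinary case $k=0$ against \cite{LLRS}, then do the matrix computation for general $k$.
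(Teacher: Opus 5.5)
\textbf{Gap in case (b).} You read Proposition~\ref{prop:Straight} in the wrong direction. Resolving the crossing of $\gamma_{CA}$ with the detour of $\gamma_{BD}$ around $C$ gives an \emph{exact} identity
\[
|AC|_k\,|BD|_k=|J(\calP_{\gamma_3})|\,|J(\calP_{\gamma_4})|+|J(\calP_{\gamma_5})|\,|J(\calP_{\gamma_6})|.
\]
Here $\gamma_3,\gamma_5$ are the broken curves $B\to C\to A$ and $D\to C\to A$, turning on the $A$-side of $C$, and $\gamma_4,\gamma_6$ join $C$ to $D$ and to $B$. Proposition~\ref{prop:Straight} says every curve from $X$ to $Y$ has at least $|XY|_k$ order ideals. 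So each factor on the right is bounded \emph{below} by the corresponding straight distance, which is exactly the inequality you want. This is the paper's whole argument for (b).

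At the poset level the paper realizes the identity by the Type~1 resolution (Definition~\ref{def:ResolutionType1}, Proposition~\ref{prop:Type1Resolution}) of $\calP_{BD}$ and $\calP_{CA}$, taken at the index $j$ where the detour of $\gamma_{BD}$ crosses $\gamma_{CA}$. There $\calP_{CA}$ is kept whole, and the truncations $\calP_1[u,h_1]$ and $\calP_1[1,v]$ do the pulling tight at $C$. So this is not literally the interior-crossing relation of case (a).

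The check you set yourself instead, that all smoothed curves have the straight posets, is false in general. Take $k=0$, $B=(0,0)$, $C=(1,0)$, $D=(2,0)$, $A=(2,3)$. Then
\[
|AC|_0|BD|_0=13\cdot 3=39, \qquad |AB|_0|CD|_0+|AD|_0|BC|_0=29+8=37.
\]
Since the resolution identity is exact, some smoothed curve must have strictly more order ideals than its straight counterpart. Here it is $\gamma_3$, which passes on the other side of the lattice point $(1,1)$ inside triangle $ABC$. So (b) is genuinely strict in general, and can only be reached through the comparison you dismissed.

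\textbf{Case (c).} Your transfer-matrix route is a reasonable alternative to the paper's, but it stops before the decisive point. Positivity of entries will not supply that point, because (c) is an \emph{equality}.

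The paper gets it from Proposition~\ref{prop:RecurrenceAlongy=q/px}: $M_n:=m^{(k)}_{(np,nq)}$ satisfies $M_{n+1}=\eta M_n-M_{n-1}$ with $M_0=0$. Hence $M_n$ is a multiple of $\lambda^n-\lambda^{-n}$, and the identity is checked on exponents.

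In your language, index states by (absent, present). Let $L=\left(\begin{smallmatrix}1&0\\1&1\end{smallmatrix}\right)$ and $U=\left(\begin{smallmatrix}1&1\\0&1\end{smallmatrix}\right)$ be the transfer matrices of an upward and a downward cover. Let $W_{AB},W_{BC},W_{CD}$ be the transfer products of the pieces and $G$ the gluing block, and set $x^{\top}=\mathbf{1}^{\top}W_{AB}$, $y=W_{CD}\mathbf{1}$. Cauchy--Binet applied to $W_{BC}$, which has determinant $1$, gives
\[
|AC|_k|BD|_k-|AD|_k|BC|_k=-(x^{\top}Ge)(e^{\top}Gy), \qquad e=(1,-1)^{\top}.
\]

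For the actual gluing in Figure~\ref{fig:SketchPnpnq}, the connecting piece is a descending chain of $3k+3$ elements (not $k+1$), entered and left by upward covers. So $G=LU^{3k+2}L$, which gives $Ge=\mathbf{1}$ and $e^{\top}G=-\mathbf{1}^{\top}$. This turns the right-hand side into exactly $|AB|_k|CD|_k$, and it is equivalent to $M_0=0$. That computation is the real content of (c), and it is missing from your sketch.
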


Our proof for this critical proposition will require several tools. Part (a) of Proposition~\ref{prop:ptolemy} was shown in \cite{banaian2025} using ``poset skein relations''. In order to show part (b), we will introduce one of these relations. 

In order to concisely reference elements and subposets of a fence poset, we introduce an indexing. If $\calP$ is a fence poset on $h$ elements, we refer to these elements as $\calP(1),\calP(2),\ldots,\calP(h)$ in such a way that $\calP(i)$ has cover relations with $\calP(i-1)$ and $\calP(i+1)$, if these elements exist. Unless $h = 1$, there are two ways to introduce this indexing; informally, these consist of reading the Hasse diagram of the fence poset left-to-right or right-to-left. Using this indexing, we can define subposets $\calP[a,b] = \{\calP(i): a \leq i \leq b\}$. In the following, we define $\calP[b+1,b] = \emptyset$. The phrasing in the following definition is meant to match previous work by the first author on which this is based (see \cite{banaian2025, banaian2024skein}).

\begin{definition}\label{def:ResolutionType1}
Let $\calP_1$ and $\calP_2$ be two fence posets. Let $h_1:= \vert \calP_1 \vert$. Choose an integer $1 \leq j \leq h_1 -1$. Assume without loss of generality that $\calP_1(j) \succ \calP_1(j+1)$. Define the \emph{Type 1 resolution} of $\calP_1$ and $\calP_2$ with respect to $j$ to be the collection of posets $\calP_3,\calP_4,\calP_5,$ and $\calP_6$ defined as follows.

\begin{itemize}
    \item Let $\calP_3$ be the transitive closure of taking the subposets $\calP_1[1,j] \cup \calP_2$ and setting $\calP_1(j) \prec \calP_2(1)$.
    \item Let $u > j+1$ be the smallest integer such that $\calP_1(j+1) \not\succ \calP_1(u)$, if it exists; otherwise, set $u = h_1+1$. Let $\calP_4$ be the subposet $\calP_1[u,h_1]$.
    \item Let $\calP_5$ be the transitive closure of taking the subposets $\calP_1[j+1,h_1] \cup \calP_2$ and setting $\calP_1(j+1) \succ \calP_2(1)$.
    \item Let $v<j$ be the largest integer such that $\calP_1(v) \not \succ \calP_1(j)$, if it exists; otherwise, set $v = 0$. Let $\calP_6$ be the subposet $\calP_1[1,v]$.
\end{itemize}
\end{definition}

Let the set of order ideals of a poset $\calP$ be $J(\calP)$.

\begin{prop}\label{prop:Type1Resolution}
Let $\calP_1$ and $\calP_2$ be two fence posets. Let $1 \leq j \leq \vert \calP_1 \vert - 1$. Let $\calP_3,\calP_4,\calP_5,$ and $\calP_6$ be the Type 1 resolution of $\calP_1$ and $\calP_2$ with respect to $j$. Then, \[
\vert J(\calP_1) \vert \cdot \vert J(\calP_2) \vert  = \vert J(\calP_3)\vert \cdot \vert J(\calP_4) \vert  + \vert J(\calP_5) \vert \cdot \vert J(\calP_6) \vert 
\]
\end{prop}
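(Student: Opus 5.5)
We prove the identity with an explicit bijection between $J(\calP_1)\times J(\calP_2)$ and the disjoint union $\bigl(J(\calP_3)\times J(\calP_4)\bigr)\sqcup\bigl(J(\calP_5)\times J(\calP_6)\bigr)$. Let $I_1 \in J(\calP_1)$ and $I_2 \in J(\calP_2)$. The relation $\calP_1(j)\succ\calP_1(j+1)$ means there are three possibilities for $I_1\cap\{\calP_1(j),\calP_1(j+1)\}$: it is empty, it equals $\{\calP_1(j+1)\}$, or it contains both elements. The first step is to split pairs $(I_1,I_2)$ according to whether $\calP_2(1)\in I_2$, and to compare this with the status of $\calP_1(j)$ and $\calP_1(j+1)$ in $I_1$.

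\textbf{Step 1: the forward map.} Suppose $I_1\cap\calP_1[1,j]$ together with $I_2$ is an order ideal of $\calP_3$. This is the glued poset where $\calP_1(j)\prec\calP_2(1)$, so the only condition is: if $\calP_2(1)\in I_2$, then $\calP_1(j)\in I_1$. In this case we send the pair to $\bigl((I_1\cap\calP_1[1,j])\cup I_2,\ I_1\cap\calP_1[u,h_1]\bigr)\in J(\calP_3)\times J(\calP_4)$.

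Otherwise $\calP_2(1)\in I_2$ and $\calP_1(j)\notin I_1$. Then the relation $\calP_1(j+1)\succ\calP_2(1)$ required by $\calP_5$ is automatically compatible with $(I_1\cap\calP_1[j+1,h_1])\cup I_2$. In this case we map the pair to $\bigl((I_1\cap\calP_1[j+1,h_1])\cup I_2,\ I_1\cap \calP_1[1,v]\bigr)\in J(\calP_5)\times J(\calP_6)$.

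\textbf{Step 2: recovering the lost elements.} Information is lost on the segments $\calP_1[j+1,u-1]$ and $\calP_1[v+1,j]$, and we must check that it is forced.

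In the first case, take $\calP_1(j)\in I_1$ when $\calP_2(1) \in I_2$. Every element of $\calP_1[j+1,u-1]$ lies below $\calP_1(j+1)\prec\calP_1(j)$, so these elements all belong to $I_1$. That is, the chain from $\calP_1(j+1)$ down to $\calP_1(u-1)$ is determined.

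In the second case, $\calP_1(j)\notin I_1$, and every element of $\calP_1[v+1,j-1]$ lies above $\calP_1(j)$. Hence none of them are in $I_1$.

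The remaining subcase, $\calP_2(1)\notin I_2$, is where the argument is asymmetric. Here the forward map into the first term must still remember $I_1$ on $[j+1,u-1]$. So I would instead refine the split. First, send $(I_1,I_2)$ to the second term whenever $\calP_2(1)\in I_2$ and $\calP_1(j)\notin I_1$. Then handle the first term with a \emph{shift}: when $\calP_2(1)\notin I_2$, the pair is encoded by $\calP_3$-ideals of the form (ideal containing $\calP_1(j)$ or not), with the data on $[j+1,u-1]$ transferred.

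Concretely, I expect the cleanest route to be the standard one from the skein-relation literature (\cite{banaian2024skein}): prove the identity via the continued-fraction/transfer-matrix description. Write $|J(\calP)|$ for a fence poset as an entry of a product of $2\times 2$ matrices, one factor per element, where the factor depends on whether the cover is up or down. Gluing two fences along a cover relation then corresponds to multiplying the two products with a specific middle matrix. Under this encoding, the identity $|J(\calP_1)||J(\calP_2)|=|J(\calP_3)||J(\calP_4)|+|J(\calP_5)||J(\calP_6)|$ becomes a $2\times2$ determinant (Plücker-type) identity $ad-bc=\det$. The cutting points $u$ and $v$ appear because a maximal monotone chain adjacent to the cut contributes a unipotent factor that collapses.

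\textbf{Main obstacle.} The hardest part is the bookkeeping at $u$ and $v$: showing that deleting the full chains $\calP_1[j+1,u-1]$ and $\calP_1[v+1,j]$, rather than single elements, is exactly right. I would first verify the identity when $\calP_1$ is a single chain ($u=h_1+1$, $v=0$), directly by counting. Then I would reduce the general case to it by induction on $h_1$, removing an end element of $\calP_1$ away from the cut. This uses the recursion $|J(\calP)|=|J(\calP\setminus x)|+|J(\calP\setminus \{x,\text{neighbors forced}\})|$ at an extremal element $x$, which acts linearly and identically on both sides of the identity.
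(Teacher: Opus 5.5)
\textbf{There is a genuine gap: the partition in your Step 1 is the wrong one, and no "shift" can repair it.}

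Your first class is defined by the single condition ``$\calP_2(1)\in I_2\Rightarrow\calP_1(j)\in I_1$''. It contains every pair with $\calP_1(j+1)\notin I_1$ and $\calP_2(1)\notin I_2$; call this set $X$, which contains $(\emptyset,\emptyset)$. On $X$ your map forgets $I_1\cap\calP_1[j+1,u-1]$, and that data is not forced. For example, $I_1=\emptyset$ and $I_1=\calP_1[j+1,u-1]$ (the principal ideal of $\calP_1(j+1)$) are both order ideals, and with $I_2=\emptyset$ they have the same image.

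This is a counting obstruction, not a bookkeeping one. Your first class equals $M_1\sqcup X$, where $M_1$ is the set actually in bijection with $J(\calP_3)\times J(\calP_4)$. So it has strictly more than $|J(\calP_3)|\,|J(\calP_4)|$ elements, and your second class has strictly fewer than $|J(\calP_5)|\,|J(\calP_6)|$. Concretely, take $\calP_1(1)\succ\calP_1(2)$, let $\calP_2$ be a point, and set $j=1$. Your first class has $4$ pairs, while $|J(\calP_3)|\,|J(\calP_4)|=3\cdot 1$. Your ``refined split'' still sends every pair with $\calP_2(1)\notin I_2$ to the first term, so it has the same defect.

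\textbf{The missing idea is to split on $\calP_1(j+1)$ as well.} You listed the three states of $\{\calP_1(j),\calP_1(j+1)\}$ but never used them. The correct classes are:
\begin{itemize}
\item $M_1$: pairs with $\calP_1(j+1)\in I_1$ and ($\calP_2(1)\in I_2\Rightarrow\calP_1(j)\in I_1$);
\item $M_2$: pairs with $\calP_1(j)\notin I_1$ and ($\calP_1(j+1)\in I_1\Rightarrow\calP_2(1)\in I_2$).
\end{itemize}
In particular, all pairs with $\calP_1(j+1)\notin I_1$ go to the second term, whatever $I_2$ is. With this split your Step 2 forcing arguments become correct.
\begin{itemize}
\item In $M_1$, $\calP_1(j+1)\in I_1$ forces $\calP_1[j+1,u-1]\subseteq I_1$. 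Since $\calP_1(u-1)\prec\calP_1(u)$, the restriction to $\calP_1[u,h_1]$ is an arbitrary ideal of $\calP_4$. The implication is exactly the glued relation $\calP_1(j)\prec\calP_2(1)$ of $\calP_3$.
\item In $M_2$, $\calP_1(j)\notin I_1$ empties $\calP_1[v+1,j]$ and leaves $\calP_1[1,v]$ free. The implication is exactly the relation $\calP_1(j+1)\succ\calP_2(1)$ of $\calP_5$.
\end{itemize}
This is the paper's proof.

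\textbf{The fallback routes do not close the gap.} The transfer-matrix/determinant identity is only asserted, with no computation. The induction plan fails as stated. Take $\calP_1(1)\succ\calP_1(2)\prec\calP_1(3)$ with $j=1$. This is not a chain, and no end element lies away from the cut, since $\calP_1(1)=\calP_1(j)$. Removing the maximal element $\calP_1(3)$ produces the term $J(\calP_1[1,1])$, in which $j$ is the last index, outside the range of the statement. In $\calP_5$ the forced chain below $\calP_1(3)$ runs through $\calP_1(2)$ into $\calP_2$. So the recursion does not act ``identically on both sides'' there.
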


\begin{proof}
This can be shown by translating the results in \cite{CS13} to fence posets. For the benefit of the reader, we include a proof. Let $M_1 \subseteq J(\calP_1) \times J(\calP_2)$ consist of pairs of order ideals $(I_1,I_2)$ such that $P_1(j+1) \in I_1$ and if $P_2(1) \in I_2$, then $P_1(j) \in I_1$. Let $M_2 \subseteq J(\calP_1) \times J(\calP_2)$ consist of pairs of order ideals $(I_1,I_2)$ such that $P_1(j) \notin I_1$ and if $P_1(j+1) \in I_1$, then $P_2(1) \in I_2$. One can see $M_1 \cup M_2 = J(\calP_1) \times J(\calP_2)$ and $M_1 \cap M_2 = \emptyset$. Moreover, $M_1$ is in bijection with $J(\calP_3) \times J(\calP_4)$ and $M_2$ is in bijection with $J(\calP_5) \times J(\calP_6)$. The claim follows.
\end{proof}

Our next goal is to prove Proposition~\ref{prop:RecurrenceAlongy=q/px}, which is  advantageous to addressing quadruples of points satisfying condition (c) of Proposition~\ref{prop:ptolemy}. This proposition generalizes results from \cite{LLRS} and \cite{banaian2024generalization} which showed the $k=0$ and $k=1$ cases respectively. We first provide some set-up.

Recall, given $n \geq 1$ and coprime integers $p$ and $q$, the curve $\gamma_{(np,nq)}$ consists of $n$ curves which are nearly isotopic to $\gamma_{(p,q)}$ connected by $n-1$ semicircles about integral points. Accordingly the poset $\calP_{(np,nq)}$ consists of $n$ subposets isomorphic to $\calP_{(p,q)}$, connected by chains consisting of $3k+3$ elements. This structure is described in Figure~\ref{fig:SketchPnpnq}, where we denote the $i$-th copy of $\calP_{(p,q)}$ as $R_i$ and the $i$-th chain as $H_i$. Recall Example~\ref{ex:P_{4,2}} also gave an explicit instance of this phenomenon.

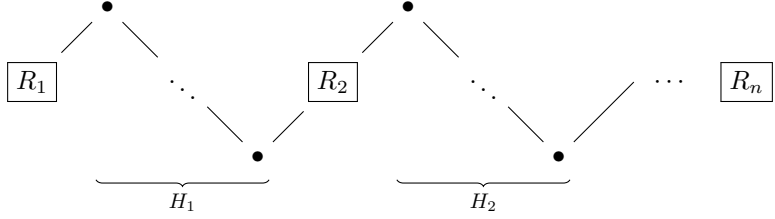
\begin{figure}
\centering
\begin{tikzpicture}
\node(R1) at (0,0){$\boxed{R_1}$};
\node(1) at (1,1){$\bullet$};
\node(dots1) at (2,0){$\ddots$};
\node(2) at (3,-1){$\bullet$};
\draw [decorate,
    decoration = {brace, mirror}] (0.85,-1.3) to node[below, scale = 0.8, yshift = -3pt]{$H_1$}  (3.15,-1.3);
\node(R2) at (4,0){$\boxed{R_2}$};
\node(3) at (5,1){$\bullet$};
\node(dots2) at (6,0){$\ddots$};
\node(4) at (7,-1){$\bullet$};
\draw [decorate,
    decoration = {brace, mirror}] (4.85,-1.3) to node[below, scale = 0.8, yshift = -3pt]{$H_2$}  (7.15,-1.3);
\node(dots3) at (8.5,0){$\cdots$};
\node(Rn) at (9.5,0){$\boxed{R_n}$};
\draw(R1) -- (1);
\draw(1) -- (dots1);
\draw(dots1) -- (2);
\draw(2) -- (R2);
\draw(R2) -- (3);
\draw(3) -- (dots2);
\draw(dots2) -- (4);
\draw(4) -- (8,0);
\end{tikzpicture}
\caption{A sketch of a poset of the form $\calP_{(np,nq)}$ given coprime integers $p$ and $q$ and a positive integer $n$ where each $R_i$ is a subposet isomorphic to $\calP_{(p,q)}$ and each $H_i$ is a chain containing $3k+3$ elements. }\label{fig:SketchPnpnq}
\end{figure}

Our proof of Proposition~\ref{prop:RecurrenceAlongy=q/px} will use another skein relation. This will involve a poset which is the result of adding a relation between the extreme elements of a fence poset. Such posets have been associated to closed curves on a surface (see \cite[Theorem 5.7]{musiker2013bases}). In particular, define $\calP^\circ_{(p,q)}$ be the poset on $h_{(p,q)} + 3k + 3$ elements such that (1) $\calP^\circ_{(p,q)}[3k+4,h_{(p,q)}+3k+3]$ is isomorphic to $\calP_{(p,q)}$, (2) $\calP^\circ_{(p,q)}(3k+3) \prec \calP^\circ_{(p,q)}(3k+4)$, (3) for all $1 < i <  3k+3$, $\calP^\circ_{(p,q)}(i) \succ \calP^\circ_{(p,q)}(i+1)$ and (4) $\calP^\circ_{(p,q)}(h_{(p,q)}+3k+3) \prec \calP^\circ_{(p,q)}(1)$. Conditions (1)-(3) describe a fence poset given by considering $H_1$ and $R_2$ as in Figure~\ref{fig:SketchPnpnq} (see also the definition of $\widetilde{\calP}$ in \cite{BG} as well as \cite[Section 7]{ouguz2025oriented}) while condition (4) makes the Hasse diagram a cycle instead of a path. Since $\calP^\circ_{(p,q)}$ is not a fence poset, we must use slightly different methods to enumerate its order ideals. 

\begin{lemma}\label{lem:NumberOfOrderIdealsCircular}
Let $p$ and $q$ be coprime integers. The number of order ideals of $\calP^\circ_\fqp$ is given by $(3+3k)m_{\frac{p}{q}}^{(k)} - k$.
\end{lemma}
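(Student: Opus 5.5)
We count $J(\calP^\circ_{(p,q)})$ by cutting the cycle at the extra relation (4). Let $\calP$ be the fence poset obtained by deleting relation (4); by (1)--(3) it consists of a chain $H$ on $3k+3$ elements $\calP(1)\succ\cdots\succ\calP(3k+3)$ followed by a copy $R$ of $\calP_{(p,q)}$, with $\calP(3k+3)\prec\calP(3k+4)=R(1)$. Here we read condition (3) as holding for all $1\le i<3k+3$; this reading should be checked against the intended construction. An order ideal of $\calP$ is an order ideal of $\calP^\circ_{(p,q)}$ exactly when it does not contain $\calP(1)$ while omitting $\calP(h_{(p,q)}+3k+3)=R(h)$. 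Hence
\[
|J(\calP^\circ_{(p,q)})| = |J(\calP)| - \#\{I\in J(\calP): \calP(1)\in I,\ R(h)\notin I\}.
\]

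\textbf{Step 1 (count $J(\calP)$).} Condition on the restriction $I\cap R$. If $R(1)\in I$, then $\calP(3k+3)\in I$, and the part of $I$ in $H$ is one of the $3k+3$ nonempty down-sets of the chain. If $R(1)\notin I$, there are $3k+4$ choices. With $m=m^{(k)}_{(p,q)}=|J(R)|$ and $b=\#\{I\in J(R): R(1)\notin I\}$, this gives $|J(\calP)|=(3k+3)m+b$.

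\textbf{Step 2 (count the bad ideals).} If $\calP(1)\in I$, the whole chain $H$ lies in $I$, which imposes no condition on $R$. So the bad ideals correspond to ideals of $R$ avoiding $R(h)$; call their number $c$. Therefore
\[
|J(\calP^\circ_{(p,q)})|=(3k+3)m+b-c,
\]
and the lemma reduces to the identity $c-b=k$.

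\textbf{Step 3 (the key identity $c-b=k$).} This is where the real work lies, and it depends on the local shape of $\calP_{(p,q)}$ at its two ends. As in Example~\ref{ex:P_{3,2}}, the curve $\gamma_{(p,q)}$ crosses $\tau_1$ closer to its right endpoint, so $R(1)=(\tau_1,0)$ is the top of a descending run. At the other end the last crossing gives an ascending chain $(\tau_h,0)\prec\cdots\prec(\tau_h,k)$, so $R(h)$ is maximal as well.

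Since $R(1)$ and $R(h)$ are both maximal, $b=|J(R\setminus R(1))|$ and $c=|J(R\setminus R(h))|$. By Lemma~\ref{lem:CountOrderIdeals}, if $R$ has shape $(c_1,\dots,c_n)$, these are the numerators of the continued fractions for the shapes $(c_1-1,c_2,\dots,c_n)$ and $(c_1,\dots,c_n-1)$, with the obvious modification if a part becomes $0$. I would then use the symmetry of $\gamma_{(p,q)}$ under the half-turn about $(p/2,q/2)$ (together with Remark~\ref{rem:LeftBias}) to show that the shape of $\calP_{(p,q)}$ is a palindrome, except that the first and last parts differ in a controlled way: in Example~\ref{ex:P_{3,2}} they are $3=k+2$ versus $2=k+1$. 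Comparing the two numerators with the standard determinant/continuant identities should then yield a difference of exactly $k$.

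As a sanity check, for $(p,q)=(1,0)$ or $(1,1)$ one can compute directly: $R$ is a single chain of $k+1$ elements, giving $b=0$ and $c=k$.

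The main obstacle is pinning down this end-asymmetry of the shape for general coprime $(p,q)$. If the continuant comparison proves messy, the fallback is induction on the Farey tree. One would use Proposition~\ref{prop:Type1Resolution} to relate the ideals of $\calP_{(p,q)}$ avoiding an end element to those of its two Farey parents, and show that the quantity $c-b$ is preserved, so that it equals its value $k$ at the base cases.
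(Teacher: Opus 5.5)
\paragraph{Steps 1 and 2.} These are correct and give exactly the paper's reduction. The paper's partition $A_0,A_1,A_2,\dots,A_{3k+3}$ counts $b+(m-c)+(3k+2)m$, which is your $(3k+3)m+b-c$. The paper then imports the identity $c-b=k$ from \cite[Lemma 8.25]{BG} together with \cite[Lemma 33]{banaian2025}.

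\paragraph{The gap: Step 3.} This step is the entire content of the lemma, and the route you propose cannot work.
\begin{itemize}
\item The shape of $\calP_{(p,q)}$ is not a palindrome apart from its end parts. For $(p,q)=(3,2)$, $k=1$, the shape is $(3,5,4,2)$, whose middle $(5,4)$ is not a palindrome. For $(2,1)$, $k=1$, the shape is $(3,3)$, so the last part is not $k+1$.
\item Even if the structure you describe held, it would give the wrong answer. Take first part $k+2$, last part $k+1$ and palindromic middle $(c_2,\dots,c_{n-1})$. Lemma~\ref{lem:CountOrderIdeals} gives $c=\mathcal N[k+2,c_2,\dots,c_{n-1},k+1]$ and $b=\mathcal N[k+1,c_2,\dots,c_{n-1},k+2]$. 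These numerators are continuants, which are invariant under reversal, so you would get $b=c$.
\item The ``$k+2$ versus $k+1$'' is only an artifact of the shape convention, since the first part counts one extra element. Measured in cover relations, both end runs have length $k+1$.
\item The Farey-tree fallback is not yet an argument. Proposition~\ref{prop:Type1Resolution} gives product identities for posets of curves, whereas $b$ and $c$ count ideals of end-truncated fences, and you give no mechanism by which $c-b$ would be preserved.
\end{itemize}

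\paragraph{Where the asymmetry actually is.} It sits at the center of the fence. The half-turn about $(p/2,q/2)$ preserves $\mathcal L$, reverses $\gamma_{(p,q)}$ and exchanges its two sides. So it pairs the $i$-th and $(h'+1-i)$-th crossings, where $h'$ is the number of crossings, and flips every chain and every connecting relation. The one exception is the crossing through $(p/2,q/2)$ itself. Since $\gcd(p,q)=1$, this is the only midpoint crossing, and the left bias makes its chain ascending. Hence the cover word of $\calP_{(p,q)}$ has the form $w\,\prec^{k}\,\overline{w}^{\mathrm{rev}}$, where the bar swaps $\succ$ and $\prec$.

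\paragraph{A short argument that closes the gap.} Use transfer matrices $S=\begin{pmatrix}1&1\\0&1\end{pmatrix}$ for $\succ$ and $P=S^{T}$ for $\prec$. They act on the vector (number of ideals containing, number not containing, the current element). Let $N$ be the matrix of $w$; reversing and barring $w$ transposes it, so the fence has matrix $N^{T}P^{k}N$ with $\det N=1$. Thus $b=(1,1)N^{T}P^{k}N(0,1)^{T}$. Transposing the expression for $c$ gives $c=(1,1)N^{T}S^{k}N(0,1)^{T}$. Since $S^{k}-P^{k}=kJ$ with $J=\begin{pmatrix}0&1\\-1&0\end{pmatrix}$ and $N^{T}JN=J$, we get $c-b=k\,(1,1)J(0,1)^{T}=k$. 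That computation, or the cited lemma, is the missing ingredient.

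\paragraph{Sanity checks.} These also need correcting.
\begin{itemize}
\item For $(1,1)$ the chain is ascending, so $R(1)$ is minimal, not maximal, and $b=1$, $c=k+1$.
\item For $(1,0)$ one has $\calP_{(1,0)}=\emptyset$, since $m^{(k)}_{(1,0)}=1$. The connecting chain around $(1,0)$ has only $2k+2$ elements, consistent with $m^{(k)}_{(2,0)}=2k+3$, so this case is degenerate rather than a check of your Step 3.
\end{itemize}
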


\begin{proof}
We partition the order ideals of $\calP^\circ_\fqp$ based on their support on the first chain.  Let $A_0$ be the set of order ideals $I \in J(\calP^\circ_\fqp)$ such that $I \cap\calP^\circ_\fqp[1, 3k + 3] = \emptyset$ and let $A_1$ be the set of order ideals $I \in J(\calP^\circ_\fqp)$ such that $\calP^\circ_\fqp[1, 3k + 3] \subseteq I$. Given $2 \leq i \leq 3k+3$, $A_i$ be the set of order ideals such that $\calP^\circ_\fqp(i) \in I$ and  $\calP^\circ_\fqp(i-1) \notin I$. 

For all $2 \leq i \leq 3k+3$, $\vert A_i \vert = \vert J(\calP_\fqp) \vert = m^{(k)}_{\frac{q}{p}}$. Based on the structure of $\calP^\circ_{(p,q)}$, we have \[
A_0 = \vert \{I \in J(\calP_\fqp): \calP_{\frac{q}{p}}(1) \notin I\}\vert \qquad \text{ and } \qquad A_1 =  \vert \{I \in J(\calP_{\frac{q}{p}}): \calP_{\frac{q}{p}}(h_\fqp) \in I\}\vert.
\]

Combining \cite[Lemma 8.25]{BG} with the translation to extended posets in \cite[Lemma 33]{banaian2025}, we have that $\vert \{I \in J(\calP_{\frac{q}{p}}): \calP_{\frac{q}{p}}(1) \notin I\}\vert$ is equal to $\vert \{I \in J(\calP_{\frac{q}{p}}):  \calP_{\frac{q}{p}}(h_\fqp) \notin I\}\vert - k$. Since tautologically $\vert \{I \in J(\calP_{\frac{p}{q}}):  \calP_{\frac{p}{q}}(h_{(p,q)}) \notin I\}\vert + \vert \{I \in J(\calP_{\frac{p}{q}}):  \calP_{\frac{p}{q}}(h_{(p,q)}) \in I\}\vert = m_{\frac{p}{q}}^{(k)}$, the claim follows. 
\end{proof}

We are now prepared for our recurrence on generalized $k$-Markov numbers. 

\begin{prop}\label{prop:RecurrenceAlongy=q/px}
 Let $p$ and $q$ be coprime integers.  The sequence $\{m^{(k)}_{np,nq}\}_{n \in \mathbb{N}}$ satisfies \[
m^{(k)}_{(n+2)p,(n+2)q} = ((3+3k)m^{(k)}_\fqp - k)m^{(k)}_{(n+1)p,(n+1)q} - m^{(k)}_{np,nq}
 \]
 with initial conditions $m^{(k)}_{0,0} = 0$ and $m^{(k)}_{p,q} = m^{(k)}_\fqp$. Consequently, if $\eta:= (3 + 3k)m^{(k)}_\fqp -k$, then
 $m^{(k)}_{np,nq}$ is given by \[
\frac{m^{(k)}_\fqp}{\sqrt{\eta^2 - 4}}\bigg(\frac{\eta + \sqrt{\eta^2 - 4}}{2}\bigg)^n - \frac{m_\fqp^{(k)}}{\sqrt{\eta^2 - 4}}\bigg(\frac{\eta - \sqrt{\eta^2 - 4}}{2}\bigg)^n 
 \]   
\end{prop}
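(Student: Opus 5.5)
We will prove the recurrence with a skein relation that splits $\calP_{((n+1)p,(n+1)q)}$ against the circular poset $\calP^\circ_{(p,q)}$, using the block structure of Figure~\ref{fig:SketchPnpnq}. The closed form then follows from standard linear-recurrence theory.

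Write $\calP_{((n+1)p,(n+1)q)}$ as $R_1 H_1 R_2 \cdots H_n R_{n+1}$, where each $R_i \cong \calP_{(p,q)}$ and each $H_i$ is a chain of $3k+3$ elements. The guiding identity is the standard fact from the cluster setting: a closed loop $\gamma_{(p,q)}$ multiplied by the arc $\gamma_{((n+1)p,(n+1)q)}$ equals the arc $\gamma_{((n+2)p,(n+2)q)}$ plus the arc $\gamma_{(np,nq)}$. By Lemma~\ref{lem:NumberOfOrderIdealsCircular}, the loop contributes $(3+3k)m^{(k)}_{\fqp}-k$ order ideals.

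The combinatorial proof mimics the argument of Proposition~\ref{prop:Type1Resolution}. Let $\calP_1=\calP^\circ_{(p,q)}$ and $\calP_2=\calP_{((n+1)p,(n+1)q)}$. We cut the cycle $\calP_1$ at the cover relation between its last element and $\calP_1(1)$, which is condition (4) in the definition of $\calP^\circ$. We then partition $J(\calP_1)\times J(\calP_2)$ into two sets $M_1,M_2$ according to whether $\calP_1(1)$ and the endpoint of $\calP_2$ lie in the respective ideals, exactly as in the Type 1 proof.

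Gluing produces two pieces:
\begin{itemize}
\item Opening the cycle and attaching $H_1R_2$ onto the end of $\calP_2$ yields a poset of the form $R_1H_1\cdots R_{n+1}H R$, which is $\calP_{((n+2)p,(n+2)q)}$. This gives $M_1\leftrightarrow J(\calP_{((n+2)p,(n+2)q)})$.
\item The complementary set $M_2$ corresponds to deleting the forced chain portion. A pair of ideals in $M_2$ is determined by an order ideal of $\calP_{(np,nq)}$ (the poset $R_1H_1\cdots R_n$). The forced elements are $H_n R_{n+1}$ together with the circle, and they cancel against one full period.
\end{itemize}
This yields
\[|J(\calP^\circ_{(p,q)})|\, m^{(k)}_{(n+1)p,(n+1)q} = m^{(k)}_{(n+2)p,(n+2)q} + m^{(k)}_{np,nq}.\]
For $n=0$ the second set must be empty, which is consistent with $m^{(k)}_{0,0}=0$, so the initial condition is part of the same bijection. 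I would check this base case ($n=0$, and $n=1$ via Example~\ref{ex:P_{4,2}}) numerically against the values already computed, e.g.\ $k=1$, $(p,q)=(2,1)$.

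The main obstacle is making the bijection for $M_2$ exact. The chain $H$ has $3k+3$ elements with a specific orientation (conditions (2)–(3) of $\calP^\circ$). One has to verify that the forced inclusions and exclusions propagate through all of $H_n$ and exactly one copy of $R$, and stop there, leaving precisely $\calP_{(np,nq)}$. This should use the left-biased convention (Remark~\ref{rem:LeftBias}) so that the endpoints of the successive copies $R_i$ have matching local shape.

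If the direct argument becomes messy, a fallback is to induct using Proposition~\ref{prop:Type1Resolution} repeatedly, cutting inside $H_{n}$. A second fallback is to pass to continued fractions via Lemma~\ref{lem:CountOrderIdeals}, where the shape is periodic. Then $m^{(k)}_{np,nq}$ is an entry of $M^n$ for a $2\times2$ transfer matrix $M$ of determinant $1$, and Cayley–Hamilton gives the recurrence with coefficient $\operatorname{tr} M$. That trace would then be identified with $|J(\calP^\circ_{(p,q)})|=\eta$ via Lemma~\ref{lem:NumberOfOrderIdealsCircular}, since the number of order ideals of a cyclic fence poset equals the trace of its transfer product.

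Finally, the closed form. The characteristic polynomial $x^2-\eta x+1$ has roots $\lambda_\pm=\frac{\eta\pm\sqrt{\eta^2-4}}{2}$, which are distinct since $\eta>2$. Writing $m^{(k)}_{np,nq}=c_+\lambda_+^n+c_-\lambda_-^n$, the initial conditions $m^{(k)}_{0,0}=0$ and $m^{(k)}_{p,q}=m^{(k)}_\fqp$ force $c_+=-c_-=m^{(k)}_\fqp/(\lambda_+-\lambda_-)=m^{(k)}_\fqp/\sqrt{\eta^2-4}$, which is the stated formula.
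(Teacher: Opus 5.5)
Your main argument has a genuine gap: the Type~1 mechanism cannot produce either of the two bijections you claim.

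Proposition~\ref{prop:Type1Resolution} concerns two fence posets, and its partition depends only on the states of three elements at one cover relation. The only elements it forces are those on a monotone run (the indices $u$, $v$ in Definition~\ref{def:ResolutionType1}). So a partition defined by the states of $\calP^\circ_{(p,q)}(1)$ and the last element of $\calP_{((n+1)p,(n+1)q)}$ can never force all of $H_nR_{n+1}$ out and the whole cycle in, as your $M_2$ requires. The propagation you call the ``main obstacle'' simply stops at the first turn of the fence $R$.

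On the other side, opening the cycle and concatenating is neither well defined on all pairs nor onto $J(\calP_{((n+2)p,(n+2)q)})$. Relation (4) ties the top of $H$ to the last element of $R$. In $\calP_{((n+2)p,(n+2)q)}$, however, the top of $H_{n+1}$ is tied to the last element of $R_{n+1}$.

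The smallest case shows both failures. Take $k=0$, $(p,q)=(1,1)$, $n=0$. Then $\calP_{(1,1)}=\{r_1\}$, the cycle $\calP^\circ_{(1,1)}$ is $c_1\succ c_2\succ c_3\prec c_4\prec c_1$ (6 ideals), and $\calP_{(2,2)}$ is $r_1\prec h_1\succ h_2\succ h_3\prec r_2$ (12 ideals).
\begin{itemize}
\item All $6\cdot 2$ pairs must go to $J(\calP_{(2,2)})$, yet the Type~1 analogue of $M_2$ contains, for example, $(\emptyset,\emptyset)$.
\item The pair $(\calP^\circ_{(1,1)},\emptyset)$ concatenates to $\{h_1,h_2,h_3,r_2\}$, which is not an ideal.
\item The ideal $\{r_1,h_1,h_2,h_3\}$ is never hit, since $\{c_1,c_2,c_3\}$ is not an ideal of the cycle.
\end{itemize}
The correct matching exchanges $r_1$ and $c_4$, i.e.\ it swaps the two copies of $R$.

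That swap is the missing idea, and it is what the paper does. The arc and the loop run parallel along $R_{n+1}$ and $R$. One compares the ideal $I$ of $\calP_{((n+1)p,(n+1)q)}$ on $R_{n+1}$ with the ideal $I^\circ$ of $\calP^\circ_{(p,q)}$ on $R$. If there is a first switching position $j$, one exchanges the tails after $j$ and puts $I^\circ|_H$ on $H_{n+1}$. By \cite[Lemma 3]{banaian2024skein}, the remaining pairs are those where one restriction is all of $R$ and the other is empty. These split into four explicit cases. Three of them fill out the rest of $J(\calP_{((n+2)p,(n+2)q)})$, and exactly one, $I|_{H_n\cup R_{n+1}}=\emptyset$ with $I^\circ=\calP^\circ_{(p,q)}$, is sent to $J(\calP_{(np,nq)})$. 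Your description of the forced elements matches that last set, but it is a global condition, and its complement is not a concatenation.

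Your second fallback, by contrast, does give a complete proof by a different route. Use element-by-element transfer matrices rather than continued fractions, since runs merge across junctions: for $k=1$ the shape of $\calP_{(2,1)}$ is $(3,3)$, but that of $\calP_{(4,2)}$ is $(3,4,5,1,2,3)$.

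Let $L=\begin{pmatrix}1&0\\1&1\end{pmatrix}$ for a cover $x\prec y$ and $D=\begin{pmatrix}1&1\\0&1\end{pmatrix}$ for $x\succ y$. A fence has $\mathbf{1}^{T}M_1\cdots M_{h-1}\mathbf{1}$ ideals and a cyclic fence has $\operatorname{tr}(M_1\cdots M_h)$, where $M_i$ is the matrix of the $i$-th cover. Let $T$ be the product along $R$, and let $U=LD^{3k+2}L$ be the product across a junction, the chain $H$, and the next junction. Then:
\begin{itemize}
\item $m^{(k)}_{(np,nq)}=\mathbf{1}^{T}T(UT)^{n-1}\mathbf{1}$ and $\det(UT)=1$;
\item $\operatorname{tr}(UT)=\eta$ by Lemma~\ref{lem:NumberOfOrderIdealsCircular}, because relation (4) is the same $\prec$ as the junction from $R_i$ to $H_i$;
\item Cayley--Hamilton then gives the recurrence for $n\ge 1$.
\end{itemize}
The case $n=0$ is the identity $m^{(k)}_{(2p,2q)}=\eta\, m^{(k)}_{\fqp}$. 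It is not automatic, because the convention $m^{(k)}_{(0,0)}=0$ differs from $|J(\emptyset)|=1$. It amounts to $\mathbf{1}^{T}U^{-1}\mathbf{1}=0$, which holds because $U^{-1}=\begin{pmatrix}3k+3&-(3k+2)\\-(3k+4)&3k+3\end{pmatrix}$.

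This algebraic route is shorter than the paper's and makes the base case transparent. The paper's bijection instead realizes the identity as an honest skein relation on order ideals. Your closed-form derivation is fine.

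If you run your proposed numerical check, note that the paper's example value $771$ for $m^{(1)}_{(4,2)}$ drops the $+1$ of Lemma~\ref{lem:CountOrderIdeals}. The correct value is $\mathcal{N}[3,4,5,1,2,4]=1001=77\cdot 13$, as the recurrence predicts.
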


\begin{proof}
By Lemma~\ref{lem:NumberOfOrderIdealsCircular}, we can prove the first statement by exhibiting a bijection between $J(\calP_{((n+1)p,(n+1)q)}) \times J(\calP^\circ_{(p,q)})$ and $J(\calP_{((n+2)p,(n+2)q)}) \cup J(\calP_{(np,nq)})$.

As discussed earlier, the poset $\calP_{((n+1)p,(n+1)q)}$ has $n+1$ subposets isomorphic to $\calP_{\fqp}$, labeled $R_1,\ldots,R_{n+1}$. Let $R_j(i)$ denote the element associated to $\calP_{\fqp}(i)$ in the $j$-th copy.  Let $H$ and $R$, without subscripts, denote $\calP^\circ_{\fqp}[1,3k+3]$ and $\calP^\circ_{\fqp}[3k+4,  h_{\fqp}+3k+3]$ respectively. We will similarly let $R(i) = \calP^\circ_{\fqp}(i + 3k + 3)$. 

We partition $J(\calP_{((n+1)p,(n+1)q)}) \times J(\calP^\circ_{(p,q)})$ into several sets and consider maps from each subset into $J(\calP_{((n+2)p,(n+2)q)}) \cup J(\calP_{(np,nq)})$. Let $A_0 \subset J(\calP_{((n+1)p,(n+1)q)}) \times J(\calP^\circ_{(p,q)})$ consist of all pairs $(I_1,I_2)$ such that there exists at least one value $i$ such that either ($R_{n+1}(i) \in I_1$ and $R(i) \in I_2$) or ($R_{n+1}(i) \notin I_1$ and $R(i) \notin I_2$). In this case, we let $j$ be the smallest such value and we say that $I_1$ and $I_2$ \emph{have a switching position} at $j$, as in \cite{banaian2024skein} and inspired by \cite{CS13}. 

Given $(I_1,I_2) \in A_0$ with a switching position at $j$, we define $\Psi(I_1,I_2) = I \in J(\calP_{((n+2)p,(n+2)q)})$ as follows. For all $1 \leq i \leq n$, set $I\vert_{R_i} = I_1 \vert_{R_i}$ and $I \vert_{H_i} = I_1 \vert_{H_i}$. We also set $I \vert_{H_{n+1}} = I_2 \vert_H$. Next, we let \[
I\vert_{R_{n+1}}= \{R_{n+1}(i) : (i \leq j \text{ and } R_{n+1}(i) \in I_1) \text{ or } (i > j \text{ and } R(i) \in I_2) \}
\]
and 
\[
I\vert_{R_{n+2}}= \{R_{n+2}(i) : (i \leq j \text{ and } R(i) \in I_2) \text{ or } (i > j \text{ and } R_{n+1}(i) \in I_1) \}.
\]

One can check that $I$ is indeed an order ideal. The image of $A_0$ under the map $\Psi$ is the set of $I \in J(\calP_{((n+2)p,(n+2)q)})$ such that $I$ has a switching position between $I \vert_{R_{n+1}}$ and $I \vert_{R_{n+2}}$. 

 Now, we partition the complement of $A_0$ in $J(\calP_{((n+1)p,(n+1)q)}) \times J(\calP^\circ_{(p,q)})$. By \cite[Lemma 3]{banaian2024skein}, this consists of pairs $(I_1,I_2)$ such that $I_1 \cap R_{n+1} = R_{n+1}$ and $I_2 \cap R = \emptyset$ or vice versa. 

Let $A_1 \subset J(\calP_{((n+1)p,(n+1)q)}) \times J(\calP^\circ_{(p,q)})$ consist of all pairs $(I_1,I_2)$ such that $I_1 \vert_{R_{n+1}} = R_{n+1}$ and $I_2 \vert_R = \emptyset$. Given $(I_1,I_2) \in A_1$, define $\Psi(I_1,I_2)$ to be $I \in J(\calP_{((n+2)p,(n+2)q)})$ such that $I \vert_{R_i} = I_1 \vert_{R_i}$ for all $1 \leq i \leq n+1$, $I \vert_{H_i} = I_1 \vert_{H_i}$ for all $1 \leq i \leq n$ and $I \vert_{H_{n+1}} = I_2 \vert_{H}$. The image of $A_1$ under the map $\Psi$ consists of all $I \in J(\calP_{((n+2)p,(n+2)q)})$ such that $I \vert_{R_{n+1}} = R_{n+1}$, $I \vert_{R_{n+2}} = \emptyset$, and the maximal element of $H_{n+1}$ is not in $I$.

Let $A_2 \subset J(\calP_{((n+1)p,(n+1)q)}) \times J(\calP^\circ_{(p,q)})$ consist of all pairs $(I_1,I_2)$ such that $I_1 \vert_{R_{n+1}} = \emptyset$, $I_2 \vert_R = R$, and the maximal element of $H$ is not in $I_2$. Given $(I_1,I_2) \in A_2$, define $\Psi(I_1,I_2)$ to be $I \in J(\calP_{((n+2)p,(n+2)q)})$ such that $I \vert_{R_i} = I_1 \vert_{R_i}$ for all $1 \leq i \leq n+1$, $I \vert_{H_i} = I_1 \vert_{H_i}$ for all $1 \leq i \leq n$, $I \vert_{R_{n+2}} = I_2 \vert_{R_{n+2}}$, and $I \vert_{H_{n+1}} = I_2 \vert_{H}$. The image of $A_2$ under the map $\Psi$ consists of all $I \in J(\calP_{((n+2)p,(n+2)q)})$ such that $I \vert_{R_{n+1}} = \emptyset$ and $I \vert_{R_{n+2}} = R_{n+2}$.

Let $A_3 \subset J(\calP_{((n+1)p,(n+1)q)}) \times J(\calP^\circ_{(p,q)})$ consist of all pairs $(I_1,I_2)$ such that $I_1 \vert_{R_{n+1}} = \emptyset$, $I_2 \vert_R = R$, the maximal element of $H$ is in $I_2$, and the minimal element of $H_n$ is in $I_1$.  Given $(I_1,I_2) \in A_3$, define $\Psi(I_1,I_2)$ to be $I \in J(\calP_{((n+2)p,(n+2)q)})$ such that $I \vert_{R_i} = I_1 \vert_{R_i}$ for all $1 \leq i \leq n$, $I \vert_{H_i} = I_1 \vert_{H_i}$ for all $1 \leq i \leq n$, $I \vert_{R_{n+1}} = I_2 \vert_{R}$, and $I \vert_{H_{n+1}} = I_2 \vert_{H}$. The image of $A_3$ under the map $\Psi$ consists of all $I \in J(\calP_{((n+2)p,(n+2)q)})$ such that $I \vert_{R_{n+1}} = R_{n+1}$, $I \vert_{R_{n+2}} = \emptyset$, and the maximal element of $H_{n+1}$ is in $I$.

Finally, let $A_4 \subset J(\calP_{((n+1)p,(n+1)q)}) \times J(\calP^\circ_{(p,q)})$ consist of all pairs $(I_1,I_2)$ such that $I_1 \vert_{R_{n+1}} = \emptyset$, $I_2 \vert_R = R$, the maximal element of $H$ is in $I_2$, and the minimal element of $H_n$ is not in $I_1$. Alternatively, $A_4$ can be described as all pairs $(I_1,I_2)$ such that  $I_1 \vert_{H_n \cup R_{n+1}} = \emptyset$ and $I_2 = \calP^\circ_{(p,q)}$. Any pair $(I_1,I_2) \in A_4$ can naturally be sent to an order ideal $I \in J(\calP_{(np,nq)})$ given by restricting $I_1$ to $R_1 \cup H_1 \cup \cdots \cup R_n$.

One can observe that $\Psi$ is injective. Since $A_0,\ldots,A_4$ partition $J(\calP_{((n+1)p,(n+1)q)}) \times J(\calP^\circ_{(p,q)})$, and the images partition $J(\calP_{((n+2)p,(n+2)q)}) \cup J(\calP_{(np,nq)})$, we conclude that $\Psi$ is indeed bijective and the first statement follows. One can prove the second statement using a straightforward argument involving recurrence relations.
\end{proof}

\begin{proof}[Proof of Proposition~\ref{prop:ptolemy}]
If $\overline{AB}$, $\overline{BC}$, $\overline{CD}$, and $\overline{DA}$  satisfy condition (a), then the statement follows from \cite[Proposition 49]{banaian2025}. 

Next, suppose these four line segments satisfy condition (b). Assume that $B$ and $D$ are labeled such that $\gamma_{CA}$ intersects $\gamma_{BD}$; if this was not true, then $\gamma_{CA}$ would intersect $\gamma_{DB}$. 

Notice that the last line segment from $\mathcal{L}$ which $\gamma_{CA}$ intersects, call it $\tau_1$, forms a triangle with two consecutive line segments which $\gamma_{BD}$ intersects. Let these two consecutive line segments be $\tau_2$ and $\tau_3$, labeled such that $\tau_3$ follows $\tau_2$ in counterclockwise order. Recall this implies that $(\tau_2,k)$ covers $(\tau_3,0)$. Let $j$ be such that $\calP_{BD}(j) =(\tau_2,k)$.

\begin{center}
\begin{tikzpicture}
\node[right] at (3,1){$B$};
\node[right] at (2,2){$A$};
\node[right] at (1,3){$D$};
\draw(2,2) -- (2,1) -- (1,2) -- (2,2);
\draw[thick, blue] (-2,0.5) -- (2,2);
\draw[red,thick] (3,1) -- (2.2,1.8) to [out = 180, in = 240, looseness = 2] (1.8,2.2) -- (1,3);
\draw[fill = black] (-2,0.5) circle [radius=2pt];
\node[above] at (-2,0.5){$C$};
\draw[fill = black] (2,2) circle [radius=2pt];
\draw[fill = black] (3,1) circle [radius=2pt];
\draw[fill = black] (1,3) circle [radius=2pt];
\node[below] at (1.5,1.5){$\tau_1$};
\node[above] at (1.25,2){$\tau_3$};
\node[right] at (2,1.25){$\tau_2$};
\end{tikzpicture}
\end{center}

Let $\calP_3,\calP_4,\calP_5,$ and $\calP_6$ be the resolution of $\calP_1 = \calP_{BD}$ and $\calP_2 = \calP_{CA}$ with respect to $j$ as in Definition~\ref{def:ResolutionType1}. The key to the proof is recognizing that there are curves $\gamma_3,\gamma_4,\gamma_5,$ and $\gamma_6$ such that $\gamma_3$ has endpoints in $A$ and $B$;  $\gamma_4$ has endpoints in $C$ and $D$;  $\gamma_5$ has endpoints in $A$ and $D$;  $\gamma_6$ has endpoints in $B$ and $C$; and $\calP_i = \calP_{\gamma_i}$. 

Indeed, we could recognize $\gamma_3$ to be the result of following $\gamma_{BD}$ in the positive direction until its intersection point with $\gamma_{CA}$, then following $\gamma_{CA}$. Let $\gamma_4'$ be the result of first following $\gamma_{CA}$ then $\gamma_{BD}$, both in the positive direction and switching at their intersection point. Then, $\gamma_4$ is the result of pulling $\gamma_4$ tight so that it does not cross any line segment incident to $C$. The curves $\gamma_5$ and $\gamma_6$ can be built in an analogous way. 

Therefore, by using first Proposition~\ref{prop:Type1Resolution} and then Proposition~\ref{prop:Straight}, we have \[
\vert J(\calP_{AC}) \vert \cdot \vert J(\calP_{BD}) \vert = \vert J(\calP_3) \vert \cdot \vert J(\calP_4) \vert +  \vert J(\calP_5) \vert \cdot \vert J(\calP_6) \vert \geq \vert J(\calP_{AB}) \vert \cdot \vert J(\calP_{BC}) \vert + \vert J(\calP_{AB}) \vert \cdot \vert J(\calP_{BC}) \vert,
\]

and the claim follows from the definition of $\vert AB \vert_k$.

Finally, suppose these four line segments satisfy condition (c). Then, there exist integers $p$ and $q$ and a rational number $b$ such that $\gcd(\vert p \vert,\vert q \vert) = 1$ and $A,B,C,$ and $D$ all lie on the line $\ell:y= \frac{q}{p}x + b$. 

Let $s,t,u$ be nonnegative integers such that there are $s$ integral points on $\ell$ between $A$ and $B$, $t$ integral points between $B$ and $C$, and $u$ integral points between $C$ and $D$. We establish the following terms, \[\eta_{(p,q)} := (3+3k)m_{(p,k)}^{(k)} - k \qquad 
\alpha_{(p,q)}:= \frac{m_{(p,q)}^{(k)}}{\sqrt{\eta_{(p,q)}^2 - 4}}, \qquad \beta_{(p,q)} = \frac{\eta_{(p,q)} + \sqrt{\eta_{(p,q)}^2-4}}{4}.
\]
Then, a direct consequence of Proposition~\ref{prop:RecurrenceAlongy=q/px} is that \begin{align*}
\vert AC \vert_k \cdot \vert BD \vert_k &= \bigg(\alpha_{(p,q)}\big(\beta_{(p,q)}^{s+t} - \beta_{(p,q)}^{-s-t}\big)\bigg)\cdot \bigg(\alpha_{(p,q)}\big(\beta_{(p,q)}^{t+u} - \beta_{(p,q)}^{-t-u}\big)\bigg)\\
&=\alpha_{(p,q)}^2\big(\beta_{(p,q)}^{s+2tu} + \beta_{(p,q)}^{-s-2t-u} - \beta_{(p,q)}^{s-u} - \beta_{(p,q)}^{u-s}\big)
\end{align*}

One can similarly compute the right hand side of the desired equation in terms of $\alpha, \beta, s,t,$ and $u$, yielding
\begin{align*}
\vert AB \vert_k \cdot \vert CD \vert_k = \alpha_{(p,q)}^2\big(\beta_{(p,q)}^{s+u} + \beta_{(p,q)}^{-s-u} - \beta_{(p,q)}^{s-u} - \beta_{(p,q)}^{u-s}\big)
\end{align*}
and 
\begin{align*}
\vert AD \vert_k \cdot \vert BC \vert_k = \alpha_{(p,q)}^2\big(\beta_{(p,q)}^{s+2t + u} + \beta_{(p,q)}^{-s-2t -u} - \beta_{(p,q)}^{s+u} - \beta_{(p,q)}^{-s-u}\big).
\end{align*}

Therefore, we can conclude in this case that $\vert AC \vert_k \cdot \vert BD \vert_k = \vert AB \vert_k \cdot \vert CD \vert_k + \vert AD \vert_k \cdot \vert BC \vert_k$.
\end{proof}

\section{Special Subsequences}\label{sec:Sequences}

The region $\mathcal{R}$ is bordered by the lines $y =0$ and $y = x$.  It will be advantageous to consider the generalized $k$-Markov numbers along and near these lines. 

We begin by considering the $k$-Markov numbers sequences labeled by the sequences $\{\frac{1}{n}\}_{n \geq 0}$ and $\{\frac{n}{n+1}\}_{n \geq 0}$. It is well-known that the numbers $m^{(0)}_{\frac1n}$ are odd-indexed Fibonacci numbers and $m^{(0)}_{\frac{n}{n+1}}$ are odd-indexed Pell numbers. When $k = 0$, Proposition~\ref{prop:kMarkovFibAndPell} recovers well-known recurrences and formulae concerning these numbers.

In the following, given two infinite sequences $\{a_n\}$ and $\{b_n\}$, we write $a_n \sim b_n$ if $\lim_{n \to \infty} \frac{a_n}{b_n} = 1$.

\begin{prop}\label{prop:kMarkovFibAndPell}
Let $k \geq 0$.
\begin{enumerate}[label = (\arabic*)]
    \item The sequence $\{f_n\}$ with $f_n:= m_{\frac1n}^{(k)}$ is given by initial conditions $f_0 = 1, f_1 = k+2$ and for all $n \geq 2$, \[
f_{n} = ( 2k+3)f_{n-1} - f_{n-2} - k.
    \]
Consequently, if $\alpha:= 4k^2 + 12k + 5$, $f_n$ is given by \[
\frac{k}{1+2k} + \frac{2k^2 + 3k + 1 + (1+k)\sqrt{\alpha}}{2(1+2k)\sqrt{\alpha}} \bigg(\frac{3 + 2k + \sqrt{\alpha}}{2}\bigg)^n + \frac{-(2k^2 + 3k + 1) + (1+k)\sqrt{\alpha}}{2(1+2k)\sqrt{\alpha}} \bigg(\frac{3 + 2k - \sqrt{\alpha}}{2}\bigg)^n
\]
so that asymptotically,\[
f_n \sim \frac{2k^2 + 3k + 1 + (1+k)\sqrt{\alpha}}{2(1+2k)\sqrt{\alpha}} \bigg(\frac{3 + 2k + \sqrt{\alpha}}{2}\bigg)^n.
\]
\item The sequence $\{g_n\}$ with $g_n:= m_{\frac{n}{n+1}}^{(k)}$ is given by initial conditions $g_0 = 1, g_1 = 2k^2 + 6k + 5$ and for all $n \geq 2$, \[
g_{n} = (3k^2 + 8k + 6 )g_{n-1} - g_{n-2} - k(k+2).
    \]
Consequently, if $\beta:= 3k^2 + 8k + 6$ and $\delta:= 3k^4 + 17k^3 + 34 k^2 + 28k + 8$, $g_n$ is given by \begin{align*}
\frac{k}{3k+2} &+ \frac{(k+1)(k+2)(\beta^2 - 4) + \delta \sqrt{\beta^2 - 4}}{(\beta-2)(\beta^2 - 4)}\bigg(\frac{\beta + \sqrt{\beta^2 - 4}}{2}\bigg)^n\\& + \frac{(k+1)(k+2)(\beta^2 - 4) -\delta \sqrt{\beta^2 - 4}}{(\beta-2)(\beta^2 - 4)}\bigg(\frac{\beta - \sqrt{\beta^2 - 4}}{2}\bigg)^n
\end{align*}
so that asymptotically, \[
g_n \sim \frac{(k+1)(k+2)(\beta^2 - 4) + \delta \sqrt{\beta^2 - 4}}{(\beta-2)(\beta^2 - 4)}\bigg(\frac{\beta + \sqrt{\beta^2 - 4}}{2}\bigg)^n.
\]
\end{enumerate}
\end{prop}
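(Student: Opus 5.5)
The plan is to compute the fence posets $\calP_{(n,1)}$ and $\calP_{(n+1,n)}$ explicitly, read off their shapes, and turn Lemma~\ref{lem:CountOrderIdeals} into a linear recurrence. The closed forms then follow from the standard solution of an inhomogeneous second-order linear recurrence.

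\textbf{Step 1: shapes for $f_n$.} The segment $\gamma_{(n,1)}$ crosses a periodic pattern of lattice segments: in each unit square it meets a vertical segment, then a diagonal, then a horizontal segment. Since $\gcd(n,1)=1$, no perturbation is needed. The crossings with horizontal segments near the start lie close to the right endpoint (below the curve), and later ones lie close to the left. Applying Definition~\ref{def:PosetPpq}, each of the $3n-1$ crossings contributes a chain of $k+1$ elements, and these chains are glued according to which side the common endpoints lie on.

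Rather than tracking the exact shape, I would argue via a decomposition that mirrors the Farey/cluster picture. $\calP_{(n,1)}$ is obtained from $\calP_{(n-1,1)}$ by appending a fixed block of $3(k+1)$ elements, namely the crossings in one extra unit square, attached at one end.

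\textbf{Step 2: the recurrence for $f_n$.} I would use the continued-fraction transfer matrices: $\mathcal N[c_1,\dots,c_n+1]$ is an entry of $\prod_i \begin{pmatrix} c_i & 1\\ 1 & 0\end{pmatrix}$. Appending a periodic block multiplies by a fixed $2\times 2$ matrix $M$ of determinant $\pm1$. Hence $f_n$ satisfies $f_n = (\operatorname{tr} M) f_{n-1} - f_{n-2}$, up to the boundary effect at the end of the poset. The boundary effect is exactly what produces the inhomogeneous $-k$, just as the $-k$ appears in Lemma~\ref{lem:NumberOfOrderIdealsCircular}.

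Alternatively, and probably more cleanly, I would apply Proposition~\ref{prop:ptolemy} in case (a) as an equality. The points $(0,0),(1,0),(n,1),(n-1,1)$ form a parallelogram whose sides have $k$-Markov distances $m_{(1,0)}^{(k)}=1$ and $f_{n-1}$, with diagonals $f_n$ and $f_{n-2}$ after translation. The skein relation Proposition~\ref{prop:Type1Resolution} would be made exact by accounting for the non-straight curves that appear. Either way, I would pin down the constants by computing small cases directly. $\calP_{(1,1)}$ and $\calP_{(1,0)}$ are chains, so $f_0=1$. A direct shape computation for $\calP_{(2,1)}$ (shape $(k+1,\dots)$) gives $f_1=k+2$ and then $f_2$, which fixes the trace $2k+3$ and the constant $-k$.

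\textbf{Step 3: $g_n$.} The same approach applies with the periodic block now being one step along $\gamma_{(n+1,n)}$: a unit $(1,1)$ diagonal step, which crosses more segments. The trace of the block matrix gives $\beta=3k^2+8k+6$, and the inhomogeneous term is $-k(k+2)$. The initial values $g_0=m^{(k)}_{(1,0)}=1$ (check the indexing convention) and $g_1=m^{(k)}_{(2,1)}$, where $(2,1)$ labels $\frac12$, are consistent with $2k^2+6k+5$. These would be computed from the explicit shape via Lemma~\ref{lem:CountOrderIdeals}.

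\textbf{Step 4: closed forms.} For a recurrence $x_n=cx_{n-1}-x_{n-2}-d$, the constant particular solution is $d/(c-2)$. This gives $k/(2k+1)$ and $k(k+2)/(3k^2+8k+4)=k/(3k+2)$. The homogeneous part has roots $(c\pm\sqrt{c^2-4})/2$, and here $c^2-4=\alpha$ and $\beta^2-4$ respectively. The coefficients are then fixed by the two initial conditions, and the asymptotics follow since the smaller root lies in $(0,1)$.

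\textbf{Main obstacle.} The hard part is Step 2: justifying rigorously that the extension by one period changes the order-ideal count linearly with the exact constant $-k$. I would attack it with an explicit bijection of the same style as the proof of Proposition~\ref{prop:RecurrenceAlongy=q/px}, partitioning by the restriction to the appended block.
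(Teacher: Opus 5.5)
Your Step 2 does not work as described, and it fails exactly at the $-k$, which is the actual content of the statement.

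First, the geometry is off. The curve $\gamma_{(n,1)}$ stays in the strip $0\le y\le 1$ and crosses no horizontal segment. It meets $n-1$ vertical and $n$ anti-diagonal segments, $2n-1$ in all, so each step adds $2(k+1)$ elements.

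More importantly, the fence does not grow at a fixed end. Crossings with $y<\tfrac12$ are right crossings and those with $y\ge\tfrac12$ are left crossings, so the switch sits in the middle of the fence and moves with $n$. Write $D$ for a cover $x_i\succ x_{i+1}$ and $U$ for $x_i\prec x_{i+1}$, read along the curve. Then $\calP_{(2r,1)}$ reads $(D^{2k+1}U)^{r-1}D^{k+1}U^{2k+1}(DU^{2k+1})^{r-1}$, and $\calP_{(2r+1,1)}$ reads $(D^{2k+1}U)^{r-1}D^{2k+1}U^{k+1}(DU^{2k+1})^{r}$. For $n=2,3,4$ the shapes are $(k+2,2k+1)$, $(2k+2,k+1,1,2k+1)$ and $(2k+2,1,k+1,2k+1,1,2k+1)$. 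So the new block is inserted at the switch, or equivalently attached at \emph{alternating} ends of the fence.

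This breaks your mechanism. If a fixed block were appended at a fixed end, you would have $f_n=u^{T}M^{n}w$, and Cayley--Hamilton would give the exact homogeneous recurrence $f_n=(\operatorname{tr}M)f_{n-1}-f_{n-2}$. A fixed boundary vector cannot create an additive constant. The $-k$ (the eigenvalue-$1$ term $\tfrac{k}{2k+1}$ in the closed form) comes precisely from the two-sided growth, which makes $f_n$ bilinear in powers of the block matrix. For $k=0$ the word collapses to the zigzag $(DU)^{n-1}$, both ends look alike, and that is why the Fibonacci recurrence is homogeneous.

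A correct transfer-matrix proof must carry out this bilinear analysis and check that the even- and odd-indexed subsequences share coefficients. Fitting constants to small cases presupposes the form you are trying to prove, and would in any case need $f_0,\dots,f_3$. Your indices are also shifted: $\calP_{(1,1)}$ is the chain giving $f_1=k+2$, while $\calP_{(2,1)}$ has shape $(k+2,2k+1)$ and gives $f_2=g_1=2k^2+6k+5$.

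The Ptolemy alternative has the same hole. Proposition~\ref{prop:ptolemy}(a) gives only $f_nf_{n-2}\ge f_{n-1}^2+1$, while the truth is $f_nf_{n-2}=f_{n-1}^2+kf_{n-1}+1$. Making Proposition~\ref{prop:Type1Resolution} exact requires identifying the resolved non-straight posets and counting their order ideals; that is the whole content, and it is left undone. Even with the exchange relation in hand you need a linearization step. Subtracting consecutive exchange relations gives $f_{n-1}(f_{n+1}+f_{n-1}+k)=f_n(f_n+f_{n-2}+k)$, so $(f_n+f_{n-2}+k)/f_{n-1}$ is constant, equal to $2k+3$.

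The paper avoids posets altogether. It takes the exchange relation from Gyoda--Matsushita (Theorem 1.1) for the adjacent Farey triples $(\tfrac01,\tfrac1{n-1},\tfrac1{n-2})$ and $(\tfrac01,\tfrac1n,\tfrac1{n-1})$. It then linearizes with the $k$-Markov equation for $(1,f_{n-1},f_{n-2})$. Fixing the entry $x$ of a $k$-Markov triple gives $z+z'=((3+3k)x-k)y-kx$. So $x=m^{(k)}_{\frac01}=1$ produces $(2k+3,\,k)$, and $x=m^{(k)}_{\frac11}=k+2$ produces $(\beta,\,k(k+2))$, explaining both constants at once. Your Step 4 is fine.
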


\begin{proof}
First, consider part (1). Since for $n \geq 2$, $(\frac{0}{1},\frac{1}{n-1},\frac{1}{n-2})$ and  $(\frac{0}{1},\frac{1}{n},\frac{1}{n-1})$ are adjacent Farey triples and $m_{\frac01}^{(k)} = 1$, by \cite[Theorem 1.1]{GyodaMatsushita} we have the following relationship among $f_{n-2},f_{n-1},$ and $f_n$, \begin{equation}
f_n = \frac{f_{n-1}^2 + kf_{n-1} + 1}{f_{n-2}}.\label{eq:MutateFibonacci}
\end{equation}

Since $(1,f_{n-1},f_{n-2})$ is a $k$-Markov triple, we have \[
f_{n-1}^2 + kf_{n-1} + 1 = (2k+3) f_{n-2}f_{n-1} - f_{n-2}^2 - kf_{n-2},
\]

and by substituting the right hand side into Equation~\ref{eq:MutateFibonacci} the first part of the claim follows. The remainder of the statement can be shown with standard methods involving recurrence relations. Part (2) can be proven similarly, using Farey triples $(\frac{n-1}{n},\frac{n}{n+1},\frac11)$. 

\end{proof}

In \cite{HuangMonotonicity}, the second author showed the even-indexed Fibonacci and Pell numbers coincide with generalized Markov numbers $\{m^{(0)}_{(n,0)}\}_{n \geq 0}$ and $\{m^{(0)}_{(n,n)}\}_{n \geq 0}$ respectively. We will consider the analogue of these sequences for general $k$.  First, we introduce a general recurrence for generalized Markov numbers along a fixed line.

\begin{lemma}\label{lem:Slope0and1}
\begin{enumerate}[label = (\arabic*)]
    \item The sequence $m^{(k)}_{(n,0)}$ is given by initial conditions $m^{(k)}_{(0,0)} = 0, m^{(k)}_{(1,0)} = 1$ and for all $n \geq 2$, \[
m^{(k)}_{(n,0)} = (2k+3) m^{(k)}_{(n-1,0)} - \mk_{(n-2,0)}.
\]
\item The sequence $m^{(k)}_{(n,n)}$ is given by initial conditions $m^{(k)}_{(0,0)} = 0, m^{(k)}_{(1,1)} = k+2$ and for all $n \geq 2$, \[
m^{(k)}_{(n,n)} = (3k^2 + 8k + 6) m^{(k)}_{(n-1,n-1)} - \mk_{(n-2,n-2)}.
\]
\end{enumerate}
\end{lemma}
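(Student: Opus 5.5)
The plan is to read both parts off Proposition~\ref{prop:RecurrenceAlongy=q/px}, applied with $(p,q)=(1,0)$ for part (1) and $(p,q)=(1,1)$ for part (2). That proposition gives, for coprime $p,q$,
\[
m^{(k)}_{((n+2)p,(n+2)q)} = \big((3+3k)m^{(k)}_{(p,q)} - k\big)\, m^{(k)}_{((n+1)p,(n+1)q)} - m^{(k)}_{(np,nq)},
\]
with $m^{(k)}_{(0,0)}=0$. This is exactly the shape claimed, so the work is to identify the base values and check the coefficients.

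For part (1), I first compute $m^{(k)}_{(1,0)}$ directly from Definition~\ref{def:PosetPpq}. The segment from $(0,0)$ to $(1,0)$ lies along a line of $\mathcal{L}$. After the left-biased perturbation it crosses no segment transversally, so $\calP_{(1,0)}=\emptyset$ and $m^{(k)}_{(1,0)}=1$, equal to $m^{(k)}_{\frac01}$. The coefficient is then $(3+3k)\cdot 1 - k = 2k+3$, which gives the stated recurrence.

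For part (2), I compute $m^{(k)}_{(1,1)}$ directly. The diagonal from $(0,0)$ to $(1,1)$ crosses only the antidiagonal segment from $(1,0)$ to $(0,1)$, and it does so exactly at its midpoint. That gives a single chain of $k+1$ elements, which is a fence poset of shape $(k+1)$. By Lemma~\ref{lem:CountOrderIdeals} it has $\mathcal{N}[k+2]=k+2$ order ideals, so $m^{(k)}_{(1,1)}=k+2$. The coefficient is $(3+3k)(k+2)-k = 3k^2+8k+6$, as claimed. The same number also appears as the constant $\beta$ in Proposition~\ref{prop:kMarkovFibAndPell}, which is a useful consistency check.

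The main obstacle is whether Proposition~\ref{prop:RecurrenceAlongy=q/px} really applies to these boundary slopes. Its proof uses the decomposition of $\calP_{(np,nq)}$ into copies $R_i$ of $\calP_{(p,q)}$ joined by chains $H_i$ of $3k+3$ elements, together with Lemma~\ref{lem:NumberOfOrderIdealsCircular}.

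For $(p,q)=(1,0)$ the copies $R_i$ are empty. I would check by hand that each perturbation around an integral point crosses the three segments at that point (vertical, antidiagonal, vertical), giving a chain of $3k+3$ elements. The circular poset $\calP^\circ_{(1,0)}$ is then a $(3k+3)$-cycle with one wrap-around relation. Its order ideals can be counted directly as $3k+3$, which equals $(3+3k)\cdot 1 - k$ only after accounting for the $-k$ correction. If the bijection in the proof degenerates when the $R_i$ are empty, I would instead argue directly. The poset $\calP_{(n,0)}$ is a fence whose shape can be written explicitly, and the recurrence follows from the continuant recursion for numerators of continued fractions, using Lemma~\ref{lem:CountOrderIdeals}.

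A final sanity check for part (1) is $n=2$. The recurrence predicts $m^{(k)}_{(2,0)}=2k+3$, and this should match a single chain of $3k+3$ elements with appropriate orientations. I would also compare the first few terms against small cases computed by hand.
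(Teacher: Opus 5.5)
Your main line is the paper's own proof: the paper also just specializes Proposition~\ref{prop:RecurrenceAlongy=q/px} to $(p,q)=(1,0)$ and $(1,1)$. Your base values $m^{(k)}_{(1,0)}=1$ and $m^{(k)}_{(1,1)}=k+2$, and your coefficients $2k+3$ and $3k^2+8k+6$, are correct. You are also right that the proof of that proposition does not literally cover $(1,0)$. However, your description of that degenerate case is wrong, and the check you propose would fail. Because $\gamma_{(n,0)}$ runs along horizontal segments of $\mathcal{L}$, only two of the six segments at an interior point $(j,0)$ lie on its left: the antidiagonal to $(j-1,1)$ and the vertical to $(j,1)$. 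There is no ``vertical, antidiagonal, vertical''. So each detour contributes a decreasing chain $H_j$ of $2k+2$ elements, not $3k+3$. Consecutive detours are joined by $\min H_j\prec\max H_{j+1}$, since their common endpoint $(j,1)$ lies to the left of the curve. In particular $\calP_{(2,0)}$ is a $(2k+2)$-chain with $2k+3$ order ideals. Any poset on $3k+3$ elements has at least $3k+4$ order ideals, so your proposed sanity check cannot succeed. Likewise, $2k+3=(3+3k)\cdot 1-k$ counts the order ideals of the loop poset, which here is a $(2k+2)$-chain; it is not a $(3k+3)$-cycle ``corrected by $-k$'', and $3k+3\neq 2k+3$ for $k>0$. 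The degeneracy is genuine. When $R=\emptyset$, the sets $A_1$ and $A_2$ in the proof of the proposition overlap. The proof of Lemma~\ref{lem:NumberOfOrderIdealsCircular} also refers to first and last elements of $\calP_{(1,0)}=\emptyset$, which do not exist.

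With the correct picture, your fallback is short. Write $a_n=m^{(k)}_{(n,0)}$. Then $\calP_{(n+1,0)}$ is $\calP_{(n,0)}$ with one more decreasing $(2k+2)$-chain $H_n$ attached by $\min H_{n-1}\prec\max H_n$. A pair consisting of an order ideal of $\calP_{(n,0)}$ and a down-set of $H_n$ fails to glue only when $H_n\subseteq I$ and $I\cap H_{n-1}=\emptyset$. Nothing in $H_{n-1}$ lies below earlier elements, so these bad pairs are counted by $a_{n-1}$. Hence $a_{n+1}=(2k+3)a_n-a_{n-1}$, with $a_1=1$ and $a_2=2k+3$, which is consistent with the convention $a_0=0$. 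Equivalently, the shape is $(2k+2,1,2k+1,\dots,1,2k+1)$, and the block $(1,2k+1)$ has continuant matrix of trace $2k+3$ and determinant $1$. Part (2) needs no repair. Slope $1$ is not a direction of $\mathcal{L}$, so each detour around $(j,j)$ does cross three segments, and $R_i\cong\calP_{(1,1)}$ is a nonempty $(k+1)$-chain. The proposition's proof therefore applies there as written.
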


\begin{proof}
Both parts are consequences of  Proposition~\ref{prop:RecurrenceAlongy=q/px}. 
\end{proof}

In the ordinary case, the numbers $m^{(0)}_{(n,1)}$ and $m^{(0)}_{(n,0)}$ intertwine as the Fibonnaci numbers while the numbers $m^{(0)}_{(n,n-1)}$ and $m^{(0)}_{(n,n)}$ intertwine as the Pell numbers. We show an analogous relationship in our setting. 

\begin{prop}\label{prop:Intertwine}
For all $n \geq 1$, we have  \[
\mk_{(n,1)} = (k+1) \mk_{(n,0)} + \mk_{(n-1,1)} 
\]
and
\[
\mk_{(n+1,n)} = 2(k+1)\mk_{(n,n)} + \mk_{(n,n-1)}.
\]

\end{prop}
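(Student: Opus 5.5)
The plan is to reduce both identities to the recurrences already established, with no new poset combinatorics. Write $f_n = \mk_{\frac1n} = \mk_{(n,1)}$ and $g_n = \mk_{\frac{n}{n+1}} = \mk_{(n+1,n)}$, as in Proposition~\ref{prop:kMarkovFibAndPell}. We also use $f_0 = \mk_{(0,1)} = 1$ and $g_0 = \mk_{(1,0)} = 1$. The claimed identities then read
\[
f_n - f_{n-1} = (k+1)\mk_{(n,0)}, \qquad g_n - g_{n-1} = 2(k+1)\mk_{(n,n)} \qquad (n\ge 1).
\]

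The key observation concerns the differences $d_n := f_n - f_{n-1}$. The recurrence $f_n = (2k+3)f_{n-1} - f_{n-2} - k$ of Proposition~\ref{prop:kMarkovFibAndPell}(1) is inhomogeneous, but the constant $-k$ cancels on subtracting consecutive instances. So for $n \ge 3$ we get $d_n = (2k+3)d_{n-1} - d_{n-2}$. This is exactly the homogeneous recurrence satisfied by $\mk_{(n,0)}$ in Lemma~\ref{lem:Slope0and1}(1), and hence also by $(k+1)\mk_{(n,0)}$. Since both sequences satisfy the same second-order linear recurrence, it suffices to match two initial values, $n=1$ and $n=2$.

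For $n=1$ we have $d_1 = (k+2) - 1 = k+1 = (k+1)\mk_{(1,0)}$. For $n=2$, the recurrence gives $f_2 = (2k+3)(k+2) - 1 - k = 2k^2+6k+5$. Hence $d_2 = 2k^2+5k+3 = (k+1)(2k+3) = (k+1)\mk_{(2,0)}$. Induction on $n$ then gives the first identity.

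The second identity is handled identically with $e_n := g_n - g_{n-1}$. Subtracting consecutive instances of Proposition~\ref{prop:kMarkovFibAndPell}(2) kills the constant $-k(k+2)$. This leaves $e_n = \beta e_{n-1} - e_{n-2}$ with $\beta = 3k^2+8k+6$, which is the recurrence for $\mk_{(n,n)}$ in Lemma~\ref{lem:Slope0and1}(2). For $n=1$, $e_1 = 2k^2+6k+4 = 2(k+1)(k+2) = 2(k+1)\mk_{(1,1)}$. For $n=2$, we need $e_2 = 2(k+1)\mk_{(2,2)} = 2(k+1)(k+2)\beta$, and a short computation gives
\[
e_2 = (\beta-1)g_1 - 1 - k(k+2) = (\beta-1)(2k^2+6k+5) - (k+1)^2.
\]
Subtracting $2(k+1)(k+2)\beta$ from this leaves $\beta - (2k^2+6k+5) - (k+1)^2 = 0$, which confirms the base case.

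The only delicate point is bookkeeping, not an obstacle. The recurrences in Proposition~\ref{prop:kMarkovFibAndPell} and Lemma~\ref{lem:Slope0and1} hold only from $n\ge 2$, so the difference recurrences hold only from $n\ge 3$. We must also make sure the initial values $f_0$ and $g_0$ are read as $\mk_{(0,1)}$ and $\mk_{(1,0)}$, both equal to $1$ from the trivial posets. With two verified base cases and matching homogeneous recurrences, the proof is complete.
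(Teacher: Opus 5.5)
Your proof is correct and takes the same route as the paper: you check $n=1,2$ directly and then carry out the inductive step using the recurrences of Proposition~\ref{prop:kMarkovFibAndPell} and Lemma~\ref{lem:Slope0and1}. Subtracting consecutive instances of the recurrence to cancel the constants $-k$ and $-k(k+2)$ is simply a tidy way of organizing that inductive step, and your base-case computations (including $e_2 = 2(k+1)(k+2)\beta$) are right.
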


\begin{proof}
One can check directly that each statement is true for $n = 1,2$, and show the inductive step using Proposition~\ref{prop:kMarkovFibAndPell} and Lemma~\ref{lem:Slope0and1}.
\end{proof}

Now, with our understanding of the generalized $k$-Markov numbers near the boundary of $\mathcal{R}$, we define two important bounds. 
For any two coprime positive integers  $ a_1 $  and  $ a_2 $ , define the constants  $S(a_1,a_2)_\pm$ by
\begin{equation}\label{eq:a_pm_def}
S(a_1,a_2)_- := \lim_{q \to \infty} \frac{m^{(k)}_{\bigl(q(1+a_1)+a_2,\; 1\bigr)}}{m^{(k)}_{\bigl(q(1+a_1),\; 1+a_1\bigr)}},
\qquad
S(a_1,a_2)_+ := \lim_{q \to \infty} \frac{m^{(k)}_{\bigl((q+1)(1+a_1+a_2),\; q(1+a_1+a_2)\bigr)}}{m^{(k)}_{\bigl(q(1+a_1+a_2)+a_1+1,\; q(1+a_1+a_2)+a_1\bigr)}}.
\end{equation}

\begin{lemma}
Let $\alpha = 4k^2 + 12k + 5$, $\beta= 3k^2 + 8k + 6$ and $\delta= 3k^4 + 17k^3 + 34 k^2 + 28k + 8$ as in Proposition~\ref{prop:kMarkovFibAndPell} and let $A:= \frac{2k^2 + 3k + 1 + (1+k)\sqrt{\alpha}}{2(1+2k)\sqrt{\alpha}}$, $B:=\frac{(k+1)(k+2)(\beta^2-4)+\delta\sqrt{\beta^2-4}}{(\beta-2)(\beta^2-4)}$.
Then, for any two coprime positive integers $a_1$ and $a_2$, we have  
\begin{equation}
S(a_1,a_2)_-= ((3+3k)A)^{-a_1}\bigg(\frac{3+2k+\sqrt{\alpha}}{2}\bigg)^{a_2}
\end{equation}
and
\begin{equation}\label{eq:s+}
S(a_1,a_2)_+=((3+3k)B)^{a_1 + a_2} \bigg(\frac{\beta + \sqrt{\beta^2-4}}{2}\bigg)^{-a_1}. 
\end{equation}

\end{lemma}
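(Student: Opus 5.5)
The plan is to make both limits explicit by writing numerator and denominator in closed form, using Proposition~\ref{prop:RecurrenceAlongy=q/px} for points that are multiples of a primitive vector and Proposition~\ref{prop:kMarkovFibAndPell} for points that are themselves primitive. Set $\lambda := \frac{3+2k+\sqrt{\alpha}}{2}$ and $\mu := \frac{\beta+\sqrt{\beta^2-4}}{2}$. Proposition~\ref{prop:kMarkovFibAndPell} then gives $f_n \sim A\lambda^n$ and $g_n \sim B\mu^n$.

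First I would record one auxiliary asymptotic. Let $(p,q)$ be primitive with $m := m^{(k)}_{\fqp}$, and let $N \geq 1$ be a \emph{fixed} multiplier. The closed formula in Proposition~\ref{prop:RecurrenceAlongy=q/px}, with $\eta = (3+3k)m - k$, says that $m^{(k)}_{(Np,Nq)}$ equals $\frac{m}{\sqrt{\eta^2-4}}$ times $\bigl(\frac{\eta+\sqrt{\eta^2-4}}{2}\bigr)^N - \bigl(\frac{\eta-\sqrt{\eta^2-4}}{2}\bigr)^N$. As $m\to\infty$ with $N$ fixed, three facts hold:
\begin{itemize}
\item the second power tends to $0$;
\item $\frac{\eta+\sqrt{\eta^2-4}}{2}\sim \eta \sim (3+3k)m$;
\item $\sqrt{\eta^2-4}\sim (3+3k)m$.
\end{itemize}
Together these give $m^{(k)}_{(Np,Nq)} \sim (3+3k)^{N-1} m^N$. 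This is the step needing the most care: the base $\eta$ grows with $q$, so one must not use the large-$n$ asymptotic. Instead, expand the exact expression for fixed $N$ and check that each correction term is of lower order in $m$. The recurrence viewpoint $m_{N+1} = \eta m_N - m_{N-1}$ also gives this directly by induction on $N$.

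For $S(a_1,a_2)_-$, the numerator point $(q(1+a_1)+a_2,1)$ is primitive, so its value is $m^{(k)}_{1/n}$ with $n=q(1+a_1)+a_2$, namely $f_n \sim A\lambda^{q(1+a_1)+a_2}$. The denominator point is $(1+a_1)\cdot(q,1)$, where $(q,1)$ is primitive with value $f_q$. The auxiliary asymptotic with $N=1+a_1$ gives a denominator $\sim (3+3k)^{a_1} f_q^{1+a_1} \sim (3+3k)^{a_1}A^{1+a_1}\lambda^{q(1+a_1)}$. Dividing, the $\lambda^{q(1+a_1)}$ factors cancel and leave $((3+3k)A)^{-a_1}\lambda^{a_2}$, as claimed.

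For $S(a_1,a_2)_+$, put $c = 1+a_1+a_2$. The numerator point is $c\cdot(q+1,q)$, where $(q+1,q)$ is primitive with value $g_q$. The auxiliary asymptotic gives a numerator $\sim (3+3k)^{c-1} g_q^{c} \sim (3+3k)^{c-1}B^c\mu^{qc}$. The denominator point is $(n+1,n)$ with $n = qc+a_1$; it is primitive, so its value is $g_n \sim B\mu^{qc+a_1}$. The ratio tends to $((3+3k)B)^{c-1}\mu^{-a_1} = ((3+3k)B)^{a_1+a_2}\mu^{-a_1}$, which is \eqref{eq:s+}.

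The constant terms $\frac{k}{1+2k}$ and $\frac{k}{3k+2}$, and the decaying exponential terms in the closed formulas, are negligible in every ratio. Coprimality of $a_1,a_2$ plays no role beyond making the statement well-posed.
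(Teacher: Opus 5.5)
Your proposal is correct and follows essentially the same route as the paper. The non-primitive points are handled via the closed form in Proposition~\ref{prop:RecurrenceAlongy=q/px}, which gives $m^{(k)}_{(Np,Nq)} \sim (3+3k)^{N-1}\bigl(m^{(k)}_{q/p}\bigr)^{N}$ for fixed $N$, and this is combined with the asymptotics $f_n \sim A\lambda^n$ and $g_n \sim B\mu^n$ from Proposition~\ref{prop:kMarkovFibAndPell}. The only difference is that you write out the $S(a_1,a_2)_+$ computation explicitly, which the paper leaves as ``analogous.''
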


\begin{proof}
We first look at $S(a_1,a_2)_-$. By Proposition~\ref{prop:kMarkovFibAndPell}, we have \[
\mk_{(q(1+a_1)+a_2,1)} \sim A \bigg(\frac{3 + 2k + \sqrt{\alpha}}{2}\bigg)^{q(1+a_1)+a_2}.
\]
Next, $\mk_{(q(1+a_1),1+a_1)}$ is the $(1+a_1)$-th term in the sequence $\mk_{\frac1q} = \mk_{(q,1)},\mk_{(2q,2)},\ldots$. By Proposition~\ref{prop:RecurrenceAlongy=q/px}, if $\eta = (3+3k)\mk_{\frac1q}-k$, we have 
\[
\mk_{(q(1+a_1),1+a_1)} = \frac{\mk_{\frac1q}}{\sqrt{\eta^2-4}} \bigg(\frac{\eta + \sqrt{\eta^2 - 4}}{2}\bigg)^{1+a_1} - \frac{\mk_{\frac1q}}{\sqrt{\eta^2-4}} \bigg(\frac{\eta - \sqrt{\eta^2 - 4}}{2}\bigg)^{1+a_1} 
\]
Now, as a function in positive integers $q$, we have 
\[
\lim_{q \to \infty} 
\frac{
    \frac{\mk_{\frac1q}}{\sqrt{\eta^2-4}} \left(\frac{\eta - \sqrt{\eta^2 - 4}}{2}\right)^{1+a_1}
}{
    \frac{\mk_{\frac1q}}{\sqrt{\eta^2-4}} \left(\frac{\eta + \sqrt{\eta^2 - 4}}{2}\right)^{1+a_1}
} = 0
\]

so that \[
\mk_{(q(1+a_1),1+a_1)} \sim \frac{\mk_{\frac1q}}{\sqrt{\eta^2-4}} \bigg(\frac{\eta + \sqrt{\eta^2 - 4}}{2}\bigg)^{1+a_1} \sim \frac{\mk_{\frac1q} \eta^{1+a_1}}{\sqrt{\eta^2-4}} \sim (3+3k)^{a_1} (\mk_{\frac1q})^{1+a_1}
\]
and we conclude using Proposition~\ref{prop:kMarkovFibAndPell} again \[
\mk_{(q(1+a_1),1+a_1)} \sim (3+3k)^{a_1} A^{a_1+1} \bigg(\frac{3 + 2k + \sqrt{\alpha}}{2}\bigg)^{q(1+a_1)}.
\]
At this point, the final statement follows readily.

The identity~\eqref{eq:s+} for $S(a_1,a_2)_+$ can be proved by an analogous argument.
\end{proof}

\section{Monotonicity of the generalized $k$-Markov numbers}\label{sec:MainResult}

\subsection{Monotonicity along horizontal and vertical lines}

In this subsection, we study the monotonicity of the generalized $k$-Markov numbers along horizontal lines $\ell: y = b$ and vertical lines $\ell: x = n$, where $b, n \in \mathbb{Z}_{>0}$.  This extends the main result of the first author \cite{banaian2025} to generalized $k$-Markov numbers. 

For any $(p,q) \in \mathbb{Z}^2 $ such that $p \geq 0, q \geq 0$ and $(p,q) \neq (0,0)$,  
 we define the horizontal and vertical ratios
\[
    h(p,q) := \frac{m^{(k)}_{(p+1,q)}}{m^{(k)}_{(p,q)}}, \qquad 
    v(p,q) := \frac{m^{(k)}_{(p,q+1)}}{m^{(k)}_{(p,q)}}.
\]

\begin{lemma}\label{lem:hv1}
If $(p,q) \in \mathbb{Z}^2 $ is such that $p \geq 0$ and  $q > 0$, then
\[
    h(p,q) = \frac{m^{(k)}_{(p+1,q)}}{m^{(k)}_{(p,q)}} 
    < 
    \frac{m^{(k)}_{(p+2,q)}}{m^{(k)}_{(p+1,q)}} = h(p+1,q).
\]
and if $(p,q) \in \mathbb{Z}^2 $ is such that $p > 0$ and  $q \geq 0$, then
\[
    v(p,q) = \frac{m^{(k)}_{(p,q+1)}}{m^{(k)}_{(p,q)}} 
    < 
    \frac{m^{(k)}_{(p,q+2)}}{m^{(k)}_{(p,q+1)}} = v(p,q+1).
\]
\end{lemma}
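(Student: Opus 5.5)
The inequality $h(p,q)<h(p+1,q)$ is equivalent to the log-convexity statement
\[
\bigl(m^{(k)}_{(p+1,q)}\bigr)^2 < m^{(k)}_{(p,q)}\, m^{(k)}_{(p+2,q)},
\]
and I would derive it from the generalized Ptolemy relation, Proposition~\ref{prop:ptolemy}. Take the four points
\[
A=(0,0),\qquad B=(1,0),\qquad C=(p+2,q),\qquad D=(p+1,q).
\]
Because $q>0$, these four points are the vertices of a parallelogram with vertices in the cyclic order $A,B,C,D$. The sides are $\overline{AB}$ and $\overline{DC}$ (both translates of $(1,0)$) and $\overline{BC}$ and $\overline{AD}$ (both translates of $(p+1,q)$). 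When $p\ge 0$ and $q>0$ this is a genuine nondegenerate convex quadrilateral, so condition (a) applies. Its diagonals are $\overline{AC}$ and $\overline{BD}$.

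Using translation invariance (Definition~\ref{def:PosetBetweenAnyTwoPoints}) together with Lemma~\ref{lem:SwapAandB}, the relevant distances are:
\begin{itemize}
\item $|AC|_k=m^{(k)}_{(p+2,q)}$;
\item $|BD|_k=m^{(k)}_{(p,q)}$;
\item $|AB|_k=|CD|_k=m^{(k)}_{(1,0)}=1$;
\item $|AD|_k=|BC|_k=m^{(k)}_{(p+1,q)}$.
\end{itemize}
Proposition~\ref{prop:ptolemy} then gives
\[
m^{(k)}_{(p+2,q)}\,m^{(k)}_{(p,q)}\ \ge\ 1+\bigl(m^{(k)}_{(p+1,q)}\bigr)^2 .
\]
This is strict because of the additive term $1$. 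Dividing by $m^{(k)}_{(p,q)}m^{(k)}_{(p+1,q)}>0$ yields $h(p,q)<h(p+1,q)$. Positivity holds because $(p,q)\ne(0,0)$, so the poset is nonempty, and order-ideal counts are always at least $1$.

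The vertical statement is symmetric. Take
\[
A=(0,0),\qquad B=(0,1),\qquad C=(p,q+2),\qquad D=(p,q+1),
\]
with $p>0$. This is again a nondegenerate parallelogram, since $(0,1)$ and $(p,q+1)$ are not parallel when $p>0$. The same computation gives
\[
m^{(k)}_{(p,q+2)}\,m^{(k)}_{(p,q)}\ \ge\ 1+\bigl(m^{(k)}_{(p,q+1)}\bigr)^2 ,
\]
using $|AB|_k=m^{(k)}_{(0,1)}=1$. One must check this value: $\gamma_{(0,1)}$ runs along a lattice line of slope $\infty$ and crosses no segment transversally, so its poset is empty and it has exactly one order ideal. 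The same check applies to $m^{(k)}_{(1,0)}=1$, which is also consistent with the initial conditions in Lemma~\ref{lem:Slope0and1}.

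The main point to verify carefully is that the hypotheses of Proposition~\ref{prop:ptolemy}(a) are genuinely met, including in the boundary cases. When $p=0$ in the horizontal statement, $D=(1,q)$, which is still fine. I also need the vertex labeling to match the intended pairing, with $\overline{AC},\overline{BD}$ as diagonals and $\overline{AB}\cdot\overline{CD}$, $\overline{AD}\cdot\overline{BC}$ as the pairs of opposite sides.

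A second subtlety is that the endpoints may have non-coprime coordinates. The Ptolemy relation is stated for arbitrary integral points using the $\gamma^L$ convention, so this causes no problem. If some configuration with nonprimitive sides nonetheless caused trouble, I would fall back to the skein relation of Proposition~\ref{prop:Type1Resolution} combined with Proposition~\ref{prop:Straight}, exactly as in the proof of part (b).
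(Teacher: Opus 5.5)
Your proposal is correct and is essentially the paper's proof. The paper applies Proposition~\ref{prop:ptolemy}(a) to the same parallelogram with the labels of $B$ and $D$ swapped ($A=(0,0)$, $B=(p+1,q)$, $C=(p+2,q)$, $D=(1,0)$, and analogously in the vertical case), and it gets strictness from the positivity of the extra product, which you make explicit as $m^{(k)}_{(1,0)}m^{(k)}_{(1,0)}=1$. One small aside: your claim that the poset of $(p,q)\neq(0,0)$ is nonempty fails for $(0,1)$, but you do not need it, since every poset has at least one order ideal.
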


\begin{proof}
 Consider the points $A = (0,0)$, $B = (p+1,q)$, $C = (p+2,q)$, and $D = (1,0)$ where $(p,q)$ is as in the statement.  
Since $q > 0$, these four points form a convex quadrilateral satisfying condition~(a) of Proposition~\ref{prop:ptolemy}, and from this Proposition we have
\[
    |AC|_k \cdot |BD|_k > |AB|_k \cdot |CD|_k,
\]
since all terms are positive and $\vert AD \vert_k \cdot \vert BC \vert_k > 0$. 
Using the identities
\[
    |AC|_k = m^{(k)}_{(p+2,q)}, \quad 
    |BD|_k = m^{(k)}_{(p,q)}, \quad 
    |AB|_k = |CD|_k = m^{(k)}_{(p+1,q)},
\]
we can rearrange our inequality to find
\[
    \frac{m^{(k)}_{(p+1,q)}}{m^{(k)}_{(p,q)}} 
    < 
    \frac{m^{(k)}_{(p+2,q)}}{m^{(k)}_{(p+1,q)}},
\]
which is exactly the inequality $h(p,q) < h(p+1,q)$.

For the second inequality, set $A = (0,0)$, $B = (p,q+1)$, $C = (p,q+2)$, and $D = (0,1)$.  
Again, since $p > 0$, these points form a convex quadrilateral satisfying condition~(a) of Proposition~\ref{prop:ptolemy}.  
The Ptolemy inequality gives
\[
    |AC|_k \cdot |BD|_k > |AB|_k \cdot |CD|_k,
\]
and with
\[
    |AC|_k = m^{(k)}_{(p,q+2)}, \quad 
    |BD|_k = m^{(k)}_{(p,q)}, \quad 
    |AB|_k = |CD|_k = m^{(k)}_{(p,q+1)},
\]
we deduce
\[
    \frac{m^{(k)}_{(p,q+1)}}{m^{(k)}_{(p,q)}} 
    < 
    \frac{m^{(k)}_{(p,q+2)}}{m^{(k)}_{(p,q+1)}},
\]
i.e., $v(p,q) < v(p,q+1)$, as required.
\end{proof}

Notice that if we considered the case $q = 0$ in part (1) of Lemma~\ref{lem:hv1}, then the statement would be false. For example, if $k = 0$, so that $m_{(n,0)} = F_{2n}$, i.e. the $(2n)$-th Fibonacci number, we have the well-known identity $F_{2n+2}F_{2n-2} - F_{2n}^2 = -1$, implying  the $h(p,0)$ sequence decreases when $k =0 $.

\begin{lemma}\label{lem:hv2}
If $(p,q) \in \mathbb{Z}^2 $ is such that $p \geq 0, q \geq 0$ and $(p,q) \neq (0,0)$, then
\[
    m^{(k)}_{(p+1,q)} \, m^{(k)}_{(p,q+1)}
    > 
    m^{(k)}_{(p,q)} \, m^{(k)}_{(p+1,q+1)}.
\]
Consequently,
\[
    h(p,q) = \frac{m^{(k)}_{(p+1,q)}}{m^{(k)}_{(p,q)}}
    > 
    \frac{m^{(k)}_{(p+1,q+1)}}{m^{(k)}_{(p,q+1)}} = h(p,q+1),
\]
and
\[
    v(p,q) = \frac{m^{(k)}_{(p,q+1)}}{m^{(k)}_{(p,q)}} 
    > 
    \frac{m^{(k)}_{(p+1,q+1)}}{m^{(k)}_{(p+1,q)}} = v(p+1,q).
\]
\end{lemma}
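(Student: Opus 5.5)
We will apply the generalized Ptolemy inequality, Proposition~\ref{prop:ptolemy}, to a quadrilateral whose diagonals realize the product $m^{(k)}_{(p+1,q)}m^{(k)}_{(p,q+1)}$ and whose sides realize $m^{(k)}_{(p,q)}m^{(k)}_{(p+1,q+1)}$. This is the same method as in Lemma~\ref{lem:hv1}. Take
\[
A=(0,0),\qquad B=(p,q+1),\qquad C=(p+1,q+1),\qquad D=(1,0).
\]
The distances then read off by translation (Definition~\ref{def:PosetBetweenAnyTwoPoints}, Lemma~\ref{lem:SwapAandB}). The diagonals are $|AC|_k=m^{(k)}_{(p+1,q+1)}$ and $|BD|_k=m^{(k)}_{(p-1,q+1)}$, and the sides are $|AB|_k=m^{(k)}_{(p,q+1)}$, $|CD|_k=m^{(k)}_{(p,q+1)}$, $|AD|_k=m^{(k)}_{(1,0)}=1$ and $|BC|_k=m^{(k)}_{(1,0)}=1$.

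This choice does not give what we want, so we change the labels. We want the diagonals to give $(p+1,q)$ and $(p,q+1)$, and one pair of opposite sides to give $(p,q)$ and $(p+1,q+1)$. Take
\[
A=(0,0),\qquad B=(p,q),\qquad C=(p+1,q+1),\qquad D=(1,1)\ \text{or}\ (0,1).
\]
Trying $A=(0,0)$, $C=(p+1,q)$, $B=(1,0)$, $D=(p,q+1)$: then $AC$ corresponds to $(p+1,q)$ and $BD$ corresponds to $(p-1,q+1)$, which again fails. The correct choice is
\[
A=(0,0),\qquad C=(p+1,q),\qquad B=(0,1),\qquad D=(p+1,q+1).
\]
With these points we get $AC\leftrightarrow(p+1,q)$, $BD\leftrightarrow(p+1,q)$, $AB\leftrightarrow(0,1)$, $CD\leftrightarrow(0,1)$, $AD\leftrightarrow(p+1,q+1)$ and $BC\leftrightarrow(p+1,q-1)$. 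This also fails.

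We instead aim at the correct shape directly. We need two segments with difference vectors $(p+1,q)$ and $(p,q+1)$ that cross, and sides with differences $(p,q)$ and $(p+1,q+1)$. Take
\[
A=(0,0),\qquad B=(p,q),\qquad C=(p+1,q+1),\qquad D=(1,1).
\]
This has the wrong diagonals. Next take the quadrilateral in the order
\[
A=(0,0),\qquad B=(1,0),\qquad C=(p+1,q+1),\qquad D=(p,q+1)
\]
(trapezoid-like). Its diagonals are $AC\leftrightarrow(p+1,q+1)$ and $BD\leftrightarrow(p-1,q+1)$, so this fails too.

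The right configuration is
\[
A=(0,1),\qquad B=(0,0),\qquad C=(p,q),\qquad D=(p+1,q).
\]
Order the vertices as $B,C,D$ with $A=(0,1)$, and consider the quadrilateral $A,B,D,C'$ with $C'=(p,q+1)$. Then $AD\leftrightarrow(p+1,q-1)$, which fails again.

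The difference vectors we need are $(p+1,q)$ and $(p,q+1)$, which differ by $(1,-1)$. So the diagonals should start at points differing by $(1,-1)$ or end at the same point. Take
\[
A=(1,0),\qquad B=(0,1),\qquad C=(p+1,q+1),
\]
so that $|AC|_k=m^{(k)}_{(p,q+1)}$ and $|BC|_k=m^{(k)}_{(p+1,q)}$. For the other side we want $|AD|_k=m^{(k)}_{(p,q)}$ and $|BD|_k=m^{(k)}_{(p+1,q+1)}$, which would need $D$ with $D-A=(p,q)$ and $D-B=\pm(p+1,q+1)$. This forces $D=(p+1,q)$, and then $D-B=(p+1,q-1)$, so it does not work. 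Use $D=(0,0)$ instead. Then $|AD|_k=1$, $|BD|_k=1$ and $|AB|_k=m^{(k)}_{(1,-1)}$.

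We now place $O=(0,0)$, $P=(1,0)$, $Q=(0,1)$ and $X=(p+1,q+1)$. The quadrilateral $O,P,X,Q$ is convex when $p,q\ge 0$ and $(p,q)\neq(0,0)$. Its diagonals are $OX\leftrightarrow(p+1,q+1)$ and $PQ\leftrightarrow(1,-1)$; its sides are $OP$, $OQ$ (both distance $1$), $PX\leftrightarrow(p,q+1)$ and $QX\leftrightarrow(p+1,q)$. Proposition~\ref{prop:ptolemy}(a) gives
\[
m^{(k)}_{(p+1,q+1)}\,|PQ|_k\ \ge\ m^{(k)}_{(p,q+1)}+m^{(k)}_{(p+1,q)}.
\]
This is an inequality in the wrong direction for our purpose, so instead we would use a quadrilateral with vertices $O=(0,0)$, $Y=(p,q)$, $X=(p+1,q+1)$ and the two unit offsets. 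Concretely, take
\[
A=(0,0),\qquad B=(1,0),\qquad C=(p+1,q+1),\qquad D=(p,q+1).
\]
Then $AC\leftrightarrow(p+1,q+1)$ and $BD\leftrightarrow(p-1,q+1)$, which is unsuitable.

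\textbf{Plan.} The workable route is to choose points so that the crossing diagonals are $(0,0)\to(p+1,q)$ and $(1,0)\to$ a point differing by $(p,q+1)$, namely $(p+1,q+1)$. These segments cross because both slopes lie between those of the boundary rays. The quadrilateral is then
\[
A=(0,0),\qquad B=(1,0),\qquad C=(p+1,q),\qquad D=(p+1,q+1),
\]
taken in the order $A,B,C,D$. It is convex for $q\ge 0$; degenerate collinear cases such as $q=0$ are covered by Proposition~\ref{prop:ptolemy}(b). The Ptolemy inequality reads
\[
|AC|_k\,|BD|_k\ \ge\ |AB|_k\,|CD|_k+|AD|_k\,|BC|_k,
\]
where $|AC|_k=m^{(k)}_{(p+1,q)}$, $|BD|_k=m^{(k)}_{(p,q+1)}$, $|AB|_k=|CD|_k=1$, $|AD|_k=m^{(k)}_{(p+1,q+1)}$ and $|BC|_k=m^{(k)}_{(p,q)}$. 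Therefore
\[
m^{(k)}_{(p+1,q)}\,m^{(k)}_{(p,q+1)}\ \ge\ 1+m^{(k)}_{(p,q)}\,m^{(k)}_{(p+1,q+1)},
\]
which gives the strict inequality.

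The two ratio statements then follow by dividing this inequality by $m^{(k)}_{(p,q)}\,m^{(k)}_{(p,q+1)}$ and by $m^{(k)}_{(p,q)}\,m^{(k)}_{(p+1,q)}$ respectively; both divisors are positive since the points involved are not $(0,0)$.

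\textbf{Main obstacle.} The delicate step is verifying which case of Proposition~\ref{prop:ptolemy} applies in the boundary situations. When $q=0$, the points $A,B,C$ are collinear, and the configuration must be matched to case (b) with the correct labeling. The case $p=0$ also needs checking to confirm that convexity holds.
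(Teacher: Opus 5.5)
Your final configuration $A=(0,0)$, $B=(1,0)$, $C=(p+1,q)$, $D=(p+1,q+1)$ is correct and is essentially the paper's proof. It is the point reflection through $(\tfrac{p+1}{2},\tfrac{q+1}{2})$ of the paper's quadrilateral $(0,0),(p+1,q+1),(p,q+1),(0,1)$, i.e.\ Ptolemy applied with two unit sides, diagonals $(p+1,q)$, $(p,q+1)$ and remaining sides $(p,q)$, $(p+1,q+1)$. On the boundary, $q=0$ is case (b) after relabeling ($B$ lies on $\overline{AC}$), while for $p=0$ the quadrilateral is not convex, but $C$ lies on $\overline{BD}$ with $A,B,D$ non-collinear, so case (b) applies verbatim (and the abandoned attempts before your ``Plan'' should simply be cut).
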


\begin{proof}
Let $A = (0,0)$, $B = (p+1,q+1)$, $C = (p,q+1)$, and $D = (0,1)$.  
These points satisfy condition~(i) (in case $q>0$) or (ii) (in case $q=0$) of Proposition~\ref{prop:ptolemy}.  
Applying the Ptolemy inequality yields
\[
    |AC|_k \cdot |BD|_k > |AB|_k \cdot |CD|_k.
\]

Using the identifications
\[
    |AC|_k = m^{(k)}_{(p,q+1)}, \quad 
    |BD|_k = m^{(k)}_{(p+1,q)}, \quad 
    |AB|_k = m^{(k)}_{(p+1,q+1)}, \quad 
    |CD|_k = m^{(k)}_{(p,q)},
\]
we obtain the desired inequality
\[
    m^{(k)}_{(p+1,q)} \, m^{(k)}_{(p,q+1)}
    > 
    m^{(k)}_{(p,q)} \, m^{(k)}_{(p+1,q+1)}.
    \]
Dividing both sides appropriately yields the stated comparisons for $h(p,q)$ and $v(p,q)$.
\end{proof}

\begin{cor}\label{cor:hv_bounds}
For every $(p,q) \in \mathcal{R}$, the following bounds hold:
\begin{equation}\label{eq:hv}
    h(q,q) \leq h(p,q) \leq h(p,1), \qquad 
    v(p,0) \leq v(p,q) \leq v(p,p-1).
\end{equation}
In particular, for all $(p,q) \in \mathcal{R}$, $h(p,q) > 1$ and $v(p,q) > 1$. 
\end{cor}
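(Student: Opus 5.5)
The plan is to chain the monotonicity statements of Lemma~\ref{lem:hv1} and Lemma~\ref{lem:hv2} along a path from $(q,q)$ (resp. $(p,0)$) to $(p,q)$, and then along a path from $(p,q)$ to $(p,1)$ (resp. $(p,p-1)$). Fix $(p,q)\in\mathcal{R}$, so $p>q>0$ (if $q=0$ is allowed, the $h$-bounds below are read in the obvious degenerate way).

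For the horizontal ratio, I would first apply Lemma~\ref{lem:hv1} repeatedly along the horizontal line $y=q$. Since $q>0$, it gives $h(q,q)<h(q+1,q)<\cdots<h(p,q)$, which is the lower bound. For the upper bound, I would apply Lemma~\ref{lem:hv2} repeatedly in the vertical direction at fixed first coordinate $p$. The inequality $h(p,j)>h(p,j+1)$ holds for every $j\ge 0$, so chaining from $j=1$ to $j=q-1$ gives $h(p,q)\le h(p,1)$, with equality exactly when $q=1$.

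The vertical ratio is handled symmetrically. Lemma~\ref{lem:hv1} along the vertical line $x=p$ (valid since $p>0$) gives $v(p,0)<v(p,1)<\cdots<v(p,q)$. Then Lemma~\ref{lem:hv2}, which gives $v(j,q)>v(j+1,q)$, chained from $j=q+1$ up to $j=p-1$, yields $v(p,q)\ge v(p-1,q)$, but this runs in the wrong direction. To reach $v(p,p-1)$, I would instead increase the second coordinate at fixed $x=p$ via Lemma~\ref{lem:hv1}: for $q\le p-1$ it gives $v(p,q)\le v(p,p-1)$. Together with the lower bound this proves \eqref{eq:hv}.

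For positivity, it suffices to show $h(q,q)>1$ and $v(p,0)>1$. By Lemma~\ref{lem:Slope0and1}(1), $m^{(k)}_{(n,0)}$ satisfies $m_n=(2k+3)m_{n-1}-m_{n-2}$ with $m_0=0$ and $m_1=1$. An induction then gives $m_n-m_{n-1}\ge (2k+1)m_{n-1}+(m_{n-1}-m_{n-2})>0$, so $v(p,0)=m^{(k)}_{(p,1)}/m^{(k)}_{(p,0)}$ can be compared via Proposition~\ref{prop:Intertwine}: $m^{(k)}_{(p,1)}=(k+1)m^{(k)}_{(p,0)}+m^{(k)}_{(p-1,1)}>m^{(k)}_{(p,0)}$. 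Likewise, Proposition~\ref{prop:Intertwine} gives $m^{(k)}_{(q+1,q)}=2(k+1)m^{(k)}_{(q,q)}+m^{(k)}_{(q,q-1)}>m^{(k)}_{(q,q)}$, so $h(q,q)>1$. The main care needed is in handling the boundary conditions of each lemma (which of $p,q$ may be zero) and the degenerate case $q=0$; these are bookkeeping issues rather than genuine obstacles.
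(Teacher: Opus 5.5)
Your proposal is correct and follows the paper's proof. You chain Lemma~\ref{lem:hv1} (for $h$ in $p$, and for $v$ in $q$, which also gives the upper bound $v(p,q)\le v(p,p-1)$) with Lemma~\ref{lem:hv2} (for $h$ in $q$), then get $h(q,q)>1$ and $v(p,0)>1$ from Proposition~\ref{prop:Intertwine}; your chains actually run in the right direction, whereas the chains written in the paper's proof are reversed.

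Two asides are wrong, though neither is used. Lemma~\ref{lem:hv2} gives $v(p-1,q)>v(p,q)$, not $v(p,q)\ge v(p-1,q)$. The case $q=0$ cannot be read ``degenerately'': $h(0,0)$ is undefined and Lemma~\ref{lem:hv2} gives $h(p,0)>h(p,1)$, so the corollary genuinely needs $q\ge 1$, as in the definition of $\mathcal{R}$.
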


\begin{proof}
The inequalities in \eqref{eq:hv} follow by iteratively applying Lemmas~\ref{lem:hv1} and~\ref{lem:hv2}. Specifically, we do the following.

\begin{itemize}
    \item Fixing $q$, we can use Lemma~\ref{lem:hv1} to show $h(p,q) < h(p-1,q) < \cdots < h(q,q)$. Similarly fixing $p$ we can use Lemma~\ref{lem:hv2} to show $h(p,q) > h(p,q-1)  > \cdots > h(p,1)$. 
    \item A similar argument applies to $v(p,q)$, yielding $v(p,0) \leq v(p,q) \leq v(p,p-1)$.
\end{itemize}

To establish the strict lower bounds, we observe that by Lemma~\ref{prop:Intertwine} (see also \cite{banaian2025}), the boundary terms satisfy
\[
    h(q,q) = \frac{m^{(k)}_{(q+1,q)}}{m^{(k)}_{(q,q)}} > 1 
    \quad \text{and} \quad 
    v(p,0) = \frac{m^{(k)}_{(p,1)}}{m^{(k)}_{(p,0)}} > 1.
\] 
Combining these strict inequalities with the bounds in \eqref{eq:hv} immediately yields $h(p,q) > 1$ and $v(p,q) > 1$ for all $(p,q)\in \mathcal{R}$. 
\end{proof}

The following proposition is the main result of this subsection.

\begin{prop}\label{prop:monotonicity_hv}
Let $\ell$ be a horizontal line $y = b$ or a vertical line $x = n$, where $b,n \in \mathbb{Z}_{\geq 0}$.  
Then the generalized $k$-Markov numbers are strictly increasing along $\ell$ with respect to the enumeration of points in $\ell \cap \mathcal{R}$.
\end{prop}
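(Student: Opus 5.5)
The plan is to read Proposition~\ref{prop:monotonicity_hv} off Corollary~\ref{cor:hv_bounds}, which already contains essentially all the content. Consecutive points of $\ell \cap \mathcal{R}$ differ by one unit step, so strict increase is equivalent to the relevant one-step ratio exceeding $1$.

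First consider a horizontal line $y = b$. Its integral points in the region are $(p,b)$ with $p > b$. Moving from $(p,b)$ to $(p+1,b)$ multiplies $m^{(k)}$ by $h(p,b)$, and $(p,b) \in \mathcal{R}$ whenever $p > b$. Corollary~\ref{cor:hv_bounds} gives $h(p,b) > 1$, so $m^{(k)}_{(p+1,b)} > m^{(k)}_{(p,b)}$ along the whole line.

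The case $b = 0$ deserves care. Depending on whether one uses the closed region $\{p \geq q \geq 0\}$ of the main theorem, points $(p,0)$ or the diagonal point $(b,b)$ may be included. For $(p,0)$, Lemma~\ref{lem:Slope0and1}(1) gives $m^{(k)}_{(n,0)} = (2k+3)m^{(k)}_{(n-1,0)} - m^{(k)}_{(n-2,0)}$ with $m_{(0,0)} = 0$ and $m_{(1,0)} = 1$. An induction then shows $m_{(n,0)} - m_{(n-1,0)} \geq (2k+1)\,m_{(n-1,0)} + (m_{(n-1,0)} - m_{(n-2,0)}) > 0$. For the step from $(b,b)$ to $(b+1,b)$, I would use $h(b,b) > 1$, which comes from Proposition~\ref{prop:Intertwine}: $m_{(b+1,b)} = 2(k+1)m_{(b,b)} + m_{(b,b-1)} > m_{(b,b)}$.

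Vertical lines $x = n$ are handled the same way. The points are $(n,q)$ with $0 \leq q < n$ (or $0 < q < n$), and the step from $(n,q)$ to $(n,q+1)$ multiplies $m^{(k)}$ by $v(n,q)$.

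For $(n,q) \in \mathcal{R}$, Corollary~\ref{cor:hv_bounds} gives $v(n,q) > 1$. The boundary step from $(n,0)$ to $(n,1)$ follows from Proposition~\ref{prop:Intertwine}: $m_{(n,1)} = (k+1)m_{(n,0)} + m_{(n-1,1)} > m_{(n,0)}$, since $m_{(n-1,1)} \geq 0$. The final step into the diagonal point $(n,n)$, if it is included, is again the inequality $m_{(n,n)} < m_{(n,n-1)}$? This needs checking, because the diagonal lies on the boundary. Lemma~\ref{lem:hv1} says the ratios $v(n,\cdot)$ increase, so $v(n,n-1) > v(n,n-2) > 1$ once $n \geq 2$, and this should settle it.

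The only real obstacle is this bookkeeping at the boundary of $\mathcal{R}$, namely which endpoints are included and whether the bounds in Corollary~\ref{cor:hv_bounds} cover them. I would handle it by appealing directly to Proposition~\ref{prop:Intertwine}, Lemma~\ref{lem:Slope0and1} and Lemma~\ref{lem:hv1}, as above.
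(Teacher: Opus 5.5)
Your proposal is correct and follows the paper's own argument. Consecutive points along $y=b$ and $x=n$ differ by unit steps with ratios $h(p,b)$ and $v(n,q)$, and Corollary~\ref{cor:hv_bounds} gives $h,v>1$; your boundary bookkeeping via Lemma~\ref{lem:Slope0and1} and Proposition~\ref{prop:Intertwine} matches the remarks the paper makes around the proposition. There are two small slips to fix: the diagonal step you need is $m^{(k)}_{(n,n)} > m^{(k)}_{(n,n-1)}$, i.e.\ $v(n,n-1)>1$, not the reversed inequality you wrote; and for $k=0$, strictness of $m^{(k)}_{(n,1)} > m^{(k)}_{(n,0)}$ requires $m^{(k)}_{(n-1,1)} \geq 1$ rather than $\geq 0$.
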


\begin{proof}
The ratio of consecutive points on the line $y = b$, i.e.  $(p,b)$ and $(p+1,b)$, is $h(p,b)$, and by Corollary~\ref{cor:hv_bounds}, the sequence is strictly increasing. The same can be said for points along the line $x = n$, using the fact that $v(n,q) > 1$ for all $(n,q) \in \mathcal{R}$ with $q < n$.
\end{proof}

One can also use Lemma~\ref{lem:Slope0and1} to see that the sequence $m^{(k)}_{(p,0)}$ is increasing.
The following corollary is immediate from Proposition~\ref{prop:monotonicity_hv}. 

\begin{cor}\label{cor:positiveslope}
Let $\ell: y = ax + b$ be a line with rational slope and intercept, i.e., $a, b \in \mathbb{Q}$.  
If $a > 0$, then the generalized $k$-Markov numbers are strictly increasing as functions of the $x$-coordinate along $\ell \cap \mathcal{R}$.
\end{cor}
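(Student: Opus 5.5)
Corollary~\ref{cor:positiveslope} will follow by breaking a step along $\ell$ into a horizontal move and a vertical move and applying Proposition~\ref{prop:monotonicity_hv} to each. Write $a=\frac{r}{s}$ with $r,s$ positive coprime integers. Any two consecutive integral points of $\ell$ then have the form $(p,q)$ and $(p+s,q+r)$. It suffices to show that whenever both points lie in $\mathcal{R}$, we have
\[
m^{(k)}_{(p,q)}<m^{(k)}_{(p+s,q+r)} .
\]
If $\ell\cap\mathcal{R}$ contains non-consecutive integral points of $\ell$, those in between also lie in $\mathcal{R}$, since $\mathcal{R}$ is the set of integral points of a convex region. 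So comparing consecutive points is enough.

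The plan is to join $(p,q)$ to $(p+s,q+r)$ by a lattice path that stays in $\mathcal{R}$ and uses only unit steps to the right or up. Each such step strictly increases the generalized $k$-Markov number by Proposition~\ref{prop:monotonicity_hv}, or equivalently by Corollary~\ref{cor:hv_bounds}, which gives $h(p',q')>1$ and $v(p',q')>1$ for $(p',q')\in\mathcal{R}$. Chaining these strict inequalities gives the claim.

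The one point to check is that such a path exists inside $\mathcal{R}$, and the step ordering handles it. First take the $s$ horizontal steps, from $(p,q)$ to $(p+s,q)$. Every intermediate point $(p',q)$ has $p'\ge p>q$, so it lies in $\mathcal{R}$, and each step uses $h(p',q)>1$. Then take the $r$ vertical steps, from $(p+s,q)$ to $(p+s,q+r)$. Every intermediate point $(p+s,q')$ has $q'\le q+r<p+s$, because the endpoint lies in $\mathcal{R}$, so it lies in $\mathcal{R}$ too. Each step uses $v(p+s,q')>1$; Corollary~\ref{cor:hv_bounds} requires $(p+s,q')\in\mathcal{R}$ at the base of the step, which holds since $q'<q+r$ there.

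If the paper's $\mathcal{R}$ includes the boundary ($q=0$ or $p=q$), the boundary steps need separate care. Lemma~\ref{lem:Slope0and1} shows that $m^{(k)}_{(n,0)}$ is increasing, and Proposition~\ref{prop:Intertwine} gives $m^{(k)}_{(n,1)}>m^{(k)}_{(n,0)}$, which handles a vertical step leaving the $x$-axis. The only potential obstacle is this boundary bookkeeping. I expect it to be routine, since the horizontal-then-vertical ordering keeps every step away from the diagonal except possibly the last vertical step.
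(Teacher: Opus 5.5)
Your proposal is correct and matches the paper's argument: the paper simply records the corollary as immediate from Proposition~\ref{prop:monotonicity_hv}. Your decomposition into horizontal steps followed by vertical steps, through the corner $(p+s,q)$ that stays in $\mathcal{R}$ precisely because the horizontal steps come first, is the natural way to unpack that remark. The boundary bookkeeping is indeed routine, including a starting point on the diagonal, where the needed inequality $h(q,q)>1$ is established via Proposition~\ref{prop:Intertwine} in the proof of Corollary~\ref{cor:hv_bounds}.
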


One can also use Lemma~\ref{lem:Slope0and1} to see that the sequence $m^{(k)}_{(p,p)}$ is increasing.

\subsection{Monotonicity along arbitrary lines}

In this section, we focus on characterizing all lines $\ell: y = ax + b$  with $a,b \in \mathbb{Q}$ along which the $k$-Markov numbers are monotonic.
By Proposition~\ref{prop:monotonicity_hv} (covering horizontal lines) and Corollary~\ref{cor:positiveslope} (which establishes strict increase for lines with positive slope), it suffices to consider the case $a < 0$.  
Moreover, we may assume $b > 0$, since otherwise $\ell \cap \mathcal{R} = \emptyset$.  

We can therefore write $a = -\frac{a_1}{a_2}$ for some coprime positive integers $a_1, a_2 \in \mathbb{Z}_{>0}$.  
Because $a < 0$, the intersection $\ell \cap \mathcal{R}$ is finite; denote its cardinality by $n(\ell) := |\ell \cap \mathcal{R}|.$

Throughout this subsection, we order the points of $\ell \cap \mathcal{R}$ using their $x$-coordinates. That is, set
\[
\mathbf{p}_1 = (p_1, q_1),\ 
\mathbf{p}_2 = (p_2, q_2),\ 
\dots,\ 
\mathbf{p}_{n(\ell)} = (p_{n(\ell)}, q_{n(\ell)}),
\]
so that $p_1 < p_2 < \cdots < p_{n(\ell)}$.

Since the slope $a$ is rational and $\mathcal{R} \subset \mathbb{Z}^2$, the set $\ell \cap \mathcal{R}$ forms an arithmetic progression in the integer lattice. Specifically, for any three consecutive points  
\[
\mathbf{p}_i = (p_i, q_i),\quad 
\mathbf{p}_{i+1} = (p_{i+1}, q_{i+1}),\quad 
\mathbf{p}_{i+2} = (p_{i+2}, q_{i+2}) ,
\]
with $1 \leq i \leq n(\ell) - 2$, we have 
\[
p_{i+1} = p_i + a_2,\quad q_{i+1} = q_i - a_1,
\qquad
p_{i+2} = p_{i+1} + a_2,\quad q_{i+2} = q_{i+1} - a_1.
\]

For each index $i$ with $1 \leq i < n(\ell)$, we define the \textbf{$i$-th ratio of the generalized $k$-Markov numbers along $\ell$} by
\[
\operatorname{Ratio}_i(\ell) := \frac{m^{(k)}_{\mathbf{p}_{i+1}}}{m^{(k)}_{\mathbf{p}_i}}.
\]
We will be particularly interested in the \textbf{first} and \textbf{last ratios}:
\begin{equation}\label{eq:extremal_ratios}
\operatorname{Ratio}_{\mathrm{first}}(\ell) := \operatorname{Ratio}_{1}(\ell),
\qquad
\operatorname{Ratio}_{\mathrm{last}}(\ell) := \operatorname{Ratio}_{\,n(\ell)-1}(\ell).
\end{equation}

\begin{prop}\label{prop:negative_slope}
Let $\ell: y = ax + b$ be a line with rational slope and intercept, where $a = -\dfrac{a_1}{a_2}$ for coprime positive integers $a_1, a_2 \in \mathbb{Z}_{>0}$.  
Then for every index $i$ with $1 \leq i \leq n(\ell) - 2$, we have
\[
\operatorname{Ratio}_i(\ell) < \operatorname{Ratio}_{i+1}(\ell).
\]
Consequently, the sequence of generalized $k$-Markov numbers along $\ell \cap \mathcal{R}$ is strictly increasing (respectively, strictly decreasing) as a function of the $x$-coordinate if and only if 
\[
\operatorname{Ratio}_{\mathrm{first}}(\ell) > 1 \quad \text{(respectively, } \operatorname{Ratio}_{\mathrm{last}}(\ell) < 1\text{)}.
\]
\end{prop}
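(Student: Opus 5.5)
The claim $\operatorname{Ratio}_i(\ell) < \operatorname{Ratio}_{i+1}(\ell)$ is equivalent to the log-convexity inequality
\[
m^{(k)}_{\mathbf{p}_{i+1}}\cdot m^{(k)}_{\mathbf{p}_{i+1}} < m^{(k)}_{\mathbf{p}_i}\cdot m^{(k)}_{\mathbf{p}_{i+2}}.
\]
I plan to derive it from the generalized Ptolemy relation (Proposition~\ref{prop:ptolemy}), exactly as in Lemma~\ref{lem:hv1}. The idea is to pick a quadrilateral in which two opposite sides have length $m^{(k)}_{\mathbf{p}_{i+1}}$ and the diagonals have lengths $m^{(k)}_{\mathbf{p}_i}$ and $m^{(k)}_{\mathbf{p}_{i+2}}$. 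Translation invariance of $\vert\cdot\vert_k$ (Definition~\ref{def:PosetBetweenAnyTwoPoints}) lets us realize these distances.

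Concretely, set $d=(a_2,-a_1)$, so $\mathbf{p}_{i+1}=\mathbf{p}_i+d$ and $\mathbf{p}_{i+2}=\mathbf{p}_i+2d$. Take
\[
A=(0,0),\qquad B=\mathbf{p}_{i+1},\qquad C=\mathbf{p}_{i+2},\qquad D=d.
\]
The distances are then:
\begin{itemize}
\item $\vert AC\vert_k=m^{(k)}_{\mathbf{p}_{i+2}}$;
\item $\vert BD\vert_k=m^{(k)}_{\mathbf{p}_{i+1}-d}=m^{(k)}_{\mathbf{p}_i}$;
\item $\vert AB\vert_k=m^{(k)}_{\mathbf{p}_{i+1}}$;
\item $\vert CD\vert_k=m^{(k)}_{\mathbf{p}_{i+2}-d}=m^{(k)}_{\mathbf{p}_{i+1}}$;
\item $\vert AD\vert_k=\vert BC\vert_k=m^{(k)}_{(a_2,-a_1)}$, which is positive.
\end{itemize}
Proposition~\ref{prop:ptolemy} gives $\vert AC\vert_k\vert BD\vert_k\ge \vert AB\vert_k\vert CD\vert_k+\vert AD\vert_k\vert BC\vert_k$. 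Since the last term is strictly positive, we get the strict inequality $m^{(k)}_{\mathbf{p}_i}m^{(k)}_{\mathbf{p}_{i+2}}>(m^{(k)}_{\mathbf{p}_{i+1}})^2$, which is the desired ratio comparison.

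The main step is checking that $A,B,C,D$ satisfy one of conditions (a)–(c). The points $A,D$ lie on the line through the origin with direction $d$, and $B,C$ lie on the parallel line $\ell$. So $ADCB$ is a parallelogram-like trapezoid: $AD$ and $BC$ are parallel translates, both equal to the vector $d$. In that case it is a genuine parallelogram, with vertices in cyclic order $A,D,C,B$, and is therefore convex. Its diagonals are $AC$ and $DB$ and its sides are $AD,DC,CB,BA$, which matches condition (a) after relabeling.

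The one degenerate situation is when $\ell$ passes through the origin, so that all four points are collinear. This happens only if $b=0$. That case is excluded here, since we assume $b>0$ (else $\ell\cap\mathcal{R}=\emptyset$). With $b>0$, the points $\mathbf{p}_i\in\mathcal R$ are not on the line $\mathbb{R}d$, so the parallelogram is nondegenerate.

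I expect the only real care needed is confirming that the left-biased poset conventions and $m^{(k)}$ for points outside $\mathcal R$, such as $(a_2,-a_1)$, cause no trouble. Proposition~\ref{prop:ptolemy} is stated for arbitrary lattice points, so this should be fine. Positivity holds because an order-ideal count is always at least $1$.

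For the consequence, the ratios are strictly increasing in $i$. The sequence is strictly increasing exactly when every ratio exceeds $1$, and since the ratios increase in $i$, this is equivalent to the smallest one, $\operatorname{Ratio}_{\mathrm{first}}(\ell)$, exceeding $1$. Likewise, the sequence is strictly decreasing exactly when every ratio is below $1$, which is equivalent to the largest one, $\operatorname{Ratio}_{\mathrm{last}}(\ell)$, being below $1$.
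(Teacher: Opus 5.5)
Your proposal is correct and is essentially the paper's proof. You use the same choice $A=(0,0)$, $B=\mathbf{p}_{i+1}$, $C=\mathbf{p}_{i+2}$, $D=(a_2,-a_1)$, apply condition (a) of Proposition~\ref{prop:ptolemy}, and get strictness from the positive term $\vert AD\vert_k\vert BC\vert_k$. Your checks that $ADCB$ is a nondegenerate parallelogram (since $b\neq 0$) and your argument for the ``consequently'' clause supply details the paper leaves implicit.
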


\begin{proof}
Set $A = O = (0,0)$, $B = \mathbf{p}_{i+1}$, $C = \mathbf{p}_{i+2}$, and $D = (a_2,-a_1)$.
These four points form a convex quadrilateral satisfying condition~(a) of Proposition~\ref{prop:ptolemy}. Applying the Ptolemy inequality yields
\[
|AC|_k \cdot |BD|_k > |AB|_k \cdot |CD|_k.
\]

Using the identifications
\[
|AC|_k = m^{(k)}_{\mathbf{p}_{i+2}}, \quad 
|BD|_k = m^{(k)}_{\mathbf{p}_i}, \quad 
|AB|_k = |CD|_k = m^{(k)}_{\mathbf{p}_{i+1}},
\]
we obtain
\[
m^{(k)}_{\mathbf{p}_{i+2}} \, m^{(k)}_{\mathbf{p}_i} > \bigl(m^{(k)}_{\mathbf{p}_{i+1}}\bigr)^2.
\]
Dividing both sides by $m^{(k)}_{\mathbf{p}_i} \, m^{(k)}_{\mathbf{p}_{i+1}} > 0$, we deduce
\[
\frac{m^{(k)}_{\mathbf{p}_{i+2}}}{m^{(k)}_{\mathbf{p}_{i+1}}} 
> 
\frac{m^{(k)}_{\mathbf{p}_{i+1}}}{m^{(k)}_{\mathbf{p}_{i}}},
\]
which is precisely $\operatorname{Ratio}_{i+1}(\ell) > \operatorname{Ratio}_{i}(\ell)$, as claimed.
\end{proof}

\begin{lemma}\label{lem:ratio_comparison}
Let $\mathbf{p}_1 = (p_1, q_1)$, $\mathbf{p}_2 = (p_2, q_2)$, 
$\mathbf{p}'_1 = (p'_1, q'_1)$, and $\mathbf{p}'_2 = (p'_2, q'_2)$
be four points in $\mathcal{R}$ such that
\[
p_1 < p_2,\quad q_1 > q_2,\qquad 
p'_1 < p'_2,\quad q'_1 > q'_2,\qquad \text{and}\quad p_2 < p'_2.
\]
Assume further that $\mathbf{p}_1, \mathbf{p}_2, \mathbf{p}'_2, \mathbf{p}'_1$ form a parallelogram; equivalently,
$\mathbf{p}_2 - \mathbf{p}_1 = \mathbf{p}'_2 - \mathbf{p}'_1.$
Then the following hold:

\begin{enumerate}
    \item[$(1)$] If the segments $\overline{O\mathbf{p}'_2}$ and $\overline{O'\mathbf{p}_2}$ intersect in their interiors or at $\mathbf p_2$, where $O = (0,0)$ and $O' = (p_2 - p_1,\, q_2 - q_1)$ (for example, when the common slope of $\overline{\mathbf{p}_1\mathbf{p}'_1}$ and $\overline{\mathbf{p}_2\mathbf{p}'_2}$ is $0$), then
    \[
        \frac{m^{(k)}_{\mathbf{p}_2}}{m^{(k)}_{\mathbf{p}_1}} 
        < 
        \frac{m^{(k)}_{\mathbf{p}'_2}}{m^{(k)}_{\mathbf{p}'_1}}.
    \]

    \item[$(2)$]  If the segments $\overline{O\mathbf{p}_2}$ and $\overline{O'\mathbf{p}'_2}$ intersect in their interiors or at $\mathbf p_2$, where $O = (0,0)$ and $O' = (p_2 - p_1,\, q_2 - q_1)$ (for example, when the common slope of $\overline{\mathbf{p}_1\mathbf{p}'_1}$ and $\overline{\mathbf{p}_2\mathbf{p}'_2}$ is $1$), then
    \[
        \frac{m^{(k)}_{\mathbf{p}_2}}{m^{(k)}_{\mathbf{p}_1}} 
        >
        \frac{m^{(k)}_{\mathbf{p}'_2}}{m^{(k)}_{\mathbf{p}'_1}}.
    \]
\end{enumerate}
\end{lemma}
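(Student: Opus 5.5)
Both parts reduce to a single application of the generalized Ptolemy inequality, Proposition~\ref{prop:ptolemy}, for four well-chosen points, in the same way as Lemma~\ref{lem:hv1}, Lemma~\ref{lem:hv2} and Proposition~\ref{prop:negative_slope}. Write $\mathbf{w} := \mathbf{p}_2 - \mathbf{p}_1 = \mathbf{p}'_2 - \mathbf{p}'_1$, so that $O' = \mathbf{w}$ and, by translation invariance (Definition~\ref{def:PosetBetweenAnyTwoPoints}) together with Lemma~\ref{lem:SwapAandB},
\[
|O\mathbf{p}_2|_k = m^{(k)}_{\mathbf{p}_2},\quad |O'\mathbf{p}_2|_k = m^{(k)}_{\mathbf{p}_1},\quad |O\mathbf{p}'_2|_k = m^{(k)}_{\mathbf{p}'_2},\quad |O'\mathbf{p}'_2|_k = m^{(k)}_{\mathbf{p}'_1}.
\]
The desired inequality in (1) is $m^{(k)}_{\mathbf{p}_2} m^{(k)}_{\mathbf{p}'_1} < m^{(k)}_{\mathbf{p}'_2} m^{(k)}_{\mathbf{p}_1}$, i.e.\ $|O\mathbf{p}_2|_k |O'\mathbf{p}'_2|_k < |O\mathbf{p}'_2|_k |O'\mathbf{p}_2|_k$. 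So the products of ``crossing'' pairs should dominate, and the hypothesis of (1) says exactly that $\overline{O\mathbf{p}'_2}$ and $\overline{O'\mathbf{p}_2}$ cross.

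For (1), I take $A=O$, $C=\mathbf{p}'_2$, $B=O'$ (or $\mathbf{p}_2$, depending on orientation), $D=\mathbf{p}_2$, so the diagonals of the quadrilateral are $\overline{O\mathbf{p}'_2}$ and $\overline{O'\mathbf{p}_2}$. If these segments meet in their interiors, the four points form a convex quadrilateral with these diagonals, and condition (a) gives
\[
|O\mathbf{p}'_2|_k\,|O'\mathbf{p}_2|_k \ge |O O'|_k\,|\mathbf{p}_2\mathbf{p}'_2|_k + |O\mathbf{p}_2|_k\,|O'\mathbf{p}'_2|_k .
\]
If the intersection is at $\mathbf{p}_2$, then $\mathbf{p}_2$ lies on $\overline{O\mathbf{p}'_2}$, and condition (b) applies with the non-collinear triangle $O, O', \mathbf{p}'_2$ and the point $\mathbf{p}_2$ on the side $\overline{O\mathbf{p}'_2}$; after relabeling, this yields the same inequality. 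The term $|OO'|_k\,|\mathbf{p}_2\mathbf{p}'_2|_k$ is strictly positive because $O \ne O'$ and $\mathbf{p}_2 \ne \mathbf{p}'_2$, and every $k$-Markov distance between distinct points is at least $1$, since the poset is nonempty and has at least two order ideals. Dropping this term gives strict inequality, and dividing yields (1).

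Part (2) is symmetric. Here the crossing diagonals are $\overline{O\mathbf{p}_2}$ and $\overline{O'\mathbf{p}'_2}$, so Proposition~\ref{prop:ptolemy}(a) (or (b) when they meet at $\mathbf{p}_2$, which then lies on $\overline{O'\mathbf{p}'_2}$) gives $|O\mathbf{p}_2|_k |O'\mathbf{p}'_2|_k > |O\mathbf{p}'_2|_k|O'\mathbf{p}_2|_k$. This is the reversed inequality.

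The step I expect to need the most care is the geometric bookkeeping. I must verify that when the specified segments cross in their interiors, the four points $O, O', \mathbf{p}_2, \mathbf{p}'_2$ are in convex position with exactly those segments as diagonals; this holds because two segments crossing at an interior point always span a convex quadrilateral. In the boundary case I must check non-collinearity of the relevant triangle, using that $\mathbf{w}$ is not parallel to $\mathbf{p}'_2-\mathbf{p}_2$. For the parenthetical examples, I would check directly that slope $0$ (respectively $1$) of $\overline{\mathbf{p}_2\mathbf{p}'_2}$, together with $\mathbf{p}_2,\mathbf{p}'_2\in\mathcal R$ and $\mathbf{w}$ having positive $x$- and negative $y$-component, forces the stated crossing configuration.
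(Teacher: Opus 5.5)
Your proposal is correct and is essentially the paper's proof: the paper also applies Proposition~\ref{prop:ptolemy}(a) or (b) to $O,\mathbf{p}_2,\mathbf{p}'_2,O'$. It uses diagonals $\overline{O\mathbf{p}'_2},\overline{O'\mathbf{p}_2}$ in (1) and $\overline{O\mathbf{p}_2},\overline{O'\mathbf{p}'_2}$ in (2), and drops the positive term $|OO'|_k\,|\mathbf{p}_2\mathbf{p}'_2|_k$. One small fix: that term is positive because the empty set is always an order ideal, not because the posets are nonempty. For instance, $|\mathbf{p}_2\mathbf{p}'_2|_k = m^{(k)}_{(1,0)} = 1$ comes from an empty poset.
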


\begin{proof}
(1). Let $A=O=(0,0), B={\bf p}_2, C={\bf p}'_2$ and $D=O'=(p_2-p_1,q_2-q_1)$. These points satisfy condition~(a) or condition~(b) of Proposition~\ref{prop:ptolemy}.  
Applying the Ptolemy inequality yields
\[
    |AC|_k \cdot |BD|_k > |AB|_k \cdot |CD|_k.
\]

Using the identifications
\[
    |AC|_k = m^{(k)}_{{\bf p}'_2}, \quad 
    |BD|_k = m^{(k)}_{{\bf p}_1}, \quad 
    |AB|_k = m^{(k)}_{{\bf p}_2}, \quad 
    |CD|_k = m^{(k)}_{{\bf p}'_1},
\]
we obtain the desired inequality
\[
    m^{(k)}_{{\bf p}'_2} \, m^{(k)}_{{\bf p}_1} 
    > 
   m^{(k)}_{{\bf p}_2} \, m^{(k)}_{{\bf p}'_1},
\]
implying 
\[
    \frac{m^{(k)}_{{\bf p}_2}}{m^{(k)}_{{\bf p}_1}} 
    < 
    \frac{m^{(k)}_{{\bf p}'_2}}{m^{(k)}_{{\bf p}'_1}}.
    \]

(2). Let $A=O=(0,0), B=\mathbf{p}_2', C=\mathbf{p}_2, D=O'=(p_2-p_1,q_2-q_1)$. It follows that the points $A,B,C,D$ satisfy condition~(a) or condition~(b) of Proposition~\ref{prop:ptolemy}.  
Applying the Ptolemy inequality yields
\[
    |AC|_k \cdot |BD|_k > |AB|_k \cdot |CD|_k.
\]

Using the identifications
\[
    |AC|_k = m^{(k)}_{\mathbf{p}_2}, \quad 
    |BD|_k = m^{(k)}_{\mathbf{p}'_1}, \quad 
    |AB|_k = m^{(k)}_{\mathbf{p}'_2}, \quad 
    |CD|_k = m^{(k)}_{\mathbf{p}_1},
\]
we obtain the desired inequality
\[
    m^{(k)}_{{\bf p}_2} \, m^{(k)}_{{\bf p}'_1} 
    > 
   m^{(k)}_{{\bf p}'_2} \, m^{(k)}_{{\bf p}_1},
\]
implying
\[
    \frac{m^{(k)}_{{\bf p}_2}}{m^{(k)}_{{\bf p}_1}} 
    >
    \frac{m^{(k)}_{{\bf p}'_2}}{m^{(k)}_{{\bf p}'_1}}.
    \]
\end{proof}

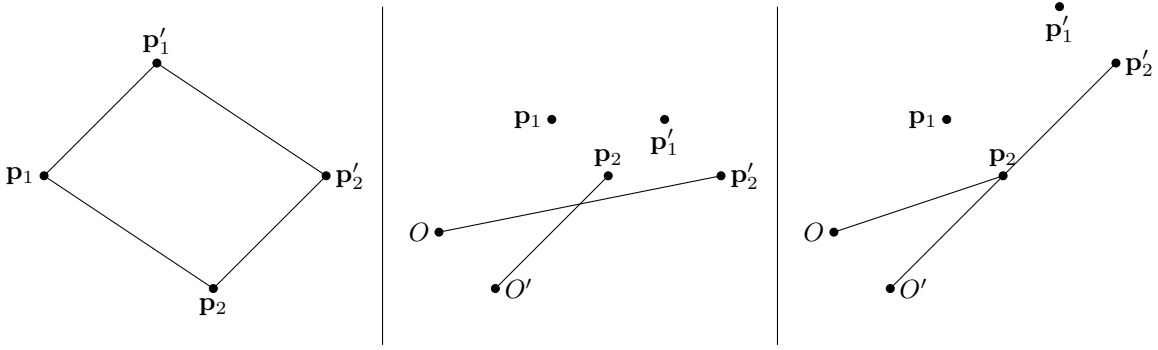
\begin{figure}
    \centering
\begin{tikzpicture}[scale=0.75]
\draw[fill = black] (1,1) circle [radius=2pt];
\node[ left] at (1,1){${\bf p}_1$};
\draw[fill = black] (4,-1) circle [radius=2pt];
\node[below] at (4,-1){${\bf p}_2$};
\draw[fill = black] (3,3) circle [radius=2pt];
\node[above] at (3,3){${\bf p}'_1$};
\draw[fill = black] (6,1) circle [radius=2pt];
\node[right] at (6,1){${\bf p}'_2$};
\draw(1,1) -- (4,-1)--(6,1) -- (3,3) -- (1,1); 
\draw(7,4) -- (7,-2);
\draw[fill = black] (10,2) circle [radius=2pt];
\node[ left] at (10,2){${\bf p}_1$};
\draw[fill = black] (12,2) circle [radius=2pt];
\node[below] at (12,2){${\bf p}'_1$};
\draw[fill = black] (11,1) circle [radius=2pt];
\node[above] at (11,1){${\bf p}_2$};
\draw[fill = black] (13,1) circle [radius=2pt];
\node[right] at (13,1){${\bf p}'_2$};
\draw[fill = black] (8,0) circle [radius=2pt];
\node[left] at (8,0){$O$};
\draw[fill = black] (9,-1) circle [radius=2pt];
\node[right] at (9,-1){$O'$};
\draw(8,0) -- (13,1);
\draw(9,-1) -- (11,1);
\draw(14,4) -- (14,-2);
\draw[fill = black] (17,2) circle [radius=2pt];
\node[ left] at (17,2){${\bf p}_1$};
\draw[fill = black] (19,4) circle [radius=2pt];
\node[below] at (19,4){${\bf p}'_1$};
\draw[fill = black] (18,1) circle [radius=2pt];
\node[above] at (18,1){${\bf p}_2$};
\draw[fill = black] (20,3) circle [radius=2pt];
\node[right] at (20,3){${\bf p}'_2$};
\draw[fill = black] (15,0) circle [radius=2pt];
\node[left] at (15,0){$O$};
\draw[fill = black] (16,-1) circle [radius=2pt];
\node[right] at (16,-1){$O'$};
\draw(15,0) -- (18,1);
\draw(16,-1) -- (20,3);
\end{tikzpicture}
    \caption{Left: A generic example of points ${\bf p}_1, \bf{p}'_1,\bf{p}_2,$ and $\bf{p}'_2$ as in Lemma ~\ref{lem:ratio_comparison}; Middle: Four points such that $\overline{O{\bf p}'_2}$ and $\overline{O'\bf{p}_2}$ intersect; Right: Four points such that $\overline{O{\bf p}_2}$ and $\overline{O'\bf{p}'_2}$ intersect}
    \label{fig:FiguresForComparisonLemma}
\end{figure}

Given two coprime positive integers $a_1$ and $a_2$, and $n \geq 1+a_1+a_2$, let ${\bf p}_1=(n-a_2,1+a_1), {\bf p}_2=(n,1), {\bf p}'_1=(n+1-a_2,1+a_1)$ and ${\bf p}'_2=(n+1,1)$. The line segments $\overline{O{\bf p}'_2}$ and $\overline{O'{\bf p}_2} $ intersect in their interiors, where $O = (0,0)$ and $O' = (a_2,\,-a_1)$.  
By Lemma~\ref{lem:ratio_comparison} (1), it follows that the sequence
$$\left\{ \frac{m^{(k)}_{(n,\,1)}}{m^{(k)}_{(n - a_2,\,1 + a_1)}} \;\middle|\; n \geq 1 + a_1 + a_2 \right\}
$$
is strictly increasing. In particular, this sequence contains the subsequence
$$
\left\{ \frac{m^{(k)}_{(q(1 + a_1) + a_2,\,1)}}{m^{(k)}_{(q(1 + a_1),\,1 + a_1)}} \;\middle|\; q \in \mathbb{Z}_{>0} \right\}.
$$

Recall the definition of $S_\pm(a_1,a_2)$ from~ \eqref{eq:a_pm_def}. Thus, we see thus

\begin{equation}\label{eq:limits-}
\lim_{n\to \infty}\frac{m^{(k)}_{\bigl(n,\; 1\bigr)}}{m^{(k)}_{\bigl(n-a_2,\; 1+a_1\bigr)}}=S(a_1,a_2)_-.
\end{equation}

Similarly, by setting  ${\bf p}_1=(n+1,n), {\bf p}_2=(n+1+a_2,n-a_1), {\bf p}'_1=(n+2,n+1)$ and ${\bf p}'_2=(n+1+a_2,n+1-a_1)$ and using Lemma~\ref{lem:ratio_comparison} (2), we have that the sequence
 
$$\left\{ \frac{m^{(k)}_{(n+1+a_2,\,n-a_1)}}{m^{(k)}_{(n+1,\,n)}} \;\middle|\; n \geq a_1 \right\}
$$
is strictly increasing. In particular, it contains the subsequence
$$
\left\{  \frac{m^{(k)}_{\bigl((q+1)(1+a_1+a_2),\; q(1+a_1+a_2)\bigr)}}{m^{(k)}_{\bigl(q(1+a_1+a_2)+a_1+1,\; q(1+a_1+a_2)+a_1\bigr)}} \;\middle|\; q \in \mathbb{Z}_{>0} \right\}.
$$

so we again have that the whole sequence converges, i.e., 

\begin{equation}\label{eq:limits+}
\lim_{n\to \infty}\frac{m^{(k)}_{(n+1+a_2,\,n-a_1)}}{m^{(k)}_{(n+1,\,n)}} =S(a_1,a_2)_+.
\end{equation}

Recall $\ell: y = ax + b$  denotes a line with rational slope and intercept, where  $ a = -\dfrac{a_1}{a_2} $  for coprime positive integers  $ a_1, a_2 \in \mathbb{Z}_{>0} $.  For any  $ n \in \mathbb{Z}_{>0} $ , define the following two shifted lines:
\begin{equation*}
    \ell[n]:\ y = a(x - n) + b, \qquad 
    \ell\langle n\rangle:\ y - n = a(x - n) + b.
\end{equation*}

Suppose that $n(\ell)=|\ell\cap \mathcal R|\geq 2$. With our indexing of the points on $\ell \cap \mathcal{R}$,  $\mathbf{p}_1 = (p_1, q_1) $  and  $\mathbf{p}_2=(p_2, q_2)$  are the leftmost two points of  $\ell \cap \mathcal{R}$  (i.e., those with smallest  $ x $ -coordinates). For convenience, let $ \widetilde{\mathbf{p}}_1 = (\widetilde{p}_1, \widetilde{q}_1) $  and  $ \widetilde{\mathbf{p}}_2 = (\widetilde{p}_2, \widetilde{q}_2) $  be the rightmost two points of  $ \ell \cap \mathcal{R} $  (i.e., those with largest  $ x $ -coordinates). That is, $\widetilde{\mathbf{p}}_1 = \mathbf{p}_{n(\ell) - 1}$ and  $\widetilde{\mathbf{p}}_2 = \mathbf{p}_{n(\ell)}$ .

Since the lines $\ell[n]$  and $\ell \langle n \rangle $ are the result of translating $\ell$ in a direction parallel to one of the boundary lines of $\mathcal{R}$, we have the following.
\begin{itemize}
   \item The points 
    $\widetilde{\mathbf{p}}_1[n] := (\widetilde{p}_1 + n,\, \widetilde{q}_1), \quad 
    \widetilde{\mathbf{p}}_2[n] := (\widetilde{p}_2 + n,\, \widetilde{q}_2)$ are the rightmost two points of  $ \ell[n] \cap \mathcal{R} $.
    \item The points 
    $\mathbf{p}_1\langle n\rangle := (p_1 + n,\, q_1 + n),  \quad
    \mathbf{p}_2\langle n\rangle := (p_2 + n,\, q_2 + n)$
are the leftmost two points of  $ \ell\langle n\rangle \cap \mathcal{R}$.
\end{itemize}

\begin{prop}\label{prop:firstlastratios}
Let $\ell:y=ax+b$ be a line with rational slope and intercept, where  $a=-\dfrac{a_1}{a_2}$ for coprime positive integers  $a_1,a_2 \in \mathbb{Z}_{>0}$. Suppose that $n(\ell)=|\ell\cap \mathcal R|\geq 2$. 
\begin{enumerate}
    \item[$(1)$] The sequence $\{\operatorname{Ratio}_{\mathrm{last}}(\ell[ n])\mid n\geq 0\}$ is strictly increasing and 
    \begin{equation}\label{eq:ratiolim1}
      \lim_{n\to \infty} \operatorname{Ratio}_{\mathrm{last}}(\ell[n])=S_-(a_1,a_2). 
    \end{equation}
     \item[$(2)$] The sequence $\{\operatorname{Ratio}_{\mathrm{first}}(\ell\langle n\rangle)\mid n\geq 0\}$ is strictly decreasing and 
    \begin{equation}\label{eq:ratiolim2}
      \lim_{n\to \infty} \operatorname{Ratio}_{\mathrm{first}}(\ell\langle n\rangle)=S_+(a_1,a_2). 
    \end{equation}
\end{enumerate}
  
\end{prop}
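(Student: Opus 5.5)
The plan is to reduce both parts to the two monotone sequences established just before the statement, via Lemma~\ref{lem:ratio_comparison}, and then read off the limits from \eqref{eq:limits-} and \eqref{eq:limits+}.

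\textbf{Part (1).} First I identify the last two points of $\ell[n]\cap\mathcal R$ as $\widetilde{\mathbf p}_1[n]=(\widetilde p_1+n,\widetilde q_1)$ and $\widetilde{\mathbf p}_2[n]=(\widetilde p_2+n,\widetilde q_2)$, so that
\[
\operatorname{Ratio}_{\mathrm{last}}(\ell[n])=\frac{m^{(k)}_{(\widetilde p_2+n,\widetilde q_2)}}{m^{(k)}_{(\widetilde p_1+n,\widetilde q_1)}}.
\]
For monotonicity I apply Lemma~\ref{lem:ratio_comparison}(1) with $\mathbf p_1=\widetilde{\mathbf p}_1[n]$, $\mathbf p_2=\widetilde{\mathbf p}_2[n]$, $\mathbf p_1'=\widetilde{\mathbf p}_1[n+1]$ and $\mathbf p_2'=\widetilde{\mathbf p}_2[n+1]$. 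The translation between the two pairs is horizontal, which is exactly the slope-$0$ case the lemma flags as guaranteeing the required intersection of $\overline{O\mathbf p_2'}$ with $\overline{O'\mathbf p_2}$. So the ratios strictly increase in $n$.

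For the limit, two cases arise.
\begin{itemize}
\item If $\widetilde q_2=1$, then $\widetilde q_1=1+a_1$ and $\widetilde p_1=\widetilde p_2-a_2$. The sequence is then a tail of $m^{(k)}_{(N,1)}/m^{(k)}_{(N-a_2,1+a_1)}$, and \eqref{eq:limits-} gives the limit $S(a_1,a_2)_-$.
\item If $\widetilde q_2=c\neq 1$ (for instance $c=0$ or $c>1$), I use a sandwich via the horizontal-ratio bounds. Lemma~\ref{lem:ratio_comparison}(1) compares ratios only under horizontal translation, so instead I use Lemma~\ref{lem:hv2}-type monotonicity in $q$ (the ratio moves monotonically as both points shift vertically by one). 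This bounds the sequence between the $\widetilde q_2=1$ sequence and another explicitly computable one.
\end{itemize}

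The cleaner route for the second case, which I would try first, is to compute the limit directly by asymptotics. For fixed heights $c$ and $c+a_1$, Proposition~\ref{prop:RecurrenceAlongy=q/px}-type growth along horizontal lines, together with Proposition~\ref{prop:Intertwine}-style recurrences, should give $m^{(k)}_{(N,c)}\sim C_c\,\lambda^N$ with $\lambda=\frac{3+2k+\sqrt{\alpha}}2$. Here the ratio $C_{c+a_1}/C_c$ is forced by multiplicativity of the leading behavior, with $C_{c+1}/C_c\to(3+3k)A$ computed from $\mathcal P_{(N,c)}$ splitting into fence pieces. This yields the limit $\lambda^{a_2}((3+3k)A)^{-a_1}=S(a_1,a_2)_-$, independent of $c$.

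\textbf{Part (2).} This is symmetric. The first two points of $\ell\langle n\rangle$ are the diagonal translates $\mathbf p_i\langle n\rangle$, and Lemma~\ref{lem:ratio_comparison}(2), with common slope $1$, gives strict decrease. When $p_1=q_1+1$, the sequence is a tail of the sequence in \eqref{eq:limits+}, so the limit is $S(a_1,a_2)_+$. The general offset is handled by the same asymptotic argument, now with growth rate $\frac{\beta+\sqrt{\beta^2-4}}2$ along lines parallel to $y=x$ and factor $(3+3k)B$ per unit of distance from the diagonal.

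The main obstacle is the limit when the extreme points are not adjacent to the boundary ($\widetilde q_2\neq1$, respectively $p_1-q_1\neq1$). Monotonicity plus Lemma~\ref{lem:ratio_comparison} only identifies limits along the boundary-adjacent family, so one needs uniform asymptotics for $m^{(k)}_{(N,c)}$ at fixed $c$ (and for fixed distance from the diagonal). I would derive these from the continued-fraction form in Lemma~\ref{lem:CountOrderIdeals}: for large $N$ the poset is periodic with a bounded number of defects, so the ratios of leading constants factor.
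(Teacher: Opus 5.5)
Your monotonicity arguments and the boundary-adjacent case ($\widetilde q_2=1$, resp.\ $p_1=q_1+1$) match the paper. The limit in the remaining case ($\widetilde q_2>1$, resp.\ $p_1-q_1>1$), which you flag as the main obstacle, is not established. The obstacle comes from a misreading of Lemma~\ref{lem:ratio_comparison}. Part (1) of that lemma does not require a horizontal translation. It needs only that $\mathbf p_2-\mathbf p_1=\mathbf p_2'-\mathbf p_1'$ and that $\overline{O\mathbf p_2'}$ meets $\overline{O'\mathbf p_2}$; slope $0$ is just an example.

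Your vertical-shift sandwich yields only one side. Lemmas~\ref{lem:hv1} and~\ref{lem:hv2} give $v(N,j)<v(N-a_2,j)<v(N-a_2,j+a_1)$, so the ratio at height $c$ is below the ratio at height $1$ with the same $x$-coordinate, whence $\limsup\le S(a_1,a_2)_-$. The ``other explicitly computable'' bound is never identified, and that lower bound is the real content.

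Your asymptotic fallback is a heuristic, not a proof. You would need to show three things:
\begin{itemize}
\item $m^{(k)}_{(N,c)}/\lambda^N$ converges at all. The local structure of $\mathcal P_{(N,c)}$ near each horizontal crossing depends on $N$ modulo $c$, and when $\gcd(N,c)>1$ the curve passes through lattice points and picks up extra $(3k+3)$-chains.
\item The limit is independent of that residue.
\item Its ratio satisfies $C_{c+1}/C_c=(3+3k)A$.
\end{itemize}
None of this is automatic. At $c=0$, which you list as a case, Lemma~\ref{lem:Slope0and1} gives $m^{(k)}_{(N,0)}\sim\lambda^N/\sqrt{\alpha}$, while Proposition~\ref{prop:kMarkovFibAndPell} gives $m^{(k)}_{(N,1)}\sim A\lambda^N$. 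So $C_1/C_0=A\sqrt{\alpha}\neq(3+3k)A$, and your claimed $c$-independence of the limit would be false there. That case is excluded only because $\mathcal R$ in Section~\ref{sec:Definition} requires $q>0$.

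The missing idea is to apply Lemma~\ref{lem:ratio_comparison}(1) to pairs that are \emph{not} horizontal translates of each other.
\begin{itemize}
\item \emph{Upper bound.} Choose $N(\ell)$ with $N(\ell)-1<-\frac{b-1}{a}<N(\ell)$. Then $\ell$ lies strictly between the lines of slope $a$ through $(N(\ell)-1,1)$ and $B'=(N(\ell),1)$. Since $1<\widetilde q_2<1+a_1$, the point $\widetilde{\mathbf p}_2$ lies inside the parallelogram these lines cut from the strip $1\le y\le 1+a_1$, so $\overline{OB'}$ crosses $\overline{O'\widetilde{\mathbf p}_2}$. The lemma with $A'=(N(\ell)-a_2,1+a_1)$, translated by $n$, gives $\operatorname{Ratio}_{\mathrm{last}}(\ell[n])<m^{(k)}_{(N(\ell)+n,1)}/m^{(k)}_{(N(\ell)+n-a_2,1+a_1)}$, which tends to $S(a_1,a_2)_-$ by \eqref{eq:limits-}.
\item \emph{Lower bound.} Fix $N$ and $B=(N,1)$. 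Because $\overline{O'B}$ has positive slope, for $n$ large the segment $\overline{O\widetilde{\mathbf p}_2[n]}$ crosses it. The lemma then gives $m^{(k)}_{(N,1)}/m^{(k)}_{(N-a_2,1+a_1)}<\operatorname{Ratio}_{\mathrm{last}}(\ell[n])$ for all large $n$. Letting $N\to\infty$ yields $\liminf\ge S(a_1,a_2)_-$.
\end{itemize}
Part (2) is the mirror argument with Lemma~\ref{lem:ratio_comparison}(2), using $N(\ell)-1<\frac{b+1}{1-a}<N(\ell)$, $A'=(N(\ell)+1,N(\ell))$, $B'=(N(\ell)+1+a_2,N(\ell)-a_1)$, and comparison with $(N+1,N),(N+1+a_2,N-a_1)$ for the other side. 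No asymptotics beyond \eqref{eq:limits-} and \eqref{eq:limits+} are needed.
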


\begin{proof}
We begin with part (1). The fact that the sequence $\{\operatorname{Ratio}_{\mathrm{last}}(\ell[n]) \mid n \geq 0\}$ is strictly increasing follows immediately from Lemma~\ref{lem:ratio_comparison} (1). In particular, set $\mathbf{p}_1 = \widetilde{\mathbf{p}_1}[n],\mathbf{p}_2 = \widetilde{\mathbf{p}_2}[n], \mathbf{p}_1' = \widetilde{\mathbf{p}_1}[n+1]$, and $\mathbf{p}_2' = \widetilde{\mathbf{p}_2}[n+1]$.

    We now prove~\eqref{eq:ratiolim1}. 
    If $\widetilde{q}_2 = 1$, then~\eqref{eq:ratiolim1} coincides with~\eqref{eq:limits-}. 
    Suppose instead that $\widetilde{q}_2 > 1$. 
    Then $\frac{b-1}{a}\notin \mathbb Z$. 
    Let $N(\ell)$ be the unique positive integer satisfying 
    \[
        N(\ell) - 1 < -\frac{b-1}{a} < N(\ell),
    \]
    and define the points
    \[
        A = (N(\ell) - 1 - a_2,\; 1 + a_1), \quad B = (N(\ell) - 1,\; 1),
    \]
    \[
        A' = (N(\ell) - a_2,\; 1 + a_1), \quad B' = (N(\ell),\; 1).
    \]
Therefore, the lines $\ell$, $\overline{AB}$, and $\overline{A'B'}$ share the same slope, with $\ell$ lying between $\overline{AB}$ and $\overline{A'B'}$. We thus have $\widetilde p_2<N(\ell)$. We also have $1<\widetilde q_2 < 1 + a_1$; otherwise, the point $(\widetilde p_2 + a_2, q_2 - a_1)$ would belong to $\ell \cap \mathcal R$, contradicting the fact that $\widetilde{\mathbf{p}}_2$ is the rightmost point of $\ell \cap \mathcal R$.

Hence, $\widetilde{\mathbf{p}}_2$ lies in the interior of the parallelogram with vertices $A$, $A'$, $B$, and $B'$. As a result, the segments $\overline{OB'}$ and $\overline{O'\widetilde{\mathbf{p}}_2}$ intersect in their interiors, where $O = (0,0)$ and $O' = (a_2, -a_1)$. 

\begin{center}
\begin{tikzpicture}[scale = 0.75]
\draw[fill = black] (0,0) circle [radius=2pt];
\node[left] at (0,0){$O$};
\draw[fill = black] (3,-2) circle [radius=2pt];
\node[left] at (3,-2){$O'$};
\draw[fill = black] (6,1) circle [radius=2pt];
\node[left] at (6,1){$B$};
\draw[fill = black] (7,1) circle [radius=2pt];
\node[right] at (7,1){$B'$};
\draw[fill = black] (3,3) circle [radius=2pt];
\node[left] at (3,3){$A$};
\draw[fill = black] (4,3) circle [radius=2pt];
\node[right] at (4,3){$A'$};
\draw[fill = black] (5,2) circle [radius=2pt];
\node[right, yshift = 3pt] at (5,2){$\widetilde{\mathbf{p}}_2$};
\draw[fill = black] (2,4) circle [radius=2pt];
\node[right, yshift = 3pt] at (2,4){$\widetilde{\mathbf{p}}_1$};
\draw[dashed] (2,4) -- (8,0);
\node[below] at (8,0){$\ell$};
\end{tikzpicture}
\end{center}

 Since \( n(\ell) = |\ell \cap \mathcal R| \ge 2 \), we have \( \widetilde{\bf p}_1 = (\widetilde p_1, \widetilde q_1) \in \mathcal R \), and consequently  
\(
\widetilde p_1 = \widetilde p_2 - a_2 \geq \widetilde q_1 = \widetilde q_2 + a_1.
\)
Using \( \widetilde p_2 < N(\ell) \) and \( \widetilde q_2 > 1 \), we obtain  
\(
N(\ell) - a_2 > 1 + a_1.
\)
Hence, \( A', B' \in \mathcal R \). Moreover, using $p_2 = p_1 + a_2$ and $q_2 = q_1 - a_1$, we can see that the slopes of $\overline{{\bf p}_1 A'}$ and $\overline{{\bf p}_2 B'}$ agree. Therefore, by Lemma~\ref{lem:ratio_comparison}(1), we obtain
    \begin{equation*}
        \operatorname{Ratio}_{\mathrm{last}}(\ell)
        = \frac{m^{(k)}_{\widetilde{\mathbf{p}}_2}}{m^{(k)}_{\widetilde{\mathbf{p}}_1}}
        < \frac{m^{(k)}_{B'}}{m^{(k)}_{A'}}
        = \frac{m^{(k)}_{(N(\ell),\, 1)}}{m^{(k)}_{(N(\ell) - a_2,\, 1 + a_1)}}.
    \end{equation*}
    Translating everything except $O$ and $O'$ by $n$ units 
    horizontally, we get for all $n \geq 0$,
    \begin{equation*}
        \operatorname{Ratio}_{\mathrm{last}}(\ell[n])
        = \frac{m^{(k)}_{\widetilde{\mathbf{p}}_2[n]}}{m^{(k)}_{\widetilde{\mathbf{p}}_1[n]}}
        < \frac{m^{(k)}_{B'[n]}}{m^{(k)}_{A'[n]}}
        = \frac{m^{(k)}_{(N(\ell) + n,\, 1)}}{m^{(k)}_{(N(\ell) + n - a_2,\, 1 + a_1)}}.
    \end{equation*}

    Hence, the sequence $\{\operatorname{Ratio}_{\mathrm{last}}(\ell[n])\}$ is increasing and bounded above by a convergent sequence, and therefore the limit $\lim_{n \to \infty} \operatorname{Ratio}_{\mathrm{last}}(\ell[n])$ exists; denote it by $c_-$. 
    Moreover, from (\ref{eq:limits-}), we have $c_- \leq S(a_1, a_2)_-$.

    Conversely, fix any $N > 1+a_1+a_2$, and set $A = (N - a_2,\, 1 + a_1)$ and $B = (N,\, 1)$. 
    Recall we set $O' = (a_2,-a_1)$. Since the slope of $\overline{O'B}$ is positive, there exists $n > 0$ such that the segments $\overline{O'B}$  and $\overline{O\widetilde{\mathbf{p}}_2[n]}$ intersect in their interiors. 

\begin{center}
\begin{tikzpicture}[scale = 0.75]
\draw[fill = black] (0,0) circle [radius=2pt];
\node[left] at (0,0){$O$};
\draw[fill = black] (3,-2) circle [radius=2pt];
\node[left] at (3,-2){$O'$};
\draw[fill = black] (6,1) circle [radius=2pt];
\node[left] at (6,1){$B$};
\draw[fill = black] (18,2) circle [radius=2pt];
\node[right, yshift = 3pt] at (18,2){$\widetilde{\mathbf{p}}_2$};
\draw[fill = black] (3,3) circle [radius=2pt];
\node[left] at (3,3){$A$};
\draw[fill = black] (15,4) circle [radius=2pt];
\node[right, yshift = 3pt] at (15,4){$\widetilde{\mathbf{p}}_1$};
\draw(0,0) -- (18,2);
\draw(3,-2) -- (6,1);
\end{tikzpicture}
\end{center}

    Applying Lemma~\ref{lem:ratio_comparison}(1) again gives
    \begin{equation*}
        \frac{m^{(k)}_{(N,\, 1)}}{m^{(k)}_{(N - a_2,\, 1 + a_1)}}
        = \frac{m^{(k)}_B}{m^{(k)}_A}
        < \frac{m^{(k)}_{\widetilde{\mathbf{p}}_2[n]}}{m^{(k)}_{\widetilde{\mathbf{p}}_1[n]}}
        = \operatorname{Ratio}_{\mathrm{last}}(\ell[n]).
    \end{equation*}
    Taking the limit inferior as $n \to \infty$, we deduce $S(a_1, a_2)_- \leq c_-$.

    Combining both inequalities, we conclude that
    \[
        \lim_{n \to \infty} \operatorname{Ratio}_{\mathrm{last}}(\ell[n]) = c_- = S(a_1, a_2)_-.
    \]

Next we consider part (2). Showing that the sequence $\{\operatorname{Ratio}_{\mathrm{first}}(\ell\langle n\rangle) \mid n \geq 0\}$ is strictly decreasing again follows directly from Lemma~\ref{lem:ratio_comparison}(2).  In particular, set $\mathbf{p}_1 = \mathbf{p}_1\langle n\rangle,\mathbf{p}_2 = \mathbf{p}_2\langle n\rangle, \mathbf{p}_1' = \mathbf{p}_1\langle n+1 \rangle$, and $\mathbf{p}_2' = \mathbf{p}_2\langle n+1\rangle$. 

We now turn to the proof of~\eqref{eq:ratiolim2}.  
If ${\mathbf p}_1 = (n+1,n)$ for some $n$, then~\eqref{eq:ratiolim2} reduces to~\eqref{eq:limits+}.  
Suppose instead that ${\mathbf p}_1 \neq (n+1,n)$ for any $n$; in this case, $\frac{b+1}{1-a}\notin \mathbb Z$. Let $N(\ell)$ be the unique positive integer such that
\[
N(\ell) - 1 < \frac{b+1}{1-a} < N(\ell),
\]
and define the points
\[
A = (N(\ell), \, N(\ell)-1),\quad B = (N(\ell) + a_2,\; N(\ell)-1-a_1),
\]
\[
A' = (N(\ell)+1, \; N(\ell)),\quad B' = (N(\ell) +1 + a_2,\; N(\ell)-a_1).
\]
Thus, the lines $\ell$, $\overline{AB}$, and $\overline{A'B'}$ all have the same slope, with $\ell$ lying between $\overline{AB}$ and $\overline{A'B'}$. Consequently, the point ${\mathbf{p}}_1$ lies inside the parallelogram with vertices $A, A', B, B'$. Moreover, because $\overline{O'B'}$ crosses $\overline{AB}$ and
$\overrightarrow{AB} = \overrightarrow{\mathbf p_1 \mathbf p_2}$
, the segments $\overline{O{\mathbf{p}}_2}$ and $\overline{O'B'}$ intersect in their interiors, where $O = (0,0)$ and $O' = (a_2, -a_1)$. As in (1), since $n(\ell) = |\ell \cap \mathcal R| \geq 2$, we have $A', B' \in \mathcal R$.

\begin{center}
\begin{tikzpicture}[scale = 0.75]
\draw[fill = black] (0,0) circle [radius=2pt];
\node[left] at (0,0){$O$};
\draw[fill = black] (2,-2) circle [radius=2pt];
\node[left] at (2,-2){$O'$};
\draw[fill = black] (6,1) circle [radius=2pt];
\node[left] at (6,1){$B$};
\draw[fill = black] (7,2) circle [radius=2pt];
\node[right] at (7,2){$B'$};
\draw[fill = black] (4,3) circle [radius=2pt];
\node[left] at (4,3){$A$};
\draw[fill = black] (5,4) circle [radius=2pt];
\node[right] at (5,4){$A'$};
\draw[fill = black] (5,3) circle [radius=2pt];
\node[right, yshift = 3pt] at (5,3){$\mathbf{p}_1$};
\draw[fill = black] (7,1) circle [radius=2pt];
\node[right, yshift = 3pt] at (7,1){$\mathbf{p}_2$};
\draw[dashed] (4,4) -- (8,0);
\node[below] at (8,0){$\ell$};
\end{tikzpicture}
\end{center}

Applying Lemma~\ref{lem:ratio_comparison}(2), we obtain
\begin{equation*}
\operatorname{Ratio}_{\mathrm{first}}(\ell)
= \frac{m^{(k)}_{{\mathbf{p}}_2}}{m^{(k)}_{{\mathbf{p}}_1}}
> \frac{m^{(k)}_{B'}}{m^{(k)}_{A'}}
= \frac{m^{(k)}_{(N(\ell) +1 + a_2,\; N(\ell)-a_1)}}{m^{(k)}_{(N(\ell)+1,\; N(\ell))}}.
\end{equation*}
Translating by $n$ units up and to the right yields, for all $n \geq 0$,
\begin{equation*}
\operatorname{Ratio}_{\mathrm{first}}(\ell\langle n\rangle)
= \frac{m^{(k)}_{{\mathbf{p}}_2\langle n\rangle}}{m^{(k)}_{{\mathbf{p}}_1\langle n\rangle}}
> \frac{m^{(k)}_{B'\langle n\rangle}}{m^{(k)}_{A'\langle n\rangle}}
= \frac{m^{(k)}_{(N(\ell) +1+n + a_2,\; N(\ell)+n - a_1)}}{m^{(k)}_{(N(\ell)+n+1,\; N(\ell)+n)}}.
\end{equation*}

    Hence, the limit $\lim_{n \to \infty} \operatorname{Ratio}_{\mathrm{first}}(\ell\langle n\rangle)$ exists; denote it by $c_+$. 
    Moreover, passing to the limit yields $c_+ \geq S(a_1, a_2)_+$.

    Conversely, fix any $N > a_1$, and set $A = (N+1,\, N)$ and $B = (N+1+a_2,\, N-a_1)$. 
    Since the slope of $\overline{OB}$ is positive, there exists $n > 0$ such that the segments $\overline{OB}$ and $\overline{O'{\mathbf{p}}_2\langle n\rangle}$ intersect in their interiors. 
    Applying Lemma~\ref{lem:ratio_comparison}(2) again gives
    \begin{equation*}
        \frac{m^{(k)}_{(N+1+a_2,\, N-a_1)}}{m^{(k)}_{(N+1,\, N)}}
        = \frac{m^{(k)}_B}{m^{(k)}_A}
        > \frac{m^{(k)}_{{\mathbf{p}}_2\langle n\rangle}}{m^{(k)}_{{\mathbf{p}}_1\langle n\rangle}}
        = \operatorname{Ratio}_{\mathrm{first}}(\ell\langle n\rangle).
    \end{equation*}
    Taking the limit inferior as $n \to \infty$, we deduce $S(a_1, a_2)_+ \geq c_-$.

    Combining both inequalities, we conclude that
    \[
       \lim_{n\to \infty} \operatorname{Ratio}_{\mathrm{first}}(\ell\langle n\rangle)=S_+(a_1,a_2). 
    \]
    The proof is complete.
\end{proof}

We now have collected all the tools needed for our main result, concerning the behavior of generalized $k$-Markov numbers along lines of various slopes.

\begin{theorem}\label{thm:Main}
Let $\ell:y=ax+b$ be a line with rational slope and intercept. 
Let $\alpha = 4k^2 + 12k + 5$, $\beta= 3k^2 + 8k + 6$ and $\delta= 3k^4 + 17k^3 + 34 k^2 + 28k + 8$ as in Proposition ~\ref{prop:kMarkovFibAndPell} and let $A= \frac{2k^2 + 3k + 1 + (1+k)\sqrt{\alpha}}{2(1+2k)\sqrt{\alpha}}$, $B=\frac{(k+1)(k+2)(\beta^2-4)+\delta\sqrt{\beta^2-4}}{(\beta-2)(\beta^2-4)}$. Then the following hold:
\begin{enumerate}
    \item[$(1)$] If $S_+(a_1,a_2)\geq 1$, or equivalently $a \geq -\frac{\ln(3+3k)B}{\ln\frac{\beta+\sqrt{\beta^2-4}}{2(3+3k)B}}$
    then the generalized $k$-Markov numbers are strictly increasing as functions of $x$ along $\ell\cap \mathcal R$. 

    \item[$(2)$] If $S_-(a_1,a_2)\leq 1$, or equivalently $a\leq -\frac{\ln\frac{3+2k+\sqrt{\alpha}}{2}}{\ln(3+3k)A}$,
    then the generalized $k$-Markov numbers are strictly decreasing as functions of $x$ along $\ell\cap \mathcal R$. 

    \item[$(3)$] If $S_+(a_1,a_2)<1$ and $S_-(a_1,a_2)>1$, or equivalently $-\frac{\ln\frac{3+2k+\sqrt{\alpha}}{2}}{\ln(3+3k)A} < a < \frac{\ln(3+3k)B}{\ln\frac{\beta+\sqrt{\beta^2-4}}{2(3+3k)B}}$, then for any 
    $b\in \mathbb Q$ such that $|\ell\cap \mathcal R|>2$, then
    the generalized $k$-Markov numbers are neither monotone increasing nor monotone decreasing in $x$ along $\ell\cap \mathcal R$. More precisely, they first decrease and then increase as $x$ grows.
\end{enumerate}
\end{theorem}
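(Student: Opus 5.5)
The plan is to reduce everything to the first and last ratios via Proposition~\ref{prop:negative_slope}, and then to compare those ratios with the limits $S_\pm(a_1,a_2)$ using the monotonicity statements of Proposition~\ref{prop:firstlastratios}. First I dispose of easy cases. If $a\ge 0$ (including horizontal lines), Proposition~\ref{prop:monotonicity_hv} and Corollary~\ref{cor:positiveslope} already give strict increase; one checks that the threshold in (1) is negative, so these lines fall under (1). Vertical lines are covered by Proposition~\ref{prop:monotonicity_hv}. If $n(\ell)\le 1$ there is nothing to prove. So assume $a=-a_1/a_2<0$ and $n(\ell)\ge 2$.

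For (1), I want $\operatorname{Ratio}_{\mathrm{first}}(\ell)>1$. Since $\ell=\ell\langle 0\rangle$, Proposition~\ref{prop:firstlastratios}(2) says $\operatorname{Ratio}_{\mathrm{first}}(\ell\langle n\rangle)$ strictly decreases to $S_+(a_1,a_2)$. Hence
\[
\operatorname{Ratio}_{\mathrm{first}}(\ell)>S_+(a_1,a_2)\ge 1,
\]
and Proposition~\ref{prop:negative_slope} gives strict increase. Symmetrically for (2), Proposition~\ref{prop:firstlastratios}(1) gives
\[
\operatorname{Ratio}_{\mathrm{last}}(\ell)<S_-(a_1,a_2)\le 1,
\]
hence strict decrease. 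For (3), the strictness is not automatic for the given $\ell$, since $S_+<1$ does not by itself force $\operatorname{Ratio}_{\mathrm{first}}(\ell)<1$. So I would use the full strength of Lemma~\ref{lem:ratio_comparison}: bound $\operatorname{Ratio}_{\mathrm{first}}(\ell)$ from above by a ratio of the form $m_{(N+1+a_2,N-a_1)}/m_{(N+1,N)}$ with $N$ chosen as in the proof of Proposition~\ref{prop:firstlastratios}(2). That proof shows
\[
\operatorname{Ratio}_{\mathrm{first}}(\ell)>\frac{m^{(k)}_{B'}}{m^{(k)}_{A'}},
\]
which points the wrong way, so I instead need the converse comparison with the lower-left parallelogram vertices $A,B$, together with the fact that the sequence in \eqref{eq:limits+} is increasing with limit $S_+$. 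The point is to have $\operatorname{Ratio}_{\mathrm{first}}(\ell)\le m_B/m_A<S_+<1$. I expect this geometric placement, that $\overline{O\mathbf p_2}$ and $\overline{O'B}$ intersect appropriately, to be the main obstacle. The same device with the parallelogram $A,A',B,B'$ near $y=1$ gives $\operatorname{Ratio}_{\mathrm{last}}(\ell)>S_->1$. Since $n(\ell)>2$ and the ratios strictly increase by Proposition~\ref{prop:negative_slope}, the sequence first decreases and then increases.

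Finally, I translate the conditions into slope inequalities using the lemma computing $S_\pm$. Taking logarithms, $S_+(a_1,a_2)\ge1$ becomes
\[
(a_1+a_2)\ln\big((3+3k)B\big)\ge a_1\ln\tfrac{\beta+\sqrt{\beta^2-4}}{2}.
\]
This is $a_2\ln((3+3k)B)\ge a_1\ln\frac{\beta+\sqrt{\beta^2-4}}{2(3+3k)B}$, which gives the stated bound on $a=-a_1/a_2$ once I check the sign $\ln\frac{\beta+\sqrt{\beta^2-4}}{2(3+3k)B}>0$. Similarly, $S_-\le1$ becomes
\[
a_2\ln\tfrac{3+2k+\sqrt\alpha}{2}\le a_1\ln\big((3+3k)A\big),
\]
using $(3+3k)A>1$. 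I would verify these sign facts by direct estimates on $A$ and $B$; that is routine but required.
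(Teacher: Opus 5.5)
Your treatment of (1) and (2), and your conversion of $S_\pm\gtrless 1$ into the slope inequalities, follow the paper's route (Propositions~\ref{prop:negative_slope} and~\ref{prop:firstlastratios}) and are fine. You were also right that (3) is not automatic. But your proposed repair cannot work.

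The chain $\operatorname{Ratio}_{\mathrm{first}}(\ell)\le m^{(k)}_B/m^{(k)}_A<S_+(a_1,a_2)$ is impossible. The sequence in \eqref{eq:limits+} is strictly \emph{decreasing}: this is exactly what Lemma~\ref{lem:ratio_comparison}(2) gives, and the word ``increasing'' just before \eqref{eq:limits+} is a slip. For example, for slope $-1$ and $k=0$ the terms are $\frac{34}{29},\frac{194}{169},\frac{1120}{985},\dots\downarrow\frac98$. So every quotient $m^{(k)}_B/m^{(k)}_A$ of that form exceeds $S_+$. More fundamentally, the fact you use in (1) is that $\operatorname{Ratio}_{\mathrm{first}}(\ell\langle n\rangle)$ strictly decreases to $S_+$, with $\ell=\ell\langle 0\rangle$. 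This already forces $\operatorname{Ratio}_{\mathrm{first}}(\ell)>S_+$ for every line. Symmetrically, $\operatorname{Ratio}_{\mathrm{last}}(\ell)<S_-$ for every line. So neither ``$\operatorname{Ratio}_{\mathrm{first}}(\ell)<S_+$'' nor ``$\operatorname{Ratio}_{\mathrm{last}}(\ell)>S_-$'' can ever be established. The lower-left comparison, done correctly, only gives $S_+<\operatorname{Ratio}_{\mathrm{first}}(\ell)<m^{(k)}_B/m^{(k)}_A$, and this is below $1$ only once the line is far from the origin. What these tools do prove is an eventual version. Since $\ell\langle a_1t\rangle=\ell[(a_1+a_2)t]$, for $t$ large this single line has first ratio $<1$ and last ratio $>1$, so along it the numbers first decrease and then increase.

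In fact (3), as stated for every $b$ with $|\ell\cap\mathcal R|>2$, is false, so no argument can supply it. Take $k=0$ and slope $-\frac76$. This slope lies between $L(0)\approx-1.2416$ and $U(0)\approx-1.1432$, and indeed $S_+(7,6)\approx 0.89<1<S_-(7,6)\approx 1.42$. Take $b=\frac{101}{3}$. Then $\ell\cap\mathcal R=\{(16,15),(22,8),(28,1)\}$, and:
\begin{itemize}
\item $m^{(0)}_{(16,15)}=m_{\frac{15}{16}}=259717522849$;
\item $m^{(0)}_{(22,8)}=3\,(m_{\frac{4}{11}})^2=3\cdot 294685^2=260517747675$, by Proposition~\ref{prop:RecurrenceAlongy=q/px};
\item $m^{(0)}_{(28,1)}=m_{\frac{1}{28}}=365435296162$.
\end{itemize}
This sequence is strictly increasing. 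The next translate $\ell\langle 1\rangle$ already dips, since $m_{\frac{9}{23}}=1490542435045<m_{\frac{16}{17}}=1513744654945$, as Proposition~\ref{prop:firstlastratios}(2) predicts.

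The paper's one-line proof of (3) has the same defect. Propositions~\ref{prop:negative_slope} and~\ref{prop:firstlastratios} supply only the one-sided bounds $\operatorname{Ratio}_{\mathrm{first}}(\ell)>S_+$ and $\operatorname{Ratio}_{\mathrm{last}}(\ell)<S_-$. These settle (1) and (2), but say nothing about a particular short line in the middle range of slopes.
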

    
\begin{proof}
If $a\geq 0$, then the result is a direct consequence of Proposition ~\ref{prop:monotonicity_hv} and Corollary~\ref{cor:positiveslope}.

If  $a<0$, then the assertion follows immediately from Propositions~\ref{prop:negative_slope} and~\ref{prop:firstlastratios}.
\end{proof}

\subsection{Concluding Discussion}

Consider the functions $U(k):= -\frac{\ln(3+3k)B}{\ln\frac{\beta+\sqrt{\beta^2-4}}{2(3+3k)B}}$ and $L(k) =  -\frac{\ln\frac{3+2k+\sqrt{\alpha}}{2}}{\ln(3+3k)A}$, with $A,B,\alpha,\beta,$ and $\delta$ as defined previously. Let $(p,q), (s,t) \in \mathcal{R}$ be such that $p < s$. Let $\ell: ax + b$ be the line on which these two points lie. Theorem~\ref{thm:Main} tells us that, if $a \geq U(k)$, then $m^{(k)}_{(p,q)} < m^{(k)}_{(s,t)}$ and if $a \leq L(k)$, then $m^{(k)}_{(p,q)} < m^{(k)}_{(s,t)}$. We provide a few representative values of $U(k)$ and $L(k)$ below.

\begin{center}
\begin{tabular}{|c|c|c|c|c|c|c|c|}\hline
 $k$    & 0 & 1 & 2 & 3 & 100 & 1000 & 10,000 \\\hline
 $U(k)$   & -1.14320 & -1.14623 & -1.14320 & -1.13485 & -1.05725 & -1.03933 & -1.029917\\\hline
 $L(k)$ & -1.24167 & -1.30922 & -1.24167& -1.20123& -1.05827 & -1.03940 & -1.029923\\ \hline
\end{tabular}
\end{center}

This data reflects general properties of these functions. Both functions are increasing for $k \geq 1$ and both converge to $-1$. Therefore, the ``constant sum'' ordering is a sort of extreme instance of a monotonically increasing line. The fact that the two functions converge to the same value implies the difference between their values, $U(k) - L(k)$, converges to 0. Therefore, as $k$ increases past $1$, the $k$-Markov numbers are more likely to grow monotonically along an arbitrary line. 

This behavior makes a $k$-version of the Uniqueness Conjecture even more likely to be true than the ordinary version. For example, in \cite[Corollary 1.5]{LLRS}, Lee, Li, Rabideau, and Schiffler give an upper bound for the number of generalized Markov numbers $m_{(p',q')}$ which could be equal to a fixed number $m_{(p,q)}$. This is a function of $p$ and $q$ and comes from estimating the number of integral points in $\mathcal{R}$ which lie in a wedge with vertex $(p,q)$ and bounded by lines of slopes $-\frac54$ and $-\frac87$, close approximations to $L(0)$ and $U(0)$. Therefore, for any fixed Markov number, one can compute its uniqueness with a finite calculation. 

We can repeat these calculations for general $L(k)$ and $U(k)$, and this possible number of integral points will decrease. For example, from \cite[Corollary 1.5]{LLRS}, there are at most 832 Markov numbers equal to $m_{\frac{1009}{9973}}$, but the number of $10000$-Markov numbers which could be equal to  $m^{(10000)}_{\frac{1009}{9973}}$ is at most 23. 

Recall one way to prove the Uniqueness Conjecture would be to show that the partial order $(\mathbb{Q} \cap [0,1],\prec)$, given by $\fqp \prec \frac{q'}{p'}$ whenever $m_{\fqp} < m_{\frac{q'}{p'}}$, is a total order. Showing the generalized conjecture in \cite{LLRS} would entail exhibiting such a total order on all points in $\mathcal{R}$. 

Gyoda and Maruyama conjectured that the Uniqueness Conjecture remains true for $k$-Markov numbers in \cite[Conjecture 1.8] {gyoda2023uniqueness}. Accordingly, one can consider the partial order  $(\mathbb{Q} \cap [0,1],\prec^{(k)})$ given by $\fqp \prec^{(k)} \frac{q'}{p'}$ whenever $m^{(k)}_{\fqp} < m^{(k)}_{\frac{q'}{p'}}$. The first author asked whether the partial orders $\prec^{(k)}$ were the same for varying $k$ in \cite[Question 52]{banaian2025}. For instance, by \cite[Theorem 2.9]{gyoda2023uniqueness},  $m^{(2)}_{\fqp} = (m^{(0)}_{\fqp})^2$, so it is immediate that $\prec^{(0)}$ and $\prec^{(2)}$ coincide. However, the results provided here show that the answer to this Question in general is no. We end with a few examples illustrating some differences. 

\begin{example}
An interesting phenomenon for the functions $U(k)$ and $L(k)$ is that, when considering positive integral inputs, each is minimized at $k = 1$.Indeed, we see that the gray area $U(k)-L(k)$ is largest for $k =1$, so this is a natural place to search for examples of how the orderings can differ. 

For example, consider the two pairs of coprime numbers $(29,6)$ and $(25,11)$. The slope connecting these two points is $-\frac54$. Since $-\frac54 < L(0)$, we know $m^{(0)}_{\frac{11}{25}} > m^{(0)}_{\frac{6}{29}}$. (This inequality was already known by \cite[Theorem 1.2]{LLRS}.) Indeed, one can calculate \[
m^{(0)}_{\frac{11}{25}}  =  48795987025021 
\]
and
\[
m^{(0)}_{\frac{6}{29}}  =  46127828641049 
\]

However, $L(1) < -\frac54 < U(1)$, so we do not have the same guarantee for the associated 1-Markov numbers. And when we compute these, we find $m^{(1)}_{\frac{11}{25}} < m^{(1)}_{\frac{6}{29}}$:\[
m^{(1)}_{\frac{11}{25}}  =  9998020960587781820161
\]
and
\[
m^{(1)}_{\frac{6}{29}}  =  11854846326279367099921 .
\]
\end{example}

\begin{example}
For another example, consider the coprime pairs $(8,7)$ and $(13,1)$.  The slope of the line connecting these two points is $-\frac65$, which lies in the the gray zone for the first few values of $k$. The numbers $m^{(0)}_{\frac78}$ and $m^{(0)}_{\frac{1}{13}}$ are Pell and Fibonacci numbers respectively and one can calculate $m^{(0)}_{\frac78} = 195025$ and $m^{(0)}_{\frac{1}{13}} = 196418$. In particular, $m^{(0)}_{\frac78} < m^{(0)}_{\frac{1}{13}}$. Using Proposition~\ref{prop:kMarkovFibAndPell}, one can check again $m^{(1)}_{\frac78} < m^{(1)}_{\frac{1}{13}}$ and $m^{(2)}_{\frac78} < m^{(2)}_{\frac{1}{13}}$. The latter is indeed immediate from \cite[Theorem 2.9]{gyoda2023uniqueness}. However, at $k = 3$, the behavior swaps. We have \[
m^{(3)}_{\frac78} = 1188752792899
\]
and
\[
m^{(3)}_{\frac{1}{13}} = 1108609632005
\]
so that $m^{(2)}_{\frac78} > m^{(2)}_{\frac{1}{13}}$.
\end{example}

Based on this data and these investigations, we conclude with some items to consider in the future. We begin by conjecturing a strengthening of \cite[Conjecture 1.8] {gyoda2023uniqueness}.

\begin{conjecture}
 For all $(p,q),(p',q') \in \mathcal{R}$ and for all $k \geq 0$, $m^{(k)}_{(p,q)} = m^{(k)}_{(p',q')}$ implies $(p,q)=(p',q')$.   
\end{conjecture}

Next, based on the behavior of $U(k)$ and $L(k)$ as $k$ grows, we ask about the following relationship amongst the behavior of the orderings $\prec^{(k)}$ in their respective ``gray zones.''

\begin{quest}\label{quest:OrderingRelationship}
Do there exist $(p,q),(p',q') \in \mathcal{R}$ with $p<p'$ and $1 < k_1 < k_2$ such that (1) $L(k_i) < \frac{q'-q}{p'-p} < U(k_i)$ for $i = 1,2$, (2) $m^{(k_1)}_{(p,q)} > m^{(k_1)}_{(p',q')}$, and (3) $m^{(k_2)}_{(p,q)} < m^{(k_2)}_{(p',q')}$?
\end{quest}

 Question~\ref{quest:OrderingRelationship} asks about how the orderings in the intersection of the gray zone of two values of $k$ can compare. We weakly expect that no such points can exist, since, if $L(k_2) < \frac{q'-q}{p'-p} < U(k_2)$, then there exists some $k_3 > k_2$ such that $ \frac{q'-q}{p'-p}  < L(k)$ for all $k \geq k_3$. That is, we expect that, once we have $m^{(k)}_{(p,q)} > m^{(k)}_{(p',q')}$, then for all larger $k' > k$, $m^{(k')}_{(p,q)} > m^{(k')}_{(p',q')}$.

\bibliographystyle{abbrv}
\bibliography{bibliography}

\end{document}